\newtheorem{theorem}{Theorem}[section]																
\newtheorem{proposition}[theorem]{Proposition}
\newtheorem{lemma}[theorem]{Lemma}
\newtheorem{corollary}[theorem]{Corollary}
\newcommand{\E}{\operatorname{\mathbb{E}}}
\renewcommand{\P}{\operatorname{\mathbb{P}}}
\def\eps{\varepsilon}
\def\a{\alpha}
\def\be{\beta}
\def\ga{\gamma}
\def\part{\partial}
\def\bA{A^*}
\def\var{\mathrm{var\/}}
\def\bv{\bar{v}}
\def\b1{\bold 1}
\newcommand{\beq}{\begin{equation}}
\newcommand{\eeq}{\end{equation}}
\newtheorem{Theorem}{Theorem}[section]
\newtheorem{Lemma}[Theorem]{Lemma}
\newtheorem{Proposition}[Theorem]{Proposition}
\newcommand{\TT}{\mathcal{T}}
\theoremstyle{remark}
\numberwithin{equation}{section}
    \newenvironment{dedication}
        {\vspace{6ex}\begin{quotation}\begin{center}\begin{em}}
        {\par\end{em}\end{center}\end{quotation}}
\date{\today}
\begin{document}

\title[Beta-splitting Random Tree]{The Critical Beta-splitting Random Tree: Heights and Related Results} 
\date{}
\author{David Aldous and Boris Pittel}
 \begin{dedication}
\hspace{0cm}
Dedicated to Katy and Monique.
\end{dedication}

\address{Department of Statistics, U.C. Berkeley, 367 Evans Hall, 3860, Berkeley CA 94720}
\email{aldous@stat.berkeley.edu}
\address{Department of Mathematics, Ohio State University, 231 West 18-th Avenue, Columbus OH 43210}
\email{pittel.1@osu.edu}
\begin{abstract} In the critical beta-splitting model of a random $n$-leaf binary tree, leaf-sets are recursively split into subsets, and a set of $m$ leaves is split into subsets containing $i$ and $m-i$ leaves with probabilities proportional to $1/{i(m-i)}$. 
We study the continuous-time model in which the holding time before that split is exponential with rate $h_{m-1}$, the harmonic number.
We (sharply) evaluate the first two moments of the time-height $D_n$ and of the edge-height $L_n$ of a uniform random leaf
(that is, the length of the path from the root to the leaf), and prove the corresponding CLTs.
We study the correlation between the heights of two random leaves of the same tree realization,
and analyze the expected number of splits necessary for a set of $t$ leaves to partially or completely break away from each other.
We give tail bounds for the time-height and the edge-height of the {\em tree}, that is the maximal leaf heights.
We show that there is a limit distribution for the size of a uniform random subtree, and  derive the asymptotics of the mean size.
Our proofs are based on  asymptotic analysis of the attendant (sum-type) recurrences. 
The essential idea is to replace such a recursive equality by a pair of recursive inequalities for which matching asymptotic solutions can be found, allowing one to bound, both ways, the elusive explicit solution of the recursive equality. This reliance on recursive inequalities necessitates usage of Laplace transforms rather than Fourier characteristic functions.
\end{abstract}
\keywords
{Markov chain, phylogenetic tree, random tree, recurrence}

\subjclass{60C05; 05C05, 92B10}

\maketitle
\section{Introduction} 

The topic of this paper is a certain random tree model, described below, introduced and discussed  briefly in 1996 in \cite{Ald1} but not subsequently 
studied. 
There is a slight ``applied" motivation as a toy model of phylogenetic trees (see  section \ref{sec:motiv}), 
but our purpose here is to commence  a  theoretical study of the model.
A key point is that it has qualitatively different properties from those of the well-studied random tree models that can be found in (for instance) 
\cite{drmota, janson}.

One fundamental question about a random tree concerns the height of leaves.  
It turns out that this question, and many extensions, can be answered extremely sharply via an analysis of recursions, 
and this paper gives a thorough and detailed account of the range of questions that can be answered by this methodology.

In addressing the Applied Probability community, let us observe that there are also other questions about the model that can be studied via a wide range of 
general modern probability techniques.  Some such work in progress is briefly described in a final
section \ref{sec:3}, and a current overview can be found in the preprint  \cite{Ald2}.

\subsection{The model}

For $m \ge 2$, consider the distribution $(q(m,i),\ 1 \le i \le m-1)$
constructed to be proportional to $\frac{1}{i(m-i)}$.
 Explicitly (by writing $\tfrac{1}{i(m-i)}=\bigl(\tfrac{1}{i}+\tfrac{1}{m-i}\bigr)/m$)
\begin{equation}\label{01}
q(m,i)=\tfrac{m}{2h_{m-1}}\cdot\tfrac{1}{i(m-i)},\,\,1\le i\le m-1,
\end{equation}
where $h_{m-1}$ is the harmonic sum $\sum_{i=1}^{m-1}1/i$. 
Now fix $n \ge 2$.
Consider the process of constructing a random tree by recursively splitting the integer interval 
$[n] = \{1,2,\ldots,n\}$ of ``leaves" as follows.
First specify that there is a left edge and a right edge at the root,
 leading to a left subtree which will have the $L_n$ leaves $\{1,\ldots,L_n\}$
 and a right subtree which will have the $R_n = n - L_n$ leaves $\{L_n + 1,\ldots, n\}$, where $L_n$ 
 (and also $R_n$, by symmetry) has distribution $q(n,\cdot)$. 
 Recursively, a subinterval with $m\ge 2$ leaves is split into two subintervals of random size
  from the distribution $q(m,\cdot)$. 
  Continue until reaching intervals of size $1$, which are the leaves.
This process has a natural tree structure, 
illustrated schematically\footnote{Actual simulations appear in \cite{Ald2}.} in Figure \ref{Fig:1}.
In this discrete-time construction we regard the edges of the tree as having length $1$.
It turns out (see section \ref{sec:motiv})
to be convenient to consider the continuous-time construction in which 
a size-$m$ interval is split at rate $h_{m-1}$, that is after an Exponential$(h_{m-1})$ {\em holding time}.
Once constructed, it is natural to identify ``time" with ``distance": a leaf that appears at time $t$ has
{\em time-height} $t$.
Of course the discrete-time model is implicit within the continuous-time model, and a leaf which appears after 
$\ell$ splits has {\em edge-height} $\ell$.

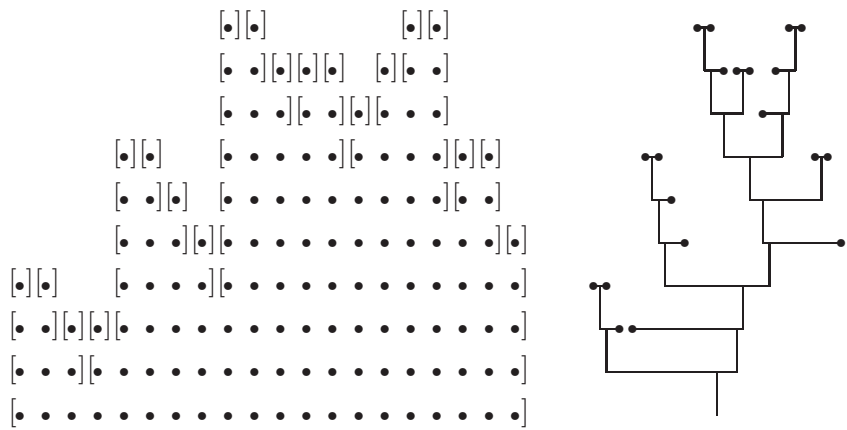
\begin{figure}[ht]
\setlength{\unitlength}{0.045in}
\begin{picture}(100,50)
\multiput(0,0)(3,0){20}{\circle*{0.9}}
\put(-1.3,-0.5){[}
\put(57.7,-0.5){]}
\multiput(0,5)(3,0){20}{\circle*{0.9}}
\put(-1.3,4.5){[}
\put(6.7,4.5){]}
\put(7.7,4.5){[}
\put(57.7,4.5){]}
\multiput(0,10)(3,0){20}{\circle*{0.9}}
\put(-1.3,9.5){[}
\put(3.7,9.5){]}
\put(4.7,9.5){[}
\put(6.7,9.5){]}
\put(7.7,9.5){[}
\put(9.7,9.5){]}
\put(10.7,9.5){[}
\put(57.7,9.5){]}
\multiput(0,15)(3,0){2}{\circle*{0.9}}
\put(-1.3,14.5){[}
\put(0.7,14.5){]}
\put(1.7,14.5){[}
\put(3.7,14.5){]}
\multiput(12,15)(3,0){16}{\circle*{0.9}}
\put(10.7,14.5){[}
\put(21.7,14.5){]}
\put(22.7,14.5){[}
\put(57.7,14.5){]}
\multiput(12,20)(3,0){16}{\circle*{0.9}}
\put(10.7,19.5){[}
\put(18.7,19.5){]}
\put(19.7,19.5){[}
\put(21.7,19.5){]}
\put(22.7,19.5){[}
\put(54.7,19.5){]}
\put(55.7,19.5){[}
\put(57.7,19.5){]}
\multiput(12,25)(3,0){3}{\circle*{0.9}}
\put(10.7,24.5){[}
\put(15.7,24.5){]}
\put(16.7,24.5){[}
\put(18.7,24.5){]}
\multiput(24,25)(3,0){11}{\circle*{0.9}}
\put(22.7,24.5){[}
\put(48.7,24.5){]}
\put(49.7,24.5){[}
\put(54.7,24.5){]}
\multiput(12,30)(3,0){2}{\circle*{0.9}}
\put(10.7,29.5){[}
\put(12.7,29.5){]}
\put(13.7,29.5){[}
\put(15.7,29.5){]}
\multiput(24,30)(3,0){11}{\circle*{0.9}}
\put(22.7,29.5){[}
\put(36.7,29.5){]}
\put(37.7,29.5){[}
\put(48.7,29.5){]}
\put(49.7,29.5){[}
\put(51.7,29.5){]}
\put(52.7,29.5){[}
\put(54.7,29.5){]}
\multiput(24,35)(3,0){9}{\circle*{0.9}}
\put(22.7,34.5){[}
\put(30.7,34.5){]}
\put(31.7,34.5){[}
\put(36.7,34.5){]}
\put(37.7,34.5){[}
\put(39.7,34.5){]}
\put(40.7,34.5){[}
\put(48.7,34.5){]}
\multiput(24,40)(3,0){5}{\circle*{0.9}}
\put(22.7,39.5){[}
\put(27.7,39.5){]}
\put(28.7,39.5){[}
\put(30.7,39.5){]}
\put(31.7,39.5){[}
\put(33.7,39.5){]}
\put(34.7,39.5){[}
\put(36.7,39.5){]}
\multiput(42,40)(3,0){3}{\circle*{0.9}}
\put(40.7,39.5){[}
\put(42.7,39.5){]}
\put(43.7,39.5){[}
\put(48.7,39.5){]}
\multiput(24,45)(3,0){2}{\circle*{0.9}}
\put(22.7,44.5){[}
\put(24.7,44.5){]}
\put(25.7,44.5){[}
\put(27.7,44.5){]}
\multiput(45,45)(3,0){2}{\circle*{0.9}}
\put(43.7,44.5){[}
\put(45.7,44.5){]}
\put(46.7,44.5){[}
\put(48.7,44.5){]}
\put(66,15){\line(1,0){1.5}}
\put(66.75,15){\line(0,-1){5}}
\put(66.75,10){\line(1,0){2.25}}
\put(67.5,10){\line(0,-1){5}}
\put(82.5,10){\line(0,-1){5}}
\put(67.5,5){\line(1,0){15}}
\put(83.25,15){\line(0,-1){5}}
\put(70.5,10){\line(1,0){12.75}}
\put(74.25,20){\line(0,-1){5}}
\put(86.25,20){\line(0,-1){5}}
\put(74.25,15){\line(1,0){12}}
\put(73.5,25){\line(0,-1){5}}
\put(73.5,20){\line(1,0){3}}
\put(85.5,25){\line(0,-1){5}}
\put(85.5,20){\line(1,0){9}}
\put(72.75,30){\line(0,-1){5}}
\put(72.75,25){\line(1,0){2.5}}
\put(72,30){\line(1,0){1.5}}
\put(84,30){\line(0,-1){5}}
\put(91.5,30){\line(1,0){1.5}}
\put(92.25,30){\line(0,-1){5}}
\put(84,25){\line(1,0){8.25}}
\put(81,35){\line(0,-1){5}}
\put(87.75,35){\line(0,-1){5}}
\put(81,30){\line(1,0){6.75}}
\put(79.5,40){\line(0,-1){5}}
\put(83.25,40){\line(0,-1){5}}
\put(79.5,35){\line(1,0){3.75}}
\put(88.5,40){\line(0,-1){5}}
\put(85.5,35){\line(1,0){3}}
\put(78.75,45){\line(0,-1){5}}
\put(78.75,40){\line(1,0){2.25}}
\put(82.5,40){\line(1,0){1.5}}
\put(89.25,45){\line(0,-1){5}}
\put(87,40){\line(1,0){2.25}}
\put(78,45){\line(1,0){1.5}}
\put(88.5,45){\line(1,0){1.5}}
\put(66,15){\circle*{0.9}}
\put(67.5,15){\circle*{0.9}}
\put(69,10){\circle*{0.9}}
\put(70.5,10){\circle*{0.9}}
\put(72,30){\circle*{0.9}}
\put(73.5,30){\circle*{0.9}}
\put(75,25){\circle*{0.9}}
\put(76.5,20){\circle*{0.9}}
\put(78,45){\circle*{0.9}}
\put(79.5,45){\circle*{0.9}}
\put(81,40){\circle*{0.9}}
\put(82.5,40){\circle*{0.9}}
\put(84,40){\circle*{0.9}}
\put(85.5,35){\circle*{0.9}}
\put(87,40){\circle*{0.9}}
\put(88.5,45){\circle*{0.9}}
\put(90,45){\circle*{0.9}}
\put(91.5,30){\circle*{0.9}}
\put(93,30){\circle*{0.9}}
\put(94.5,20){\circle*{0.9}}
\put(80.25,5){\line(0,-1){5}}

\end{picture}
\caption{The discrete time construction for $n = 20$. In the tree, by {\em edges} we mean the $n-1$ vertical edges.  The leaves have edge-heights from $2$ to $9$.}
\label{Fig:1}
\end{figure}

We call the continuous-time model the 
{\em critical beta-splitting random tree}, but must emphasize that the word {\em critical} does not have its usual meaning within branching processes.
Instead, amongst the one-parameter family of splitting probabilities with
\begin{equation}
q(m,i) \propto i^\beta (m-i)^\beta, \ -2 < \beta < \infty \label{beta}
\end{equation}
our parameter value $\beta  = -1$ is {\em critical} in the sense that leaf-heights change from order $n^{-\beta - 1}$ to order $\log n$ 
at that value, as noted many years ago when this family was introduced \cite{Ald1}.

Finally, our results do not use the leaf-labels $\{1,2,\ldots, n \}$ in the interval-splitting construction.
Instead they involve a uniform {\em random} leaf. 
Equivalently, one could take a uniform random permutation of labels and then talk about the leaf with some arbitrary label.

\subsection{Outline of results}
\label{sec:outline}
 Our main focus is on two related random variables associated with the continuous-time random tree on $n$
leaves:
\begin{itemize}
\item $D_n = $ time-height of a uniform random leaf;
\item $L_n = $ edge-height of a uniform random leaf.
\end{itemize}
We start with sharp asymptotic formulas for the moments of $D_n$ and $L_n$. They are of considerable interest in their own right, and also because the techniques are then extended for analysis of the limiting distributions, with the moments estimates enabling us to guess what those distributions should be.   

Write $\zeta(\cdot)$  for the Riemann zeta-function, $\zeta(r):=\sum_{j=1}^{\infty} \tfrac{1}{j^r}$, ($r>1$).
Note that $\zeta(2) = \pi^2/6$ and that $\zeta^{-1}(2)$ below means $1/\zeta(2)$, not the inverse function.
Write $\ga$ for the Euler-Masceroni constant, which will appear frequently in our analysis: $\sum_{j=1}^n\tfrac{1}{j}=\log n +\ga+O(n^{-1})$.
Asymptotics are as $n \to \infty$.
\begin{theorem}\label{thmA}
\begin{align*}
\E[D_n]&=\zeta^{-1}(2)\log n+O(1),\\
\var(D_n)&=(1+o(1))\tfrac{2\zeta(3)}{\zeta^3(2)}\log n,
\end{align*}
and, contingent on a numerically supported ``$h$-ansatz'' (see section \ref{sec:ansatz}),
\[
\E[D_n] = \zeta^{-1}(2) \log n+c_0-\tfrac{1}{2 \zeta(2)} n^{-1} +O(n^{-2})
\]
for a constant $c_0$ estimated numerically, and
\[
\var(D_n)=\tfrac{2\zeta(3)}{\zeta^3(2)}\log n +O(1).
\]
\end{theorem}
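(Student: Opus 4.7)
The plan is to reduce both asymptotics to a single convolution recursion and then analyze that recursion via sandwich bounds. Conditioning on the first split and on which subtree contains the uniform random leaf, the leaf lies in a subtree of size $j\in\{1,\dots,n-1\}$ with probability
\[
\P(J_n = j) \;=\; \tfrac{2j}{n}\,q(n,j) \;=\; \tfrac{1}{h_{n-1}(n-j)},
\]
so $D_n \stackrel{d}{=} E_n + D'_{J_n}$ with $E_n\sim\mathrm{Exp}(h_{n-1})$ independent of $D'_{J_n}$. Taking expectations and applying the law of total variance gives
\[
h_{n-1}a_n = 1+\sum_{j=1}^{n-1}\tfrac{a_j}{n-j},\qquad w_n = \tfrac{1}{h_{n-1}^2} + \E[w_{J_n}] + \var(a_{J_n}),
\]
where $a_n:=\E[D_n]$ and $w_n:=\var(D_n)$.

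I would first pin down the leading term of $a_n$. With the substitution $k=n-j$,
\[
\E[\log J_n] = \log n + \tfrac{1}{h_{n-1}}\sum_{k=1}^{n-1}\tfrac{\log(1-k/n)}{k} = \log n - \tfrac{\zeta(2)}{h_{n-1}} + O\!\left(\tfrac{\log n}{n\,h_{n-1}}\right),
\]
the inner sum converging to $-\int_0^1\tfrac{-\log(1-x)}{x}\,dx=-\zeta(2)$. Plugging the ansatz $a_n\sim c\log n$ into the recursion and matching the $1/h_{n-1}$ coefficient forces $c=\zeta^{-1}(2)$. To promote this heuristic to a rigorous bound, as the paper's introduction advertises, I would construct super- and sub-solutions of the form $A_n^{\pm} := \zeta^{-1}(2)\log n \pm C + (\text{small }n\text{-dependent correction})$, with $C$ chosen large enough that the Euler--Maclaurin remainder is dominated and the one-sided inequalities
\[
h_{n-1}A_n^{+} \ge 1+\sum_{j=1}^{n-1}\tfrac{A_j^{+}}{n-j}, \qquad h_{n-1}A_n^{-}\le 1+\sum_{j=1}^{n-1}\tfrac{A_j^{-}}{n-j}
\]
hold for all $n\ge n_0$. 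Strong induction on $n$ then delivers $A_n^{-}\le a_n\le A_n^{+}$, and hence $a_n = \zeta^{-1}(2)\log n + O(1)$.

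For the variance, I would now use $a_j=\zeta^{-1}(2)\log j + \phi_j$ with $\phi_j=O(1)$ to analyze $\var(a_{J_n})$. A direct computation gives
\[
\var(\log J_n) = \tfrac{2\zeta(3)}{h_{n-1}} + O(h_{n-1}^{-2}),
\]
the leading constant arising from $\int_0^1\tfrac{\log^2(1-x)}{x}\,dx = 2\zeta(3)$ (equivalently the classical $\sum_{n\ge 1}h_n/n^2=2\zeta(3)$). The contribution of $\phi_j$ to $\var(a_{J_n})$ requires a separate smoothness argument: substituting $a=\zeta^{-1}(2)\log+\phi$ into the mean recursion gives $\phi_n = \E[\phi_{J_n}] + O(n^{-1})$, which, combined with the concentration of $J_n$ near $n$, yields $\var(\phi_{J_n})=o(h_{n-1}^{-1})$ and $\mathrm{Cov}(\log J_n,\phi_{J_n})=o(h_{n-1}^{-1})$. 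Therefore $\var(a_{J_n}) = \tfrac{2\zeta(3)}{\zeta^2(2)\,h_{n-1}}(1+o(1))$, so the $w_n$-recursion has exactly the same structure as the $a_n$-recursion but with the driving term $1/h_{n-1}$ replaced by $\tfrac{2\zeta(3)}{\zeta^2(2)\,h_{n-1}}(1+o(1))$. Rerunning the sandwich argument with bounds $W_n^{\pm} := \bigl(\tfrac{2\zeta(3)}{\zeta^3(2)} \pm \epsilon\bigr)\log n + C_\epsilon$ then yields $w_n = (1+o(1))\tfrac{2\zeta(3)}{\zeta^3(2)}\log n$.

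The main obstacle, in my view, is the variance step: controlling the $\phi_j$ contribution to $\var(a_{J_n})$ without assuming a limit for $\phi_n$ requires leveraging the smoothness bound $\phi_n - \E[\phi_{J_n}] = O(n^{-1})$ together with the tail behavior $\P(J_n<(1-\epsilon)n) = O(1/\log n)$, and this is exactly the type of one-sided estimate where Laplace transforms (rather than Fourier characteristic functions, which would not interact well with the sandwich inequalities) become essential. The refined statements conditional on the $h$-ansatz cannot be extracted from the sandwich alone; they require a genuine pointwise asymptotic expansion of the solution of the linear recurrence, and the $h$-ansatz --- a conjectural structural form for the fluctuating $O(1)$ part, plausibly expressing $\phi_n$ in closed form in terms of harmonic-type functions --- is what identifies the constant $c_0$, produces the $-\tfrac{1}{2\zeta(2)}n^{-1}$ correction (inherited from the leading Euler--Maclaurin correction to $\sum_{k=1}^{n-1}\log(1-k/n)/k$ reweighted by $\zeta^{-1}(2)$), and upgrades the variance error from $o(\log n)$ to $O(1)$.
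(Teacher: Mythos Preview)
Your treatment of the mean is essentially the paper's argument: the same recursion, the same identification of $\zeta^{-1}(2)$ via $\int_0^1\tfrac{\log(1/x)}{1-x}\,dx=\zeta(2)$, and the same super/sub-solution sandwich.

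The variance argument has a genuine gap. You correctly rewrite the variance recursion as $w_n=\tfrac{1}{h_{n-1}^2}+\E[w_{J_n}]+\var(a_{J_n})$ (this is equivalent to the paper's identity $v_n=\tfrac{1}{h_{n-1}}\sum_k\tfrac{v_k+(\E[D_n]-\E[D_k])^2}{n-k}$), and you correctly compute $\var(\log J_n)=\tfrac{2\zeta(3)}{h_{n-1}}+O(h_{n-1}^{-2})$. The problem is the claim that $\var(\phi_{J_n})=o(h_{n-1}^{-1})$ and $\mathrm{Cov}(\log J_n,\phi_{J_n})=o(h_{n-1}^{-1})$ follow from $\phi_n-\E[\phi_{J_n}]=O(n^{-1})$ together with concentration of $J_n$. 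That smoothness relation constrains only the \emph{mean} of $\phi_{J_n}$; it says nothing about its spread. With merely $\phi_j=O(1)$ one gets $\tfrac{1}{h_{n-1}}\sum_k\tfrac{(\phi_n-\phi_k)^2}{n-k}=O(1)$, which is the \emph{same} order as the target driving term $\tfrac{2\zeta(3)}{\zeta^2(2)\,h_{n-1}}\cdot h_{n-1}$, so the sandwich cannot isolate the constant $\tfrac{2\zeta(3)}{\zeta^3(2)}$. Even if one could prove $\phi_n\to c_0$ (which your argument does not establish), that would yield $\var(\phi_{J_n})\to 0$ but not the required rate $o(1/\log n)$ without a quantitative convergence rate for $\phi_n$.

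The paper confronts exactly this obstruction and resolves it in two separate ways. For the sharp statement $\var(D_n)=\tfrac{2\zeta(3)}{\zeta^3(2)}\log n+O(1)$ it \emph{assumes} the $h$-ansatz $\phi_n=c_0+O(n^{-1})$, which makes $(\phi_n-\phi_k)^2=O(k^{-2})$ and kills the offending terms. For the unconditional first-order statement it does \emph{not} patch the recursion at all; instead it proves the CLT directly by showing that the Laplace transform $\E[e^{uD_n}]$ (which itself satisfies a recursion in $n$) is squeezed between functions of the form $\exp[(u\alpha_1+u^2\alpha_2)\log n]$, and reads off the variance constant from the $u^2$ coefficient $\alpha_2=\tfrac{\zeta(3)}{\zeta^3(2)}$. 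So the Laplace transforms are not a tool to rescue the $\var(\phi_{J_n})$ estimate inside your recursion scheme; they replace that scheme entirely for the variance leading term.
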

\begin{theorem}\label{thmB}
\begin{align*}
\E[L_n]&=\tfrac{1}{2\zeta(2)}\log^2 n+\tfrac{\ga\,\zeta(2)+\zeta(3)}{\zeta^2(2)}\log n +O(1),\\
\var(L_n)&=\tfrac{2\zeta(3)}{3\zeta^3(2)}\log^3 n+O(1).
\end{align*}
\end{theorem}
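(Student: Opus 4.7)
The analysis starts from the distributional recursion. Conditioning on the first split $J_n\sim q(n,\cdot)$ and on which subtree contains the uniform random leaf, the size $M_n$ of that subtree satisfies
$$\P(M_n=m)=\tfrac{m}{n}\bigl(q(n,m)+q(n,n-m)\bigr)=\tfrac{1}{h_{n-1}(n-m)},\qquad 1\le m\le n-1,$$
and hence $L_n\stackrel{d}{=}1+L_{M_n}$ (independent copies, $L_1=0$). Writing $a_n:=\E[L_n]$, this yields the sum-type recurrence
$$a_n=1+\frac{1}{h_{n-1}}\sum_{k=1}^{n-1}\frac{a_{n-k}}{k}.$$

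The workhorse is the asymptotic expansion of the sums $S_j(n):=\sum_{k=1}^{n-1}\log^j(n-k)/k$. Writing $\log(n-k)=\log n+\log(1-k/n)$ and expanding binomially reduces these to the auxiliary sums $\sigma_\ell(n):=\sum_{k=1}^{n-1}\log^\ell(1-k/n)/k$; by the multiple-geometric-series form of $(-\log(1-x))^\ell$ and the identity $\int_0^1(-\log(1-x))^\ell\,dx/x=\ell!\,\zeta(\ell+1)$, one obtains $\sigma_\ell(n)=(-1)^\ell\ell!\,\zeta(\ell+1)+o(1)$. In particular, $S_2(n)=h_{n-1}\log^2 n-2\zeta(2)\log n+2\zeta(3)+o(1)$ and $S_3(n)=h_{n-1}\log^3 n-3\zeta(2)\log^2 n+6\zeta(3)\log n+O(1)$. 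Substituting the ansatz $a_n=A\log^2 n+B\log n+C+o(1)$ into the recurrence and using $\log n/h_{n-1}=1-\ga/h_{n-1}+O(n^{-1})$, coefficient matching at orders $1$ and $1/h_{n-1}$ forces $2A\zeta(2)=1$ and $B\zeta(2)=2A(\ga\zeta(2)+\zeta(3))$, giving the asserted $A=1/(2\zeta(2))$ and $B=(\ga\zeta(2)+\zeta(3))/\zeta^2(2)$. To upgrade this formal matching to a rigorous $O(1)$ bound I follow the inequality-bracketing scheme announced in the abstract: set $a_n^{\pm}:=A\log^2 n+B\log n\pm K$ for a sufficiently large constant $K$, verify by direct insertion and the $S_j$ expansions that $a_n^{+}$ super-solves and $a_n^{-}$ sub-solves the recurrence for all large $n$, and conclude $a_n^{-}\le a_n\le a_n^{+}$ by induction from the monotonicity of the linear operator $u\mapsto h_{n-1}^{-1}\sum_k u_{n-k}/k$.

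For the variance, the law of total variance applied to $L_n=1+L_{M_n}$ yields
$$v_n=\frac{1}{h_{n-1}}\sum_{k=1}^{n-1}\frac{v_{n-k}}{k}+V_n,\qquad V_n:=\var(a_{M_n}).$$
Writing $\log M_n=\log n+Y_n$ with $Y_n=\log(1-K_n/n)$ and $\P(K_n=k)=1/(h_{n-1}k)$, the $\sigma_\ell$ identities give moments $\E[Y_n^\ell]=(-1)^\ell\ell!\,\zeta(\ell+1)/h_{n-1}+o(1/h_{n-1})$, from which a direct computation produces $\var(\log^2 M_n)=8\zeta(3)\log^2 n/h_{n-1}+O(\log n/h_{n-1})$. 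Because the dominant contribution to $a_m$ is $A\log^2 m$ by the first half, $V_n=A^2\var(\log^2 M_n)+\cdots=\tfrac{2\zeta(3)}{\zeta^2(2)}\cdot\tfrac{\log^2 n}{h_{n-1}}+O(\log n/h_{n-1})$. Substituting the ansatz $v_n=E\log^3 n+o(\log^3 n)$ into the recurrence and invoking the $S_3$ expansion, the leading balance $3E\zeta(2)\log^2 n/h_{n-1}=V_n$ forces $E=2\zeta(3)/(3\zeta^3(2))$. The $O(1)$ error is then extracted by the same inequality-bracketing argument, applied to the cubic-in-$\log n$ ansatz for $v_n$.

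The main obstacle is the bracketing step. The iteration descends through all subtree sizes down to $m=O(1)$, where the two-term (respectively three-term) asymptotic form of $a_n^{\pm}$ (respectively $v_n^{\pm}$) breaks down, so the constant $K$ must be chosen large enough to absorb both the small-$n$ boundary contribution and the propagated $O(n^{-1}\log^j n)$ errors from the $S_j$ expansions. This is more delicate in the variance half: the $O(\log n/h_{n-1})$ correction in $V_n$ would naively integrate through the recurrence to an $O(\log^2 n)$ contribution in $v_n$, so tracking the subleading terms in $V_n$ and verifying that their $\log^2 n/h_{n-1}$-level pieces cancel against the corresponding terms in the $S_3$ expansion is essential for obtaining the claimed $O(1)$ error.
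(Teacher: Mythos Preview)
Your overall strategy is the same as the paper's: derive the linear recurrence for $\E[L_n]$ (Proposition~\ref{2new}) and the variance recurrence (Lemma~\ref{lem0.91}, which is algebraically equivalent to your law-of-total-variance formula $v_n=\E[v_{M_n}]+\var(a_{M_n})$), match coefficients of an ansatz using the sums $\sigma_\ell(n)\to(-1)^\ell\ell!\,\zeta(\ell+1)$, and then bound the remainder by a recursive-inequality comparison.

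There is, however, a genuine gap in your bracketing step. The candidates $a_n^{\pm}=A\log^2 n+B\log n\pm K$ cannot super/sub-solve the recurrence: the averaging operator $u\mapsto h_{n-1}^{-1}\sum_k u_{n-k}/k$ fixes constants, so the $\pm K$ appears identically on both sides of the inequality and gives no slack. After cancellation you are asking that $0\ge O(n^{-1}\log^2 n)$ with an uncontrolled sign, which fails. The paper does \emph{not} bracket with constants. It subtracts the two-term ansatz, obtains for the remainder $u_\nu$ the recurrence
\[
u_\nu=\tfrac{1}{h_{\nu-1}}\Bigl(\sum_k\tfrac{u_k}{\nu-k}+O(\nu^{-1}\log^2\nu)\Bigr),
\]
and then (via the argument following \eqref{3.039}) compares with the explicit test function $z_\nu=c\bigl(\log^2 14-\tfrac{\log^2(14\nu)}{\nu}\bigr)$, whose concavity makes the recurrence inequality go through. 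The crucial point is that the forcing is $O(\nu^{-1}\log^2\nu)$, not merely $O(1)$; a bounded forcing in this recurrence produces growth of order $\log^2\nu$ (exactly the $\E[L_n]$ computation), so a decaying test function is essential.

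Your diagnosis of the variance difficulty is correct and honest: with only the one-term ansatz $v_\nu\approx A^*\log^3\nu$, the residual forcing is $O(1)$, which by the same mechanism propagates to $O(\log^2\nu)$, not $O(1)$. To reach the stated $O(1)$ you would need to push the ansatz to include $\log^2\nu$ and $\log\nu$ terms and verify that the residual forcing then decays like $O(\nu^{-1}\log^c\nu)$, after which the paper's test-function comparison applies. The paper's own write-up of this step is terse, but the method it uses for $\E[L_n]$ is the template; your $\pm K$ scheme is not a substitute for it.
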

\noindent
The various parts of Theorem \ref{thmA} are proved in sections \ref{sec:Dmoments}--\ref{sec:var} and \ref{sec:Dnorm},
and Theorem \ref{thmB} is proved in section \ref{sec:Lmoments}.
These theorems immediately yield the WLLNs (weak laws of large numbers) for $D_n$ and $L_n$, with rates, as follows.
\begin{corollary}\label{thmC} In probability
\[
\P\Bigl(\big|\tfrac{D_n}{\E[D_n]}-1\big|\ge \eps\Bigr),\,\,\P\Bigl(\big|\tfrac{L_n}{\E[L _n]}-1\big|\ge \eps\Bigr)=O\bigl(\eps^{-2} \log^{-1} n\bigr).
\]
\end{corollary}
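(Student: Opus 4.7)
The plan is to obtain both bounds as a one-line application of Chebyshev's inequality, using the moment estimates supplied by Theorems \ref{thmA} and \ref{thmB}.

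First I would note that for any random variable $X_n$ with $\E[X_n] \ne 0$,
\[
\P\Bigl(\bigl|\tfrac{X_n}{\E[X_n]}-1\bigr|\ge \eps\Bigr)=\P\bigl(|X_n-\E[X_n]|\ge \eps\,\E[X_n]\bigr)\le \frac{\var(X_n)}{\eps^{2}(\E[X_n])^{2}},
\]
by Chebyshev. So the corollary reduces to checking that $\var(X_n)/(\E[X_n])^{2}=O(\log^{-1}n)$ in each case.

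For $X_n=D_n$, Theorem \ref{thmA} gives $\E[D_n]=\zeta^{-1}(2)\log n+O(1)$, so $(\E[D_n])^{2}=\zeta^{-2}(2)\log^{2}n\,(1+o(1))$, while $\var(D_n)=O(\log n)$. Their ratio is $O(\log^{-1} n)$, as required. For $X_n=L_n$, Theorem \ref{thmB} gives $(\E[L_n])^{2}\sim\frac{1}{4\zeta^{2}(2)}\log^{4}n$ and $\var(L_n)=O(\log^{3}n)$, and again the ratio is $O(\log^{-1}n)$.

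There is no real obstacle here: both estimates are immediate consequences of the sharp mean and variance asymptotics already established, and the only mild point worth flagging is that the constant absorbed in the $O(\cdot)$ depends on $\zeta(2)$ and $\zeta(3)$ but not on $\eps$ or $n$, which is exactly what the statement requires.
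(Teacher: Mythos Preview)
Your proof is correct and is exactly what the paper intends: the corollary is stated immediately after Theorems \ref{thmA} and \ref{thmB} with the remark that these theorems ``immediately yield the WLLNs,'' and Chebyshev applied to the mean and variance asymptotics is precisely that immediate argument.
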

\noindent
Consider next the
 time-height  $\mathcal D_n$ and the edge-height $\mathcal L_n$ of the random tree itself, 
 that is the largest time length and the largest edge length  of a path from the root to a leaf.
By upper-bounding the Laplace transforms of $D_n$ and $L_n$,
we prove  in sections \ref{sec:Rtail}   and  \ref{sec:Dtail} 
\begin{theorem}\label{thmD} There exists $\rho>0$ such that for all $\eps\in (0,1)$ we have 
\[
\P\Bigl(\mathcal D_n\ge (2+\eps)\log n\Bigr)\le \tfrac{1}{n^{\rho\eps}},
\]
\end{theorem}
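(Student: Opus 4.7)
My plan is to control $\mathcal{D}_n$ via the Laplace transform of the single-leaf time-height $D_n$, then amplify by a union bound over leaves. By the symmetry $q(m,i)=q(m,m-i)$, the construction is exchangeable in the leaf labels, so the heights $H_1,\ldots,H_n$ all have the distribution of $D_n$, and hence $\P(\mathcal{D}_n\ge t)\le n\,\P(D_n\ge t)$. It thus suffices to show $\P(D_n\ge (2+\eps)\log n)\le n^{-1-\rho\eps}$. For this I would work with $F_n(\theta):=\E[e^{\theta D_n}]$, finite for $\theta<1$ (the binding constraint being the final holding time in state $m=2$, which is $\mathrm{Exp}(h_1)=\mathrm{Exp}(1)$). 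Conditioning on the first split, and noting that the uniform random leaf lands in the subtree of size $m$ with probability $1/[h_{n-1}(n-m)]$, leads to the clean recursion
\[
F_n(\theta)=\frac{1}{h_{n-1}-\theta}\sum_{j=1}^{n-1}\frac{F_{n-j}(\theta)}{j},\qquad F_1(\theta)\equiv 1.
\]

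The heart of the argument is to establish an inductive polynomial bound $F_n(\theta)\le C(\theta)\,n^{\alpha(\theta)}$ for a suitable $\alpha(\theta)$. Substituting the ansatz makes the right-hand side $C\,n^\alpha\,(h_{n-1}-t_n(\alpha))/(h_{n-1}-\theta)$, where
\[
t_n(\alpha):=\sum_{j=1}^{n-1}\frac{1-(1-j/n)^\alpha}{j}\;\uparrow\; H(\alpha):=\gamma+\Psi(\alpha+1)=\int_0^1\frac{1-(1-y)^\alpha}{y}\,dy.
\]
Hence the induction closes as soon as $t_n(\alpha)\ge\theta$, which holds for all large $n$ whenever $\alpha$ is strictly above the critical value $H^{-1}(\theta)$; the finitely many small-$n$ cases are absorbed by enlarging $C(\theta)$.

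Combining Markov's inequality with the leaf union bound will then give
\[
\P(\mathcal{D}_n\ge (2+\eps)\log n)\le C(\theta)\,n^{\,1+\alpha(\theta)-(2+\eps)\theta}.
\]
The crux is the identity $H(1)=1$: this forces $\alpha(1)=1$, so $1+\alpha(\theta)-2\theta\to 0$ as $\theta\uparrow 1$. A Taylor expansion using $H'(1)=\Psi'(2)=\zeta(2)-1$ yields $1+\alpha(\theta)-2\theta = c(1-\theta)+O((1-\theta)^2)$ with $c:=2-1/(\zeta(2)-1)\approx 0.45>0$, so choosing $\theta=1-\eta\eps$ for a small fixed $\eta$ produces a net exponent of $-\rho\eps$ for some $\rho>0$, as required.

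The main obstacle will be the inductive Laplace bound itself: one needs the convergence $t_n(\alpha)\uparrow H(\alpha)$ to be quantitative and uniform over a neighborhood of $\alpha=1$, and one needs to control the growth of $C(\theta)$ as $\theta\uparrow 1$ (since $F_2(\theta)=(1-\theta)^{-1}$ already blows up there), so that the $\eps$-dependent choice $\theta=1-\eta\eps$ does not spoil the final bound through an uncontrolled prefactor. A fixed $\theta\in (0,1)$ cannot suffice, because $1+\alpha(\theta)-2\theta>0$ there and the small-$\eps$ bound would fail; the flexibility to let $\theta$ tend to $1$ together with $\eps$ is what makes the critical identity $H(1)=1$ actually pay off.
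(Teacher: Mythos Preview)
Your approach is correct and is essentially the paper's own argument: the same Laplace-transform recursion, the same polynomial ansatz $\phi_\nu\le C\,\nu^{\alpha}$, and the same union bound over leaves, all hinging on the critical identity $H(1)=1$. The one tactical difference is that the paper fixes the exponent at exactly $\alpha=1$ (in your notation), where $t_n(1)=\sum_{j<n}\tfrac{j/n}{j}=\tfrac{n-1}{n}$ is explicit, so the induction closes precisely for $n>1/(1-\theta)$ with no asymptotics needed; taking $\theta=(1+\eps/2)^{-1}$ then gives $\P(\mathcal D_n\ge 2(1+\eps)\log n)=O\bigl(n^{-\eps/(1+\eps/2)}\bigr)$ directly, bypassing your Taylor expansion through $H'(1)=\zeta(2)-1$.

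Your flagged concern about $C(\theta)$ blowing up as $\theta\uparrow 1$ is legitimate and is present in the paper's proof as well: for $\nu$ below the induction threshold (of order $2/\eps$) one falls back on the crude bound $\phi_\nu(\theta)\le(1-\theta)^{-(\nu-1)}$, and the resulting $\eps$-dependent prefactor is simply absorbed into the $O(\cdot)$ in Proposition~\ref{0.915}(ii). One small point: the monotonicity $t_n(\alpha)\uparrow H(\alpha)$ that you assert need not hold for all $\alpha$, but you only need convergence $t_n(\alpha)\to H(\alpha)$, which suffices for the induction to close for all large $n$ once $\alpha>H^{-1}(\theta)$.
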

\begin{theorem}\label{thmE} 
Let $\be=\min_{\a>1/\log 2}\bigl[\a+\tfrac{4\a^2\zeta(3)}{\a\log 2-1}\bigr]\approx 42.9$. For $\eps\in (0,1)$, 
\[
\P\Bigl(\mathcal L_n\ge (1+\eps)\be\log^2 n)\Bigr)\le \exp\bigl(-\Theta(\eps\log n)\bigr).
\]
\end{theorem}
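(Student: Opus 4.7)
The approach is the classical Chernoff--Markov method for the Laplace transform of the one-leaf edge-height $L_n$, preceded by a union bound over the leaves. First, since $\mathcal{L}_n=\max_v\mathrm{depth}(v)$ and $L_n$ is the edge-height of a uniformly chosen leaf,
\[
\P(\mathcal{L}_n\ge x)\;\le\;\E\bigl[\#\{\text{leaves }v:\mathrm{depth}(v)\ge x\}\bigr]\;=\;n\,\P(L_n\ge x).
\]
Following the subinterval containing a uniform random leaf through the first split shows that $L_n\stackrel{d}{=}1+L'_J$, where size-biasing of $q(n,\cdot)$ gives $\P(J=j)=1/[h_{n-1}(n-j)]$. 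Setting $\Phi_n(\alpha):=\E[e^{\alpha L_n}]$, this yields
\[
\Phi_n(\alpha)\;=\;\frac{e^\alpha}{h_{n-1}}\sum_{j=1}^{n-1}\frac{\Phi_j(\alpha)}{n-j},\qquad\Phi_1(\alpha)=1,
\]
to be combined with Markov's inequality $\P(L_n\ge x)\le e^{-\alpha x}\Phi_n(\alpha)$.

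The central technical step is to prove by induction on $n$ that for each $\alpha>1/\log 2$,
\[
\Phi_n(\alpha)\;\le\;C(\alpha)\,\exp\!\Bigl[\bigl(\alpha^2+\tfrac{4\alpha^3\zeta(3)}{\alpha\log 2-1}\bigr)\log^2 n\Bigr].
\]
The coefficient is dictated by the form of $\beta$: if $\log\Phi_n(\alpha)\le g(\alpha)\log^2 n+O(\log n)$, then Chernoff-optimising in $\alpha$ yields $\beta=\min_\alpha g(\alpha)/\alpha$, matching the statement precisely when $g(\alpha)=\alpha^2+4\alpha^3\zeta(3)/(\alpha\log 2-1)$. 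Substituting the ansatz into the recursion, the dominant contribution from $j$ close to $n$ nearly saturates $\Phi_n(\alpha)h_{n-1}/e^\alpha$; the residual from substantially smaller $j$ reduces (after accounting for the $e^\alpha$ Chernoff factor per split and the $\approx\log 2/\log m$ probability of a ``halving'' step at state $m$) to a geometric-type series with ratio $\sim e/2^\alpha$, which is absolutely summable iff $2^\alpha>e$, i.e.\ $\alpha\log 2>1$. The $\zeta(3)$ factor materialises through cumulative second-moment corrections analogous to those of Theorem~\ref{thmB}.

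Combining the union bound, Chernoff's inequality and the Laplace estimate, and setting $x=(1+\eps)\beta\log^2 n$,
\[
\P(\mathcal{L}_n\ge x)\;\le\;n\,C(\alpha)\exp\!\Bigl[\alpha\bigl(\alpha+\tfrac{4\alpha^2\zeta(3)}{\alpha\log 2-1}-(1+\eps)\beta\bigr)\log^2 n\Bigr].
\]
Picking $\alpha$ equal to the minimiser $\alpha^*$ of $\alpha+4\alpha^2\zeta(3)/(\alpha\log 2-1)$ zeroes out the unperturbed part of the bracket and leaves the exponent as $-\alpha^*\beta\eps\log^2 n+\log n+O(1)$, giving the desired $\exp(-\Theta(\eps\log n))$ bound.

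The hardest step is the inductive Laplace bound on $\Phi_n(\alpha)$. A polynomial ansatz $\Phi_n(\alpha)\le Kn^\nu$ cannot possibly close, because Jensen's inequality together with Theorem~\ref{thmB} already forces $\log\Phi_n(\alpha)\ge\alpha\log^2 n/(2\zeta(2))$, so $\Phi_n(\alpha)$ is intrinsically super-polynomial in $n$. One must therefore work at the $\exp(c\log^2 n)$ scale, and the combinatorics of the recursion --- analysed via integrals of the form $\int_0^1(1-t)^{2c\log n}\,(\cdots)/t\,dt$ --- must be kept just tight enough to recover the specific coefficient $\alpha^2+4\alpha^3\zeta(3)/(\alpha\log 2-1)$, with the condition $\alpha>1/\log 2$ emerging as the precise divide between convergent and divergent series in the residual estimate.
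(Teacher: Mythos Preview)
Your overall architecture --- union bound over leaves, Chernoff on the Laplace transform, and the recursion
\[
\Phi_\nu(u)=\tfrac{e^{u}}{h_{\nu-1}}\sum_{k=1}^{\nu-1}\tfrac{\Phi_k(u)}{\nu-k}
\]
--- matches the paper.  The fatal gap is the claimed inductive bound $\Phi_n(\alpha)\le C(\alpha)\exp\bigl[g(\alpha)\log^2 n\bigr]$ for a \emph{fixed} Laplace parameter $\alpha>1/\log 2$.  This inequality is not merely unproved in your sketch; it is false.  Feeding the ansatz $\psi_\nu=\exp(c\log^2\nu)$ into the recursion gives
\[
\frac{e^{\alpha}}{h_{\nu-1}}\sum_{k=1}^{\nu-1}\frac{\psi_k}{(\nu-k)\,\psi_\nu}
=\frac{e^{\alpha}}{h_{\nu-1}}\sum_{k=1}^{\nu-1}\frac{\exp\!\bigl(c(\log^2 k-\log^2\nu)\bigr)}{\nu-k}
= e^{\alpha}\Bigl(1-O\bigl(\tfrac{\log\log\nu}{\log\nu}\bigr)\Bigr),
\]
since the sum is dominated by $k$ with $\nu-k\lesssim \nu/(c\log\nu)$, over which $\psi_k/\psi_\nu\approx 1$ and $\sum 1/(\nu-k)\approx \log\nu-\log\log\nu$.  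For any fixed $\alpha>0$ this ratio exceeds $1$ for large $\nu$, so the induction can never close, no matter what constant $c$ (or lower-order correction $d\log\nu$, $d\log\nu\log\log\nu$, \ldots) you insert.  In fact the same balance shows that $\Phi_n(\alpha)$ for fixed $\alpha>0$ grows like $\exp\bigl(\Theta(n^{1-e^{-\alpha}})\bigr)$, far beyond $\exp(c\log^2 n)$.  A symptom of this is that your own final line yields an exponent $-\alpha^*\beta\eps\log^2 n$, i.e.\ $\exp(-\Theta(\eps\log^2 n))$, strictly stronger than the theorem's $\exp(-\Theta(\eps\log n))$; the mismatch is because the premise is wrong.

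The paper's remedy is to let the Laplace parameter depend on $n$: one takes $u=O(\log^{-1}n)$ and an ansatz $g_\nu(u)=\exp(u\alpha\log^2\nu)$ with a \emph{separate} constant $\alpha>1/\log 2$.  Now $e^{u}=1+u+O(u^2)$, and a second-order expansion gives
\[
\tfrac{e^{u}}{g_\nu(u)h_{\nu-1}}\sum_k\tfrac{g_k(u)}{\nu-k}
\le 1-u\bigl(\alpha\log 2-1-\tfrac{2\alpha\zeta(3)}{\log\nu}\bigr)+u^2\bigl(1+4\alpha^2\zeta(3)\log\nu\bigr),
\]
which is $\le 1$ precisely when $u\le \tfrac{\alpha\log 2-1}{(4+\delta)\alpha^2\zeta(3)\log n}$.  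This is where the condition $\alpha>1/\log 2$ and the $4\alpha^2\zeta(3)$ actually arise.  With this $n$-dependent $u$ one gets $\Phi_n(u)=O(e^{u\alpha\log^2 n})$, and the Chernoff/union bound produces an exponent of order $u\log^2 n=\Theta(\log n)$, giving exactly $\exp(-\Theta(\eps\log n))$ after optimising $\alpha$.
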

\noindent
We conjecture that both $\tfrac{\mathcal D_n}{\log n}$ and $\tfrac{\mathcal L_n}{\log^2 n}$ converge, in probability, 
to constants.

 The definitions of $D_n$ and $L_n$ involve two levels of randomness, the random tree and the random leaf within the tree.
 To study the interaction between levels, it is natural to consider the correlation between the heights of two leaves within the same realization of the random tree.
 Write $D_n^{(1)}$ and $D_n^{(2)}$ for the time-heights of two distinct leaves chosen uniformly from all  pairs of leaves.
 We study the correlation coefficient defined by
\[
r_n=\tfrac{\E[D_n^{(1)} D_n^{(2)}]-\E^2[D_n]}{\text{Var}(D_n)},
\]
and prove in section \ref{sec:corr} 
\begin{theorem}\label{thmF} Contingent on the $h$-ansatz, $r_n=O(\log^{-1}n)$, that is asymptotically $D_n^{(1)}$ and $D_n^{(2)}$ are uncorrelated.
 \end{theorem}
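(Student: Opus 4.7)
The plan is to reduce Theorem \ref{thmF} to proving that the covariance $w_n := \E[D_n^{(1)} D_n^{(2)}] - \E^2[D_n]$ satisfies $w_n = O(1)$; combined with Theorem \ref{thmA}'s $\var(D_n) \sim \tfrac{2\zeta(3)}{\zeta^3(2)} \log n$, this immediately yields $r_n = O(\log^{-1} n)$. Write $f_n := \E[D_n]$ and $\alpha_{n,k} := f_k - f_n + 1/h_{n-1}$. I would derive a recurrence for $w_n$ by conditioning on the root holding time $T$ (exponential of rate $h_{n-1}$, so $\var(T) = h_{n-1}^{-2}$) and on the first split size $J \sim q(n,\cdot)$. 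Given $J=j$, the two uniform distinct leaves lie together in the size-$j$ subtree, the size-$(n-j)$ subtree, or straddle the split with probabilities $j(j-1)/(n(n-1))$, $(n-j)(n-j-1)/(n(n-1))$, and $2j(n-j)/(n(n-1))$ respectively. Writing $D_n^{(i)} - f_n = (T - 1/h_{n-1}) + (\widetilde D_k^{(i)} - f_k) + \alpha_{n,k}$ whenever leaf $i$ lies in a size-$k$ subtree and exploiting conditional independence of the two subtrees, I obtain
\begin{equation*}
w_n = \var(T) + \sum_{j=1}^{n-1} q(n,j)\,\tfrac{2j(j-1)}{n(n-1)}\,w_j + R_n,
\end{equation*}
with total weight $\sum_j q(n,j)\tfrac{2j(j-1)}{n(n-1)} = 1 - h_{n-1}^{-1}$ and inhomogeneous term
\begin{equation*}
R_n = \tfrac{1}{n(n-1)}\sum_{j=1}^{n-1} q(n,j)\Bigl[\bigl(j\alpha_{n,j}+(n-j)\alpha_{n,n-j}\bigr)^{2} - 2j\alpha_{n,j}^{2}\Bigr].
\end{equation*}

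The crux is to show $R_n = O(\log^{-1} n)$. Under the $h$-ansatz, Theorem \ref{thmA} sharpens $f_n$ to $\zeta^{-1}(2)\log n + c_0 + O(n^{-1})$, so $c_0$ drops out of $\alpha_{n,k}$ and
\begin{equation*}
j\alpha_{n,j} + (n-j)\alpha_{n,n-j} = \zeta^{-1}(2)\,n\,\phi(j/n) + n/h_{n-1} + O(1),\qquad \phi(x) := x\log x + (1-x)\log(1-x).
\end{equation*}
The identity $\sum_j q(n,j)[j\alpha_{n,j}+(n-j)\alpha_{n,n-j}] = 0$---an immediate consequence of the defining recurrence $f_n = 1/h_{n-1} + \tfrac{2}{n}\sum_j q(n,j) j f_j$---forces the two $O(n/\log n)$ pieces above to cancel in the $q$-mean, consistent with $\int_0^1 \phi(x)/(x(1-x))\,dx = -2\zeta(2)$. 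For the second $q$-moment I would use the scaling $n\,q(n,\lfloor xn\rfloor) \sim (2h_{n-1}x(1-x))^{-1}$ together with the boundedness of $\phi$ on $(0,1)$ to obtain $\E_q[(j\alpha_{n,j}+(n-j)\alpha_{n,n-j})^2] = O(n^2/\log n)$, so the first piece of $R_n$ contributes $O(\log^{-1} n)$. The second piece is smaller still, of order $(n\log n)^{-1}$, via the classical $\int_0^1 \log^2(1-x)/x\,dx = 2\zeta(3)$. Combining, $R_n = O(\log^{-1} n)$.

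With this sharp estimate of $R_n$ the recursion closes by elementary induction: supposing $w_j \le K$ for all $2 \le j < n$, the recursion gives $w_n \le K(1 - h_{n-1}^{-1}) + C(\log n)^{-1} \le K$ for any $K > C$ once $n$ is large, with finitely many small-$n$ base cases (starting from the direct computation $w_2 = 1$) absorbed by enlarging $K$. Hence $w_n = O(1)$, and dividing by $\var(D_n) = \Theta(\log n)$ completes the proof. The main obstacle is the tight bound $R_n = O(\log^{-1} n)$: without the $h$-ansatz one has only $\alpha_{n,k} = \zeta^{-1}(2)\log(k/n) + O(1)$ with no structural control on the $O(1)$, and then $j\alpha_{n,j} + (n-j)\alpha_{n,n-j}$ carries an unreduced $O(n)$ error, inflating $R_n$ to $O(1)$ and the iterated recursion to $w_n = O(\log n)$---exactly the order at which the correlation fails to decay. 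The $h$-ansatz refines the $O(1)$ error in $f_n$ to a specific constant $c_0$ independent of $k$, and this cancellation is what buys the extra factor of $\log n$ that the theorem requires.
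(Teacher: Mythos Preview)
Your proposal is correct and follows essentially the same route as the paper. The paper derives the analogous recursion for $\E[Z_\nu]=\E[D_\nu^{(1)}D_\nu^{(2)}]$ (their equation (3.05), with the identical transition weights $\tfrac{(i-1)}{(\nu-1)h_{\nu-1}(\nu-i)}$ you obtain), then guesses an ansatz $\widetilde E(\nu)=A\log^2\nu+B\log\nu$ with $A=\zeta^{-2}(2)$, $B=2c_0\zeta^{-1}(2)$, and shows $|\E[Z_\nu]-\widetilde E(\nu)|=O(1)$ via the same recursive-inequality argument you use for $|w_n|$. Your choice to center by $f_n^2$ up front rather than guess $(A,B)$ is a slightly cleaner reorganization---the $h$-ansatz makes $f_n^2$ match $\widetilde E(\nu)$ to within $O(1)$, so the two computations are equivalent---and your identity $\sum_j q(n,j)[j\alpha_{n,j}+(n-j)\alpha_{n,n-j}]=0$ plays the role of the paper's determination of $A,B$. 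The crucial analytic input (that the inhomogeneous term is $O(\log^{-1}n)$ precisely because the $h$-ansatz removes the unstructured $O(1)$ from $f_k-f_n$) and the closing induction are the same in both.
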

\noindent We conjecture that a similar result holds for the correlation coefficient of $L_n^{(1)}$ and $L_n^{(2)}$, the 
edge-heights of two distinct, uniformly random leaves, {\it independently\/} of the $h$-ansatz.

Returning to properties of $D_n$ and $L_n$, in sections \ref{sec:Dnorm}  and  \ref{sec:Lnorm} we will prove  the CLTs corresponding to the means and variances in Theorems \ref{thmA} and \ref{thmB}.
\begin{theorem}\label{thmG} In distribution, and with all their moments, 
\[
\frac{D_n-\zeta^{-1}(2)\log n}{\sqrt{\tfrac{2\zeta(3)}{\zeta^3(2)}\log n}},\quad \frac{L_n-(2\zeta(2))^{-1}\log^2 n}{\sqrt{\tfrac{2\zeta(3)}{3\zeta^3(2)}\log^3 n}}\Longrightarrow 
\mathrm{Normal}(0,1) .
\]
\end{theorem}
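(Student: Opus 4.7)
My plan is to prove convergence of the moment-generating functions on an open neighborhood of $0$, which by Curtiss' theorem delivers both the distributional limit and convergence of all moments simultaneously. Set $\phi_n(s):=\E[e^{sD_n}]$ and $\psi_n(s):=\E[e^{sL_n}]$. Conditioning on the first split $(i,n-i)\sim q(n,\cdot)$, on which subtree contains the uniform random leaf (probabilities $i/n$ vs.\ $(n-i)/n$), and on the root holding time $T_n\sim\mathrm{Exp}(h_{n-1})$, and then using \eqref{01} together with the $i\leftrightarrow n-i$ symmetry of $q(n,\cdot)$, one obtains the recurrences
\begin{align*}
\phi_n(s) &= \frac{1}{h_{n-1}-s}\sum_{j=1}^{n-1}\frac{\phi_{n-j}(s)}{j}\quad(s<h_{n-1}), \\
\psi_n(s) &= \frac{e^{s}}{h_{n-1}}\sum_{j=1}^{n-1}\frac{\psi_{n-j}(s)}{j}\quad(s\in\mathbb{R}).
\end{align*}
By Theorems \ref{thmD} and \ref{thmE} these MGFs are finite for $|s|$ small, uniformly in $n$, so everything below is legitimate.

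For $D_n$, with $\mu_n=\zeta^{-1}(2)\log n$ and $\sig_n^2=\tfrac{2\zeta(3)}{\zeta^3(2)}\log n$, the task reduces to showing $e^{-\la\mu_n/\sig_n}\phi_n(\la/\sig_n)\to e^{\la^2/2}$ for every real $\la$. I would proceed by induction on $n$, sandwiching
\[
\exp\!\bigl(s\mu_n+\tfrac12 s^2\sig_n^2-r_n^-(s)\bigr)\le\phi_n(s)\le\exp\!\bigl(s\mu_n+\tfrac12 s^2\sig_n^2+r_n^+(s)\bigr)
\]
with errors $r_n^\pm(s)=o(1)$ uniformly for $|s|\le C/\sqrt{\log n}$. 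Plugging the ansatzes into the recurrence, using $\log\bigl((h_{n-1}-s)^{-1}\bigr)=-\log h_{n-1}+s/h_{n-1}+\tfrac12 s^2/h_{n-1}^2+O(s^3/h_{n-1}^3)$, and then integrating the resulting exponentials against the harmonic weights $1/j$, reduces the induction step to an asymptotic sum that, by the calculations already performed for Theorem \ref{thmA}, reproduces $s\mu_n+\tfrac12 s^2\sig_n^2$ up to $o(1)$. The $L_n$ case is structurally identical: $(h_{n-1}-s)^{-1}$ is replaced by $e^s/h_{n-1}$, and the unit per-split increment, summed over a random path of length $\Theta(\log n)$, yields the $\log^3 n$ variance scaling indicated in Theorem \ref{thmB}.

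The chief obstacle is uniform control of $r_n^\pm(s)$ across the rescaled range of $s$: one must simultaneously track the first-order term of size $O(\sqrt{\log n})$, the $O(1)$ Gaussian second-order term, and a third-order remainder small enough to be absorbed inductively, and these live at different $n$-scales but couple through the recurrence. Moreover the factor $(h_{n-1}-s)^{-1}$ has a pole at $s=h_{n-1}$, so the induction must preserve the invariant that $s$ stays well inside the radius of convergence at every smaller scale. A single recursive equality cannot cleanly decouple these effects; the matching-inequality scheme---upper and lower ansatzes sharing the same principal part, with absorbable slack---is precisely the device advertised in the paper's abstract, and is naturally compatible with Laplace transforms because the recurrence is monotone in $\phi_n$. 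Bootstrapping from the moment estimates of Theorems \ref{thmA} and \ref{thmB} supplies the base case, and closing the induction then yields MGF convergence on a neighborhood of $0$, whence Theorem \ref{thmG}.
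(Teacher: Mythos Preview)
Your plan is essentially the paper's own argument: Laplace transforms, the same recurrences, and sandwiching $\phi_\nu$ by comparison functions $\Psi_\nu(u)=\exp[(u\alpha_1+u^2\alpha_2)\log\nu]$ closed by induction on $\nu$. A few specifics you leave vague or slightly misstate are worth flagging against the paper's execution.

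The ``absorbable slack'' is implemented not by an additive error $r_n^\pm$ on top of the exact $\mu_n,\sigma_n^2$, but by perturbing the centering parameter itself: the paper takes $\alpha_2=\alpha_1^2\zeta(3)/\zeta(2)$ to kill the $u^2$ coefficient in the one-step expansion, and then sets $\alpha_1=\zeta^{-1}(2)\pm u^b$ with $b\in(1,2)$, which forces the ratio $\Psi_\nu(u)^{-1}(h_{\nu-1}-u)^{-1}\sum_k\Psi_k(u)/(\nu-k)$ strictly on one side of $1$ uniformly for $\nu\in[\nu_n,n]$. This avoids having to propagate a separate error sequence through the induction. The base case is \emph{not} supplied by Theorems~\ref{thmA}--\ref{thmB}; indeed the first-order variance in Theorem~\ref{thmA} is a \emph{consequence} of this CLT proof, so bootstrapping from it would be circular. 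Instead the small-$\nu$ regime $\nu\le\nu_n:=\lceil\exp(\log^\delta n)\rceil$ is handled directly by a crude comparison with $1+u\alpha\log\nu$, giving $\phi_\nu(u)=1+O(|u|\log\nu)$ there. Finally, the $L_n$ case is not quite ``structurally identical'': because the sign control in the Taylor expansion works cleanly only for $u>0$, the paper obtains MGF convergence only on a one-sided interval $(0,\sigma]$ and invokes the extension of Curtiss' lemma due to \cite{MRS,Yak} to conclude.
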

\noindent
The sharp asymptotic estimates of the moments of $D_n$ and $L_n$, and the ample numeric evidence in the case of $D_n$, provided a compelling evidence that both $D_n$ and $L_n$ must be asymptotically normal. However, the proof of Theorem \ref{thmG} does not use these estimates, providing instead an alternative verification of the {\it leading\/} terms in those estimates,  without relying on the $h$-ansatz.

After posting the original preprint version of this article, alternative proofs of these CLTs have appeared in preprints. 
Via a martingale CLT \cite{Ald2} (continuous model);
via the contraction method \cite{kolesnik}   (discrete model);
and via the theory of regenerative composition structures \cite{iksanovCLT} (discrete model).
 Presumably these methods can also be applied to the alternate model.
Of course, in Theorem \ref{thmG} there is presumably {\em joint convergence} to a bivariate Gaussian limit. 
It would be interesting to see which method would be best for proving such joint convergence.

Like Theorems \ref{thmD} and \ref{thmE}, the proof of Theorem \ref{thmG} is based on showing convergence of the Laplace transform  for the (properly centered and scaled) leaf height to that of $\mathrm{Normal}(0,1)$. Why Laplace, but not Fourier?  Because, even though there is enough independence to optimistically expect asymptotic normality, our variables 
are too far from being the sums of essentially independent terms. So, the best we could do is to use recurrences to bound
the (real-valued) Laplace transforms recursively both ways, by those of the Normals, whose parameters we choose to satisfy, asymptotically, the respective recursive inequalities. The added feature here is that we get convergence of the moments as well.  

Leaving Laplace versus Fourier issue aside, there are many cases when a limited moment information and the recursive nature of the process can be used to establish asymptotic normality, but the standard techniques hardly apply, see \cite{DFP}, \cite{MP}, \cite{Pit1}, \cite{Pit2}, \cite{Pit}, \cite{Pit3}. The concrete details vary substantially, of course. For instance,
in \cite{Pit} it was shown that the total number of linear extensions of the random, tree-induced, partial order is lognormal, by showing convergence of all semi-invariants, rather than of the Laplace transforms. In \cite{Pit3}, for the proof of a two-dimensional CLT for the number of vertices and arcs in the giant strong component of the random digraph, boundedness of the Fourier transform made it indispensable.
The  unifying feature of these diverse arguments is the recurrence equation for the chosen transform.

\begin{figure}[ht]
\setlength{\unitlength}{0.045in}
\begin{picture}(100,50)(7,0)
\multiput(0,0)(3,0){20}{\circle*{0.9}}
\put(-1.3,-0.5){[}
\put(57.7,-0.5){]}
\multiput(0,5)(3,0){20}{\circle*{0.9}}
\put(-1.3,4.5){[}
\put(6.7,4.5){]}
\put(7.7,4.5){[}
\put(57.7,4.5){]}
\multiput(0,10)(3,0){20}{\circle*{0.9}}
\put(-1.3,9.5){[}
\put(3.7,9.5){]}
\put(4.7,9.5){[}
\put(6.7,9.5){]}
\put(7.7,9.5){[}
\put(9.7,9.5){]}
\put(10.7,9.5){[}
\put(57.7,9.5){]}
\put(0,15){\circle*{0.9}}
\put(3,15){\circle{0.9}}
\put(-1.3,14.5){[}
\put(0.7,14.5){]}
\put(1.7,14.5){[}
\put(3.7,14.5){]}
\multiput(12,15)(3,0){16}{\circle*{0.9}}
\put(10.7,14.5){[}
\put(21.7,14.5){]}
\put(22.7,14.5){[}
\put(57.7,14.5){]}
\multiput(12,20)(3,0){16}{\circle*{0.9}}
\put(10.7,19.5){[}
\put(18.7,19.5){]}
\put(19.7,19.5){[}
\put(21.7,19.5){]}
\put(22.7,19.5){[}
\put(54.7,19.5){]}
\put(55.7,19.5){[}
\put(57.7,19.5){]}
\multiput(12,25)(3,0){2}{\circle*{0.9}}
\put(18,25){\circle{0.9}}
\put(10.7,24.5){[}
\put(15.7,24.5){]}
\put(16.7,24.5){[}
\put(18.7,24.5){]}
\multiput(24,25)(3,0){11}{\circle*{0.9}}
\put(22.7,24.5){[}
\put(48.7,24.5){]}
\put(49.7,24.5){[}
\put(54.7,24.5){]}
\put(12,30){\circle{0.9}}
\put(15,30){\circle*{0.9}}
\put(10.7,29.5){[}
\put(12.7,29.5){]}
\put(13.7,29.5){[}
\put(15.7,29.5){]}
\multiput(24,30)(3,0){10}{\circle*{0.9}}
\put(54,30){\circle{0.9}}
\put(22.7,29.5){[}
\put(36.7,29.5){]}
\put(37.7,29.5){[}
\put(48.7,29.5){]}
\put(49.7,29.5){[}
\put(51.7,29.5){]}
\put(52.7,29.5){[}
\put(54.7,29.5){]}
\multiput(24,35)(3,0){9}{\circle*{0.9}}
\put(22.7,34.5){[}
\put(30.7,34.5){]}
\put(31.7,34.5){[}
\put(36.7,34.5){]}
\put(37.7,34.5){[}
\put(39.7,34.5){]}
\put(40.7,34.5){[}
\put(48.7,34.5){]}
\multiput(24,40)(3,0){2}{\circle*{0.9}}
\put(30,40){\circle{0.9}}
\multiput(33,40)(3,0){2}{\circle*{0.9}}
\put(22.7,39.5){[}
\put(27.7,39.5){]}
\put(28.7,39.5){[}
\put(30.7,39.5){]}
\put(31.7,39.5){[}
\put(33.7,39.5){]}
\put(34.7,39.5){[}
\put(36.7,39.5){]}
\multiput(42,40)(3,0){3}{\circle*{0.9}}
\put(40.7,39.5){[}
\put(42.7,39.5){]}
\put(43.7,39.5){[}
\put(48.7,39.5){]}
\multiput(24,45)(3,0){2}{\circle*{0.9}}
\put(22.7,44.5){[}
\put(24.7,44.5){]}
\put(25.7,44.5){[}
\put(27.7,44.5){]}
\put(45,45){\circle{0.9}}
\put(48,45){\circle*{0.9}}
\put(43.7,44.5){[}
\put(45.7,44.5){]}
\put(46.7,44.5){[}
\put(48.7,44.5){]}
\put(66,15){\line(1,0){1.5}}
\put(66.75,15){\line(0,-1){5}}
\put(66.75,10){\line(1,0){2.25}}
\put(67.5,10){\line(0,-1){5}}

\put(82.5,10){\line(0,-1){5}}
\put(67.5,5){\line(1,0){15}}
\put(83.25,15){\line(0,-1){5}}
\put(70.5,10){\line(1,0){12.75}}
\put(74.25,20){\line(0,-1){5}}
\put(86.25,20){\line(0,-1){5}}
\put(74.25,15){\line(1,0){12}}
\put(73.5,25){\line(0,-1){5}}
\put(73.5,20){\line(1,0){3}}
\put(85.5,25){\line(0,-1){5}}
\put(85.5,20){\line(1,0){9}}
\put(72.75,30){\line(0,-1){5}}
\put(72.75,25){\line(1,0){2.5}}
\put(72,30){\line(1,0){1.5}}
\put(84,30){\line(0,-1){5}}
\put(91.5,30){\line(1,0){1.5}}
\put(92.25,30){\line(0,-1){5}}
\put(84,25){\line(1,0){8.25}}
\put(81,35){\line(0,-1){5}}
\put(87.75,35){\line(0,-1){5}}
\put(81,30){\line(1,0){6.75}}
\put(79.5,40){\line(0,-1){5}}
\put(83.25,40){\line(0,-1){5}}
\put(79.5,35){\line(1,0){3.75}}
\put(88.5,40){\line(0,-1){5}}
\put(85.5,35){\line(1,0){3}}
\put(78.75,45){\line(0,-1){5}}
\put(78.75,40){\line(1,0){2.25}}
\put(82.5,40){\line(1,0){1.5}}
\put(89.25,45){\line(0,-1){5}}
\put(87,40){\line(1,0){2.25}}
\put(78,45){\line(1,0){1.5}}
\put(88.5,45){\line(1,0){1.5}}
\put(66,15){\circle*{0.9}}
\put(67.5,15){\circle{0.9}}

\put(69,10){\circle*{0.9}}
\put(70.5,10){\circle*{0.9}}
\put(72,30){\circle{0.9}}
\put(73.5,30){\circle*{0.9}}
\put(75,25){\circle{0.9}}

\put(76.5,20){\circle*{0.9}}
\put(78,45){\circle*{0.9}}
\put(79.5,45){\circle*{0.9}}
\put(81,40){\circle{0.9}}
\put(82.5,40){\circle*{0.9}}
\put(84,40){\circle*{0.9}}

\put(85.5,35){\circle*{0.9}}
\put(87,40){\circle*{0.9}}
\put(88.5,45){\circle{0.9}}

\put(90,45){\circle*{0.9}}
\put(91.5,30){\circle*{0.9}}

\put(93,30){\circle{0.9}}
\put(94.5,20){\circle*{0.9}}
\put(80.25,5){\line(0,-1){5}}


\put(110.25,5){\line(0,-1){5}}
\put(97.5,5){\line(1,0){15}}
\put(97.5,5){\circle{0.9}}
\put(112.5,10){\line(0,-1){5}}
\put(113.25,15){\line(0,-1){5}}
\put(112.5,10){\line(1,0){0.75}}

\put(104.25,15){\line(1,0){12}}
\put(104.25,20){\line(0,-1){5}}
\put(103.5,25){\line(0,-1){5}}
\put(104.25,20){\line(-1,0){0.75}}
\put(101.75,25){\line(1,0){3.5}}
\put(105,25){\circle{0.9}}
\put(101.5,25){\circle{0.9}}

\put(116.25,20){\line(0,-1){5}}
\put(115.5,25){\line(0,-1){5}}
\put(116.25,20){\line(-1,0){0.75}}
\put(114,25){\line(1,0){8.25}}
\put(122.5,25){\circle{0.9}}
\put(114,30){\line(0,-1){5}}
\put(111,30){\line(1,0){6.75}}
\put(111,30){\circle{0.9}}
\put(117.75,30){\circle{0.9}}

\end{picture}
\caption{The left and center diagrams show $t = 6$ leaves $\circ$ in the $n = 20$-leaf tree in Figure \ref{Fig:1}.
The right diagram is the pruned spanning tree on those leaves, with $8$ edges.}
\label{Fig:1b}
\end{figure}
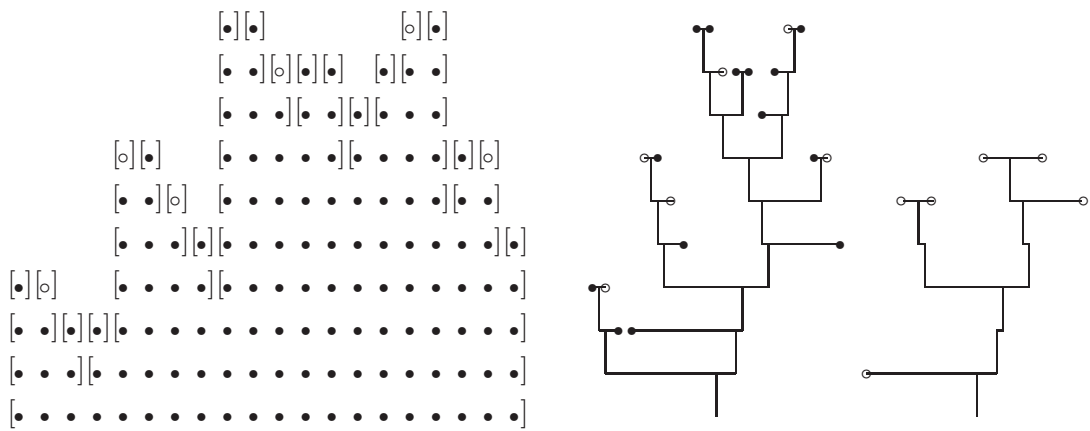

The structure theory studied in \cite{Ald2} involves the notion of {\em pruned spanning tree}, 
illustrated in Figure \ref{Fig:1b},
and here we study its edge-length.
Given a set $T$ of $t:= |T|  < n$ leaves of the tree on $n$ leaves, there is spanning tree on those leaves and the root; the edges of the spanning tree are the union of the 
edges on the paths to these leaves. 
Now we can ``prune" this spanning tree by cutting the end segment of each path back to the internal vertex $v$ where  it branches from the other paths;
the spanning tree on those branchpoints $v$ forms the pruned spanning tree.
Equivalently, the edges of the pruned spanning tree are the edges in the paths from the root to vertices $v$
such that {\it each\/} of the two subtrees rooted at $v$'s children has at least one leaf from $T$.
Write $S^*_{n,t}$ for the number of such edges, when $T$ is a uniform random choice of $t<n$ leaves. 
In section \ref{sec:pruned} we prove
\begin{theorem}\label{thmH} With $B(t_1,t_2)=\tfrac{\Gamma(t_1)\Gamma(t_2)}{\Gamma(t)}$, we have
\begin{equation*}
\E[S^*_{n,t}]=\a(t)\log n+O(1),\,\,\a(t)=\biggl(h_{t-1}-\sum_{t_1+t_2=t}B(t_1,t_2)\biggr)^{-1},
\end{equation*}
\end{theorem}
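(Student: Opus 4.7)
The plan is to condition on the first (root) split, derive a linear recursion for $s(n,t):=\E[S^*_{n,t}]$, and identify $\alpha(t)$ by asymptotic matching. The key combinatorial observation is that an edge $e$ with lower endpoint $w$ lies in the pruned spanning tree iff the subtree rooted at $w$ contains at least two leaves of $T$. Writing $I\sim q(n,\cdot)$ for the left-subtree size and $T_1,T_2=t-T_1$ for the hypergeometric numbers of chosen leaves on each side, one gets the clean decomposition
\[
S^*_{n,t}=\mathbf 1(T_1\ge 2)+\mathbf 1(T_2\ge 2)+S^*_{I,T_1}+S^*_{n-I,T_2}
\]
(with $S^*_{m,0}=S^*_{m,1}=0$). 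Taking expectations and exploiting left--right symmetry yields
\[
s(n,t)=2\P(T_1\ge 2)+2\sum_{t_1=2}^{t}p(t,t_1)\,\E[s(I,t_1)\mid T_1=t_1],
\]
where a Vandermonde identity applied to $q(n,i)\propto 1/(i(n-i))$ gives the exact marginal $p(t,t_1)=\frac{t}{2h_{n-1}t_1(t-t_1)}$ for $1\le t_1\le t-1$, and $p(t,t)=\tfrac12(1-h_{t-1}/h_{n-1})$.

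Next I would argue by strong induction on $t\ge 2$ with the ansatz $s(n,t)=\alpha(t)\log n+O(1)$, base case $s(n,1)=0$. The two required conditional-law facts about $I$ are: (a) given $T_1=t_1$ with $t_1,t_2\ge 1$, $I/n\Rightarrow \mathrm{Beta}(t_1,t_2)$, so that $\E[\log I\mid T_1=t_1]=\log n+\psi(t_1)-\psi(t)+O(1/n)$; and (b) in the degenerate case $T_1=t$, a direct calculation using the marginal above gives $\E[\log I\mid T_1=t]=\log n-\tfrac{\zeta(2)-h^{(2)}_{t-1}}{h_{n-1}-h_{t-1}}$. Substituting these into the recursion, the terms with $t_1\in[2,t-1]$ reduce to bounded constants (the $1/h_{n-1}$ factor in $p(t,t_1)$ cancels the $\log n$ inside $\E[\log I]$), while the self-referential $t_1=t$ term contributes $\alpha(t)\log n+\beta(t)-\alpha(t)h_{t-1}+o(1)$, thanks to the contraction factor $1-h_{t-1}/h_{n-1}$ in $2p(t,t)$. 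Matching the $O(1)$ parts of both sides produces a linear recurrence for $\alpha(t)$ in terms of $\alpha(t_1)$, $t_1<t$; using the Beta-function identity $\binom{t}{t_1}B(t_1,t_2)=t/(t_1t_2)$, one should recognize its solution as the claimed closed form $\alpha(t)=(h_{t-1}-\sum_{t_1+t_2=t}B(t_1,t_2))^{-1}$.

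The main obstacle is rigorous control of the $O(1)$ error across the induction on $t$. Following the paper's preferred strategy of bracketing an elusive exact recursion by a pair of recursive inequalities, I would construct upper and lower auxiliary sequences of the form $\alpha(t)\log n\pm C_t$ and close the induction by checking three points: (i) contributions from atypically small $I$ are negligible because $I/n$ concentrates away from $0$ whenever $T_1\in[2,t-1]$, giving polynomial tail control on the event $I\le n^{1/2}$; (ii) the conditional expectations $\E[\log I\mid T_1=t_1]$ admit uniform $O(1)$ bounds on their error relative to $\log n$, with the sharper $O(1/\log n)$ refinement needed for the self-referential case; and (iii) the contraction factor $1-h_{t-1}/h_{n-1}$ prevents the accumulated constant from amplifying across the induction on $t$---precisely the mechanism that produces the specific denominator $h_{t-1}-\sum B(t_1,t_2)$. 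Establishing (iii) in sharp enough form, so that the constant $C_t$ may be chosen uniformly in $n$ while propagating through a triangular recursion, is the most delicate technical step.
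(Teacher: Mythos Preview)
Your route differs substantively from the paper's, and the divergence matters for whether you actually reach the stated closed form.

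The paper does \emph{not} induct on $t$ and never passes through a recurrence for $\alpha(t)$ in terms of $\alpha(t_1)$, $t_1<t$. Instead it clears denominators by setting $\Psi_{n,t}:=(n-1)_{t-1}\E[S^*_{n,t}]$ and derives the exact recursion
\[
\Psi_{n,t}=(n-1)_{t-1}+\tfrac{1}{h_{n-1}}\sum_{t_1=2}^{t}\sum_{k}\tfrac{(n-k)_{t_2}}{n-k}\,\Psi_{k,t_1}.
\]
It then substitutes a \emph{single} ansatz $\Psi_{k,t_1}\approx \alpha\,k^{t_1-1}\log k$ (the same $\alpha$ for every $t_1$ simultaneously), expands the factor $(\nu-k)_{t_2}$ via signed Stirling numbers $s(t_2,j)$, evaluates the sums $\sum_k (\nu-k)^{j-1}k^{t_1-1}\log k$ by Beta integrals, and reads off the displayed closed form for $\alpha(t)$ directly as the unique value making the forcing term $O(\nu^{t-1}\log^{-1}\nu)$. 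The remaining bound $|V_{\nu,t}|=O(\nu^{t-1})$ is then handled exactly as in the proof of Proposition~\ref{thm11}. No conditional law of $I$ given $T_1$ is computed, no Beta-limit for $I/n$ is used, and no separate values $\alpha(t_1)$ enter.

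Your decomposition and the marginal $p(t,t_1)=\tfrac{t}{2h_{n-1}t_1t_2}$ are correct and elegant, but your plan lands on the recurrence
\[
\alpha(t)\,h_{t-1}=1+\sum_{t_1=2}^{t-1}\tfrac{t}{t_1t_2}\,\alpha(t_1)=1+\sum_{t_1=2}^{t-1}\binom{t}{t_1}B(t_1,t_2)\,\alpha(t_1),
\]
and the sentence ``one should recognize its solution as the claimed closed form'' is the step you have not done. It is not a routine check: try $t=3$. Your recurrence gives $\alpha(3)=\tfrac{5}{3}$, whereas the stated expression (under any natural reading of the summation range) does not. So either your recurrence and the paper's closed form encode slightly different objects---note that your two root-edge indicators contribute $2\P(T_1\ge 2)=1+O(1/\log n)$, not the paper's exact ``$+1$'', and more importantly your hypergeometric weight carries a $\binom{t}{t_1}$ that the paper's $\Psi$-recursion does not---or one of the two derivations drops a combinatorial factor. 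Until you reconcile this, the induction-on-$t$ strategy cannot close on the stated $\alpha(t)$; at minimum you must prove the algebraic identity linking your recurrence to the closed form, or else revert to the paper's device of a single ansatz in the $\Psi$-variable.
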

\noindent
along with a related result (Proposition \ref{thm11}) for the edge-height of the first branch-point in the pruned tree.
At long last, the Riemann zeta-function has suddenly loosened its grip, and appropriately the Beta-function has taken the stage. 

Finally in section \ref{sec:countingsubtrees} we prove
\begin{theorem}\label{thmt>2.0} 
Let $\mathbf{u}_n:=\{u_n(t)\}_{t\le n}$ be the distribution of the number of leaves in a subtree rooted at a uniform random vertex, that is one of the 
$2n-1$ leaves or branchpoints.
 The sequence $\{\mathbf{u}_n\}_{n\ge 1}$ converges
to a proper distribution $\mathbf{u}$. However 
$\sum_{t\ge 1}t u_n(t)\sim\tfrac{3}{2\pi^2}\log^2 n$.
\end{theorem}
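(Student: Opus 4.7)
\emph{Plan.} I would decompose Theorem~\ref{thmt>2.0} into (a) the mean asymptotic, (b) pointwise convergence $u_n(t)\to u(t)$, and (c) propriety $\sum_t u(t)=1$. Part (a) is immediate from Theorem~\ref{thmB}: double-counting pairs $(v,\ell)$ with $\ell$ a leaf descendant of the vertex $v$ gives
\[
\sum_v T_v \;=\; \sum_\ell (L_\ell+1),
\]
so dividing by $2n-1$ and taking expectations,
\[
\sum_{t\ge 1} t\,u_n(t)\;=\;\E[T_V]\;=\;\frac{n(\E[L_n]+1)}{2n-1}\;\sim\;\frac{\log^2 n}{4\zeta(2)}\;=\;\frac{3}{2\pi^2}\log^2 n.
\]

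For (b) and (c), write $a_n(t):=\E[N_n(t)]$ where $N_n(t)$ is the number of vertices with subtree size exactly $t$. Then $a_n(1)=n$, and for $t\ge 2$ conditioning on the root split together with $q(n,i)=q(n,n-i)$ gives the sum-type recurrence
\[
a_n(t)\;=\;\mathbf 1[n=t]\;+\;\frac{n}{h_{n-1}}\sum_{i=t}^{n-1}\frac{a_i(t)}{i(n-i)}.
\]
A cleaner probabilistic repackaging uses the size-biased descent chain $(X_k)$ of subtree sizes along the path from the root to a uniformly random leaf, with transition $\tilde q(m,i)=2iq(m,i)/m=1/(h_{m-1}(m-i))$. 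Since each size-$t$ subtree contains exactly $t$ leaves,
\[
a_n(t)\;=\;\frac{n}{t}\,\P_n\!\left(\exists\,k:X_k=t\right)\;=:\;\frac{n}{t}\,p_n(t),
\]
so $u_n(t)=a_n(t)/(2n-1)\to p_\infty(t)/(2t)=:u(t)$ once I establish $p_n(t)\to p_\infty(t)$ for each fixed $t$. This last convergence I would prove by applying the recurrence-inequality methodology of the paper to $p_n(t)=\frac{1}{h_{n-1}}\sum_{i=t}^{n-1}p_i(t)/(n-i)$, with the logarithmic drift $\zeta(2)/\log m$ and variance $2\zeta(3)/\log m$ of $\log X_k$---the very ingredients driving Theorems~\ref{thmA}--\ref{thmB}---controlling the rate.

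The main obstacle is propriety, equivalently the tightness $\limsup_n\sum_{t\ge T}u_n(t)\to 0$ as $T\to\infty$. Since distinct size-$t$ subtrees are disjoint, $a_n(t)\le n/t$ and $u_n(t)\le 1/(2t)$, which is not summable. To go further I would pass to the equivalent backward (fragmentation-type) recurrence
\[
\pi(m)\;=\;\sum_{k>m}\frac{k\,\pi(k)}{m(k-m)\,h_{k-1}},\qquad \pi(1)=1,
\]
obtained by integrating out the holding times, and bootstrap: inserting an ansatz $\pi(k)\asymp 1/(k^2(\log k)^a)$ with $a>0$ into the RHS and checking self-consistency exploits the extra logarithm supplied by $h_{k-1}$ to promote the a priori bound $\pi(k)\le C/k$ to a summable one. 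The delicate point is that this recurrence is marginally stable---pure polynomial decays are neutral at leading order, with $h_{k-1}$ supplying exactly one log of margin---so the bootstrap must track $1/\log$ corrections carefully, in the same spirit as the sum-type recurrence analyses used for Theorems~\ref{thmA} and~\ref{thmB}. Once the summable tail bound is in place uniformly in $n$, pointwise convergence plus dominated convergence give $\sum_t u(t)=\lim_n \sum_t u_n(t)=1$.
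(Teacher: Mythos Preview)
Your part (a) is exactly the paper's argument and is correct.

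Parts (b) and (c), however, take a substantially harder route than the paper, and the plan as written has a real gap. For tightness you propose to bootstrap a backward recurrence for $\pi(\cdot)$ to a summable decay $\pi(k)\asymp k^{-2}(\log k)^{-a}$; this is speculative and, as you yourself note, ``marginally stable''. The paper sidesteps this entirely with a one-line induction: setting $\xi_n(t):=\tfrac{\E[X_n(t)]}{n}$ and $\eta_n(t):=\sum_{\tau\ge t}\xi_n(\tau)$, the same recurrence $\eta_n(t)=\tfrac{1}{h_{n-1}}\sum_{j=t}^{n-1}\tfrac{\eta_j(t)}{n-j}$ together with $\eta_t(t)=1/t$ yields $\eta_n(t)\le 2/t$ for all $n\ge t$, because $\tfrac{2}{t}\cdot\tfrac{h_{n+1-t}}{h_n}\le\tfrac{2}{t}$. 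That is tightness, uniformly in $n$, with no bootstrap needed.

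For pointwise convergence your plan is to show $p_n(t)\to p_\infty(t)$ via the recurrence-inequality machinery. But that machinery, as deployed elsewhere in the paper, produces two-sided \emph{bounds} (e.g.\ $\xi_n(t)\in[t^{-2},(th_t)^{-1}]$), not convergence; indeed, the paper explicitly states that monotonicity of $\xi_n(t)$ in $n$---which would give your $p_n(t)\to p_\infty(t)$ immediately---is only a numerically supported conjecture. The paper instead proves convergence at the level of the probability generating function: $F_n(q):=\sum_{t\ge1}q^t\xi_n(t)$ satisfies the same sum-type recurrence $F_n(q)=\tfrac{1}{h_{n-1}}\sum_{j=1}^{n-1}\tfrac{F_j(q)}{n-j}$, and a short computation shows $F_{n+1}(q)\le F_n(q)$ for every $q\in(0,1)$. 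Monotone generating functions plus the tightness from the previous paragraph force convergence of the full sequence $\{u_n(\cdot)\}$ to a proper limit. This generating-function monotonicity is the key idea your proposal is missing; it replaces both your (b) and (c) with a few lines.
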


\subsection{Motivation and background} 
\label{sec:motiv}

The one-parameter family at (\ref{beta}) was introduced in  \cite{Ald1} in 1996 as a toy model for phylogenetic trees.
It was observed in \cite{me_yule} that, in splits $m \to (i,m-i)$ in real-world phylogenetic trees, 
the median size of the smaller subtree scaled roughly as $m^{1/2}$.
This data is not consistent with more classical random tree models, where the median size would be 
$O(\log m)$ or $\Theta(m)$.
However, in the ``critical" case $\beta  = -1$ of the model studied in this paper, 
that median size is indeed order $m^{1/2}$, because 
\[
2 \sum_{i < m^{1/2}} q(m,i) \to \tfrac{1}{2} \mbox{ as } m \to \infty .
\]
Of course the model is not a biologically meaningful ``forwards in time" model, but is
a mathematically basic model that could be used for comparison with more realistic models \cite{lambert} that are deliberately constructed 
in the mathematical biology literature to reproduce features of real phylogenetic trees.
The distributional results of this model might therefore be useful for some future ``applied" project of that kind.

\section{The proofs}
 Let $\tau_{\nu}$ be the holding time before a split  of a subset of size $\nu$. So $\tau_{\nu}$ has Exponential distribution with rate $h_{\nu-1}$. By the definition of the splitting
process, for $\nu\ge 2$ we have: with $q(\nu,i)=\tfrac{\nu}{2h_{\nu-1}}\,\tfrac{1}{i(\nu-i)}$ as at \eqref{01},
\[
D_{\nu}=\left\{\begin{aligned}
&\tau_{\nu}+D_i,&&\text{with probability }q(\nu,i)\tfrac{i}{\nu},\,\,i=1,\dots,\nu-1,\\
&\tau_{\nu}+D_{\nu-i},&&\text{with probability }q(\nu,i)\tfrac{\nu-i}{\nu},\,\,i=1,\dots,\nu-1. \end{aligned}\right.
\]
Introduce $\phi_{\nu}(u)=\E[e^{u D_{\nu}}]$, the Laplace transform of the distribution of $D_{\nu}$; so, $\phi_1(u)=1$. 
The equation above implies that for $\nu\ge 2$,
\begin{multline}\label{2}
\phi_{\nu}(u)\!=\!\sum_{k=1}^{\nu-1}\!q(\nu,k)\Bigl(\!\tfrac{k}{\nu}\E[\exp(u(\tau_{\nu}+D_k))]+\tfrac{\nu-k}{\nu}\E[\exp(u(\tau_{\nu}+D_{\nu-k}))]\!\Bigr)\\
=2 \E \bigl[\exp(u\tau_{\nu})\bigr]\sum_{k=1}^{\nu-1}\tfrac{k}{\nu}\phi_k(u)\,q_{\nu,k}
=\tfrac{1}{h_{\nu-1}-u}\sum_{k=1}^{\nu-1}\tfrac{\phi_k(u)}{\nu-k}.
\end{multline}
Furthermore, introduce $f_{\nu}(u)=\E[e^{u L_{\nu}}]$, the Laplace transform of the distribution of $L_{\nu}$; so $f_1(u)=1$. In this case we have: for $\nu\ge 2$,
\[
L_{\nu}=\left\{\begin{aligned}
&1+L_i,&&\text{with probability }q(\nu,i)\tfrac{i}{\nu},\,\,i=1,\dots,\nu-1,\\
&1+L_{\nu-i},&&\text{with probability }q(\nu,i)\tfrac{\nu-i}{\nu},\,\,i=1,\dots,\nu-1.\end{aligned}\right.
\]
Therefore
\begin{multline}\label{2.2}
f_{\nu}(u)\!=\!\sum_{k=1}^{\nu-1}\!q(\nu,k)\Bigl(\!\tfrac{k}{\nu}\E[\exp(u(1+L_k))]+\tfrac{\nu-k}{\nu}\E[\exp(u(1+L_{\nu-k}))]\!\Bigr)\\
=2e^u\sum_{k=1}^{\nu-1}\tfrac{k}{\nu}f_k(u)\,q_{\nu,k}
=\tfrac{e^u}{h_{\nu-1}}\sum_{k=1}^{\nu-1}\tfrac{f_k(u)}{\nu-k}.
\end{multline}

In particular, we make extensive use of the following
 fundamental recurrence  for $\E[D_{\nu}]$: 
\begin{equation}\label{2.01}
\E[D_{\nu}]=\tfrac{1}{h_{\nu-1}}\biggl(1+\sum_{k=1}^{\nu-1}\tfrac{\E[D_k]}{\nu-k}\biggr).                            
\end{equation}
This follows directly from the hold-jump construction of the random tree, or by differentiating both sides of \eqref{2} at $u=0$.

\subsection{The moments of $D_n$.}
\label{sec:Dmoments}
Our first result includes one part of Theorem \ref{thmA}.
\begin{proposition}\label{1.5}
\begin{eqnarray*}
&&\zeta^{-1}(2)\log n\le \E[D_n]\le \max\{0,1+\log(n-1)\},\,\,n\ge 2,\\
&&\qquad\qquad\quad\E[D_n]=\zeta^{-1}(2)\log n +O(1).
\end{eqnarray*}
\end{proposition}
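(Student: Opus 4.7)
The proof rests on the recurrence \eqref{2.01} for $a_n:=\E[D_n]$ (with $a_1=0$). The technical backbone is the combinatorial identity
\[
\sum_{k=1}^{n-1}\frac{\log k}{n-k}=h_{n-1}\log n-T_n,\qquad T_n:=-\sum_{j=1}^{n-1}\frac{\log(1-j/n)}{j},
\]
obtained by the substitution $j=n-k$ together with $\log(n-j)=\log n+\log(1-j/n)$. The crucial auxiliary bound is $T_n\le\zeta(2)$ for every $n\ge 2$: expanding $-\log(1-j/n)=\sum_{r\ge 1}(j/n)^r/r$ and exchanging the order of summation gives $T_n=\sum_{r\ge 1}(rn^r)^{-1}\sum_{j=1}^{n-1}j^{r-1}$, and the elementary inequality $\sum_{j=1}^{n-1}j^{r-1}\le n^r/r$ then yields $T_n\le\sum_{r\ge 1}r^{-2}=\zeta(2)$.

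With these in hand, the two one-sided bounds follow by strong induction on $n$ (base case $n=2$: $a_2=1$). Assuming $a_k\ge \zeta^{-1}(2)\log k$ for $k<n$, insertion into \eqref{2.01} together with the identity gives
\[
a_n\ge \zeta^{-1}(2)\log n+\frac{1-\zeta^{-1}(2)T_n}{h_{n-1}}\ge \zeta^{-1}(2)\log n,
\]
the last step by $T_n\le\zeta(2)$. For the crude upper bound $a_k\le 1+\log(k-1)$, a parallel calculation reduces the inductive step to the elementary inequality $1-T_{n-1}\le (1+\log(n-1))/(n-1)$, which is automatic for $n$ large (because $T_n\to\zeta(2)>1$) and is verified by direct computation at the handful of small $n$ where $T_{n-1}<1$.

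The main difficulty lies in the sharp statement $a_n=\zeta^{-1}(2)\log n+O(1)$, as a direct induction with a constant-upper-bound ansatz does not close. Setting $r_n:=a_n-\zeta^{-1}(2)\log n\ge 0$, subtraction of the leading term gives the perturbed recurrence
\[
r_n h_{n-1}=\zeta^{-1}(2)\bigl(\zeta(2)-T_n\bigr)+\sum_{k=1}^{n-1}\frac{r_k}{n-k}.
\]
The quantitative estimate $\zeta(2)-T_n=O((\log n)/n)$—derivable, for instance, from the dilogarithm identity $\mathrm{Li}_2(x)+\mathrm{Li}_2(1-x)=\zeta(2)-\log x\log(1-x)$ applied at $x=1/n$—shows the forcing decays, but the self-convolution prevents a one-shot induction. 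I would close the argument via the probabilistic representation $r_n=\zeta^{-1}(2)\,\E_n\bigl[\sum_{k\ge 0}(\zeta(2)-T_{X_k})/h_{X_k-1}\bigr]$, where $(X_k)$ is the Markov chain of ancestor-interval sizes for a uniform random leaf. This reduces the matter to bounding $\E_n[\sum_k 1/X_k]=O(1)$, a quantity satisfying a recurrence of exactly the shape of \eqref{2.01} but with the constant forcing $1$ replaced by $h_{n-1}/n\to 0$—decaying rapidly enough to permit a second, analogous inductive argument, now closing with a genuinely constant ansatz.
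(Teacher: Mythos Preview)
Your lower-bound argument matches the paper's step (i); your power-series proof of $T_n\le\zeta(2)$ is equivalent to its integral comparison. For the upper bound the paper instead uses a concavity/Jensen argument with the majorant $g(x)=x-1$ on $[1,2]$, $g(x)=1+\log(x-1)$ thereafter, which closes the recursive inequality uniformly without the small-$n$ case checks that your reduction $1-T_{n-1}\le(1+\log(n-1))/(n-1)$ requires. Both routes are valid.

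There is a genuine gap in the $O(1)$ step. The probabilistic representation and the reduction to $S_n:=\E_n\bigl[\sum_k 1/X_k\bigr]$ are correct, but a bare constant ansatz does \emph{not} close: assuming $S_i\le C$ for $i<n$ gives only $S_n\le 1/n+C$. What does close is $S_n\le C(1-1/n)$, since $1/n + C - 2C/n \le C - C/n$ iff $C\ge 1$; with $C=1$ this is exactly the paper's key observation that $x_n:=(n-1)/n$ satisfies $x_n=\tfrac{1}{n}+\tfrac{1}{h_{n-1}}\sum_{k=1}^{n-1}\tfrac{x_k}{n-k}$ exactly. So your approach and the paper's converge at the decisive point, but you have not yet identified the comparison sequence that makes the induction go through. (Your dilogarithm route to $\zeta(2)-T_n=O((\log n)/n)$ is also a detour: a Riemann-sum comparison already gives this, and the paper obtains the sharper $O(1/n)$ via Euler--Maclaurin; either estimate suffices once the right comparison sequence is in hand.)
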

\begin{proof} 
The proof has three steps.

{\bf (i)\/} Let us prove that $\E[D_n]\ge \tfrac{6}{\pi^2}\log n$. Introduce $\theta_n=A\log n$. Then $\E[D_1]=0=\theta_1$. If we find $A$ such that
\begin{equation}\label{new2}
\theta_n\le  \frac{1}{h_{n-1}}\biggl(1+\sum_{k=1}^{n-1}\tfrac{\theta_k}{n-k}\biggr),\quad n\ge 2,
\end{equation}
then, by induction on $n$, $\E[D_n]\ge \theta_n$ for all $n\ge 1$. We compute
\begin{multline*}
 \frac{1}{h_{n-1}}\biggl(1+\sum_{k=1}^{n-1}\tfrac{\theta_k}{n-k}\biggr)=\frac{1}{h_{n-1}}\biggl(1+\sum_{k=1}^{n-1}\tfrac{A\log k}{n-k}\biggr)\\
 =\frac{1}{h_{n-1}}\biggl(1+A(\log n) h_{n-1}+A\sum_{k=1}^{n-1}\tfrac{\log(k/n)}{n(1-k/n)}\biggr)\\
=\theta_n+\tfrac{1}{h_{n-1}}\biggl(1+A\sum_{k=1}^{n-1}\tfrac{\log(k/n)}{n(1-k/n)}\biggr)\\
\ge \theta_n+\tfrac{1}{h_{n-1}}\biggl(1-A\int_0^1\tfrac{\log(1/x)}{1-x}\,dx\biggr).
\end{multline*}
The inequality holds since the integrand is positive and decreasing.  Since  
\[
\int_0^1\tfrac{\log(1/x)}{1-x}\,dx=\sum_{j\ge 0}\int_0^1x^j \log(1/x)\,dx=\sum_{j\ge 0}\tfrac{1}{(j+1)^2}=\zeta(2)=\tfrac{\pi^2}{6},
\]
we deduce that \eqref{new2} holds if we select $A=\tfrac{6}{\pi^2}=\zeta^{-1}(2)$.

\noindent{\bf Note.\/} The proof above is the harbinger of things to come, including the next part. The seemingly naive idea is to replace a recurrence equality by a recurrence {\it inequality\/} for which an exact solution can be found and then to use it to upper bound the otherwise-unattainable solution of the recurrence {\it equality\/}. Needless to say, it is critically important to have a good guess as to how that ``hidden'' solution behaves asymptotically.

{\bf (ii)\/} Let us prove that 
$\E[D_n]\le f(n):= \max\{0,1+\log(n-1)\}$ for $n\ge 2$. 
This is true for $n=1, 2$ since $\E[D_1]=0$, $\E[D_2]=1$.
Notice that $1+\log(x-1)\le x-1$ for $x\in (1,2]$. So $f(x)\le g(x)$, $\forall\,x>1$,
where $g(x)=x-1$ for $x\in [1,2]$, $g(x)=1+\log(x-1)$ for $x\ge 2$, {\it\/} and $g(x)$ is concave for $x\ge 1$.
So, similarly to \eqref{new2}, it is enough to show that  $g(n)$ satisfies  
\begin{equation}\label{new2.01}
g(n)\ge \frac{1}{h_{n-1}}\biggl(1+\sum_{i=1}^{n-1}\tfrac{g(i)}{n-i}\biggr),\quad n\ge 2.
\end{equation}
By concavity of $g(x)$ for $x\ge 1$, we have
\begin{multline*}
 \frac{1}{h_{n-1}}\biggl(\,1+\sum_{i=1}^{n-1}\tfrac{g(i)}{n-i}\biggr)\le \tfrac{1}{h_{n-1}}+g\biggl(\sum_{i=1}^{n-1}\tfrac{i}{n-i}\biggr)\\
 =\tfrac{1}{h_{n-1}}+g\bigl(n-\tfrac{n-1}{h_{n-1}}\bigr)\le \tfrac{1}{h_{n-1}}+g(n)-g'(n)\bigl(\tfrac{n-1}{h_{n-1}}\bigr),\\
 \end{multline*}
 which is exactly $g(n)$, since $g'(n)=\tfrac{1}{n-1}$ for $n>1$.

 {\bf (iii)\/}
Write $\E[D_n]=\tfrac{6}{\pi^2}\log n + u_n$, so that $u_n\ge 0$ and $u_1=0$. Let us prove that $u_n=O(1)$.  Using \eqref{2.01} we have
\begin{multline}\label{2.02}
u_n=\tfrac{1}{h_{n-1}}\biggl(1+\sum_{k=1}^{n-1}\tfrac{u_k}{n-k}\biggr)+\tfrac{6}{\pi^2}\biggl(\tfrac{1}{h_{n-1}}\sum_{k=1}^{n-1}\tfrac{\log k}{n-k}-\log n\biggr)\\
=\tfrac{1}{h_{n-1}}\biggl(1+\sum_{k=1}^{n-1}\tfrac{u_k}{n-k}\biggr)+\tfrac{6}{\pi^2 h_{n-1}}\sum_{k=1}^{n-1}\tfrac{\log(k/n)}{n-k}.
\end{multline} 
\noindent
The proof of {\bf (iii)\/} depends on the following rather sharp asymptotic formula for the last sum, which we believe to be new.
We defer the proof of the lemma.
\begin{Lemma}
\label{L:nk}
\[\sum_{k=1}^{n-1}\tfrac{\log(k/n)}{n-k}=-\zeta(2) +\tfrac{\log(2\pi e)}{2n}+\tfrac{\log n}{12 n^2} +O(n^{-2}). \]
\end{Lemma}
\noindent
Granted this estimate, the recurrence \eqref{2.02} becomes
\begin{equation}\label{2.035}
\begin{aligned}
u_n&=\tfrac{\zeta^{-1}(2)}{h_{n-1}}\bigl(\tfrac{\log(2\pi en)}{2n}+\tfrac{\log n}{12 n^2}+O(n^{-2})\bigr)\\
&\quad\quad+\tfrac{1}{h_{n-1}}\sum_{k=1}^{n-1}\tfrac{u_k}{n-k},\,\,n\ge 2,\,\, u_1=0.
\end{aligned}
\end{equation}
It is  easy to check that the sequence $x_n:=\tfrac{n-1}{n}$ satisfies the recurrence
\[
x_n=\tfrac{1}{n}+\tfrac{1}{h_{n-1}}\sum_{k=1}^{n-1}\tfrac{x_k}{n-k},\quad n\ge 2,\quad x_1=0.
\]
As the explicit term on the RHS of \eqref{2.035} is asymptotic to $\tfrac{\zeta^{-1}(2)}{2n}$, we can deduce that $u_n=O(1)$, establishing  {\bf (iii)\/}.
Indeed, by the triangle inequality, the equation \eqref{2.035} implies that 
\[
|u_n|\le \tfrac{c}{n}+\tfrac{1}{h_{n-1}}\sum_{k=1}^{n-1}\tfrac{|u_k|}{n-k}.
\]
By induction on $n$, this inequality coupled with the recurrence for $x_n$ imply that $|u_n|\le 2c x_n\le 2c$.
\end{proof}

\noindent
{\em Proof of Lemma \ref{L:nk}.}

First, we have: for $n\ge 2$ 
\begin{multline}\label{2.03}
\sum_{k=1}^{n-1}\tfrac{\log(k/n)}{n-k}=\sum_{k=1}^{n-1}\biggl(\tfrac{\log(k/n)}{n}+\tfrac{k\log(k/n)}{n(n-k)}\biggr)
=\tfrac{1}{n}\log\tfrac{(n-1)!}{n^{n-1}}+\sum_{k=1}^{n-1}\tfrac{(k/n)\log(k/n)}{n-k}.\\
\end{multline}
By Euler's summation formula (Graham, Knuth, and Patashnik \cite{GKP}, (9.78)), if $f(x)$ is a smooth differentiable function
for $x\in [a,b]$ such that the even derivatives are all of the same sign, then for every $m\ge 1$
\begin{multline}\label{2.031}
\sum_{a\le k<b}\!f(k)\!=\!\int_a^b\!f(x)\,dx-\tfrac{1}{2}f(x)\Big|_{a}^{b}
+\sum_{\ell=1}^m 
\tfrac{B_{2\ell}}{(2\ell)!} f^{(2\ell-1)}(x)\big|_{a}^b+\theta_m\tfrac{B_{2m+2}}{(2m+2)!} f^{(2m+1)}(x)\big|_{a}^b.\\
\end{multline}
Here $\theta_m\in (0,1)$ and $\{B_{2\ell}\}$ are even Bernoulli numbers, defined by $\tfrac{z}{e^z-1}=\sum_{\mu\ge 0}B_{\mu}\tfrac{z^{\mu}}{\mu!}$. 
The equation \eqref{2.031} was used in \cite{GKP} to show that 
\begin{multline*}%
\sum_{1\le k<n}\log k=n\log n-n+\tfrac{1}{2}\log\tfrac{2\pi}{n}
+\sum_{\ell=1}^m\tfrac{B_{2\ell}}{2\ell(2\ell-1)n^{2\ell-1}}+\theta_{m,n}\tfrac{B_{2m+2}}{(2m+2)(2m+1)n^{2m+1}},\\
\end{multline*}
$\theta_{m,n}\in (0,1)$. Here $f(x)=\log x$, so that $f^{(2\ell)}(x)< 0$ for $x\ge 1$ and $\ell\ge 1$. Using this estimate for $m=1$,
we obtain a sharp version of Stirling's formula:
\begin{equation}\label{2.032}
\tfrac{1}{n}\log\tfrac{(n-1)!}{n^{n-1}}=-1+\tfrac{\log(2\pi n)}{2n}+O(n^{-2}).
\end{equation}

Consider the sum in the bottom RHS of \eqref{2.03}.
This time, take $f(x)=\tfrac{(x/n)\log(x/n)}{n-x}$, $x\in [1,n-1]$, and $f(n):=-\tfrac{1}{n}$.
 Let us show that $f^{(2\ell)}(x)>0$ for $x\in (0,n)$, or
equivalently that  $g^{(2\ell)}(y)>0$ for $y\in (0,1)$, where $g(y):=\tfrac{y \log y}{1-y}$. We have
\begin{multline*}
g(y)=-\log y+\tfrac{\log y}{1-y}=-\log y-\sum_{j\ge 1}\tfrac{(1-y)^{j-1}}{j}
=-\log(1-z)-\sum_{j\ge 1}\tfrac{z^{j-1}}{j}, \quad z:=1-y.\\
\end{multline*}
So, we need to show that
\[
\bigl(-\log(1-z)\bigr)^{(2\ell)}\ge \biggl(\sum_{j\ge 1}\tfrac{z^{j-1}}{j}\biggr)^{(2\ell)},
\]
or equivalently that 
\[
\tfrac{(2\ell-1)!}{(1-z)^{2\ell}}\ge \sum_{j>2\ell}\tfrac{(j-1)_{2\ell}\,z^{j-1-2\ell}}{j}.
\]
This inequality will follow if we prove a stronger inequality\footnote{$[z^\nu]$ denotes the coefficient of $z^\nu$.}, namely that for every $\nu\ge 0$
\[
[z^{\nu}]\tfrac{(2\ell-1)!}{(1-z)^{2\ell}}\ge [z^{\nu}] \sum_{j>2\ell}\tfrac{(j-1)_{2\ell}\,z^{j-1-2\ell}}{j}.
\]
But this is equivalent to
\[
(2\ell+\nu-1)!\ge \tfrac{(2\ell+\nu)!}{2\ell+\nu+1},
\]
which is obviously true. Therefore, applying \eqref{2.031}, we have: with $\theta'_{m,n}\in (0,1)$,
\begin{multline}\label{2.033}
\sum_{k=1}^{n-1}\tfrac{(k/n)\log(k/n)}{n-k}=\int_{1/n}^1g(y)\,dy-\tfrac{1}{2n} g(y)\big|_{1/n}^1\\
+\sum_{\ell=1}^m\tfrac{B_{2\ell}}{n^{2\ell}(2\ell)!}\,  g^{(2\ell-1)}(y)\big|_{1/n}^1+\theta'_{m,n}\tfrac{B_{2m+2}}{n^{2m+2}(2m+2)!}\, g^{(2m+1)}(y)\big|_{1/n}^1.
\end{multline}
For the first terms in \eqref{2.033}
\begin{align*}
\int_{1/n}^1g(y)\,dy&=\int_0^1\tfrac{y\log y}{1-y}\,dy-\sum_{j\ge 1}\int_0^{1/n}y^j\log y\,dy\\
&=-\zeta(2)+1+(\log n)\sum_{j\ge 2}n^{-j}j^{-1}+\sum_{j\ge 2}n^{-j} j^{-2};\\
g(y)\big|_{1/n}^1&=-1+\tfrac{\log n}{n-1}.\\
\end{align*}
The integrals were evaluated using the more general identities \eqref{zzz} and \eqref{2.495} later.

For the next term in  \eqref{2.033} we need $g^{(2\ell-1)}(y)\big|_{1/n}^1$. 
 We use the Newton-Leibniz formula and evaluate 
$g^{(2\ell-1)}(1/n)$ and $g^{(2\ell-1)}(1)$ using respectively
\begin{align*}
g^{(2\ell-1)}(y)&=\sum_{j=0}^{2\ell-1}\binom{2\ell-1}{j}(\log y)^{(j)}\bigl(\tfrac {y}{1-y}\bigr)^{(2\ell-1-j)},\\
g^{(2\ell-1)}(y)&=\sum_{j=0}^{2\ell-1}\binom{2\ell-1}{j}y^{(j)}\biggl(\tfrac{\log y}{1-y}\biggr)^{(2\ell-1-j)} .
\end{align*}
In the second sum there are only two non-zero terms, for $j=0$ and $j=1$, and using $\tfrac{\log y}{1-y}=-\sum_{j\ge 1}\tfrac{(1-y)^{j-1}}{j}$ we obtain, with some work, that
\[
g^{(2\ell-1)}(1)=-\tfrac{(2\ell-2)!}{2\ell}.
\]
For $g^{(2\ell-1)}(1/n)$, we use $\bigl(\tfrac {y}{1-y}\bigr)^{(\mu)}=\bigl(\tfrac{1}{1-y}\bigr)^{(\mu)}$ for $\mu>0$, and after some more protracted work we obtain 
\[
g^{(2\ell-1)}(1/n)=-(\log n)\tfrac{(2\ell-1)!}{(1-n^{-1})^{2\ell}}+
\sum_{j=1}^{2\ell-2}n^j\cdot\tfrac{(2\ell-1)_j}{j(1-n^{-1})^{2\ell-j}}+n^{2\ell-2}\cdot\tfrac{(2\ell-2)!}{1-1/n}.
\]
Therefore
\begin{align*}
g^{(2\ell-1)}(y)\big|_{1/n}^1&=-\tfrac{(2\ell-2)!}{2\ell}+(\log n)\tfrac{(2\ell-1)!}{(1-n^{-1})^{2\ell}}\\
&\quad-\sum_{j=1}^{2\ell-2}n^j\cdot\tfrac{(2\ell-1)_j}{j(1-n^{-1})^{2\ell-j}}-n^{2\ell-2}\cdot \tfrac{(2\ell-2)!}{1-1/n}.
\end{align*}
This term enters the RHS of \eqref{2.033} with the factor $n^{-2\ell}$, making the product of order $n^{-2}$ regardless of $m\ge 1$. And the remainder term in \eqref{2.033} is of order $n^{-2}$, again independently of $m\ge 1$. So we choose
the simplest $m=1$. Collecting all the pieces we transform \eqref{2.033} into 
\begin{equation}\label{2.034}
\sum_{k=1}^{n-1}\tfrac{(k/n)\log(k/n)}{n-k}=-\zeta(2)+1+\tfrac{1}{2n}+\tfrac{\log n}{12n^2}+O(n^{-2}).
\end{equation}
\noindent
So, combining \eqref{2.03}, \eqref{2.032}, and \eqref{2.034}, we have
\[
\sum_{k=1}^{n-1}\tfrac{\log(k/n)}{n-k}=-\zeta(2)+\tfrac{\log(2\pi e)}{2n}+\tfrac{\log n}{12 n^2} +O(n^{-2})
\]
which is the assertion  of Lemma \ref{L:nk}.

This completes the proof of Proposition \ref{1.5}.

\subsection{An ansatz for sharper results}
\label{sec:ansatz}
Knowing that  $\E[D_n]=\zeta^{-1}(2)\log n+O(1)$, it seems natural to seek more refined estimates  by imagining that
\[ \E[D_n] = \zeta^{-1}(2)\log n +\sum_{j\ge 0}c_j n^{-j} \]
{\it almost \/} satisfies the recurrence, and then calculating $c_j$.
Let us call this the {\em h-ansatz}, being analogous to a known expansion for $h_n$.
So to use this ansatz we write
\[ w_n:=\sum_{j\ge 0}c_j n^{-j} \]
and seek to identify the $c_j$ from the recurrence \eqref{2.035}, which we re-write as follows.
\begin{equation}\label{2.515}
\begin{aligned}
w_n&=\tfrac{d_1\log n}{nh_{n-1}}+\tfrac{d_2}{nh_{n-1}}+\tfrac{1}{h_{n-1}}\sum_{k=2}^{n-1}\tfrac{w_k}{n-k},\quad n\ge 2,\\
d_1&=\tfrac{\zeta^{-1}(2)}{2},\quad d_2=\tfrac{\zeta^{-1}(2)}{2}\log(2\pi e).
\end{aligned}
\end{equation} 
Here 
\[
\tfrac{\log n}{h_{n-1}}=1-\tfrac{\ga}{\log n}+O(\log^{-2}n),
\]
where 
\begin{equation}\label{2.5155}
\gamma:=1-\sum_{j=2}^{\infty}\tfrac{\zeta(j)-1}{j}\approx 0.5772156649,
\end{equation}
is the Euler-Masceroni constant coming from $h_{\nu}=\log\nu+\ga+O(\nu^{-1})$, \cite{GKP}. For $n\ge 3$, using $\tfrac{1}{k(n-k)}=n^{-1}\bigl(\tfrac{1}{k}+\tfrac{1}{n-k}\bigr)$, we have
\begin{align*}
\sum_{k=2}^{n-1}\tfrac{w_k}{n-k}&=\sum_{j\ge 0}c_j\sum_{k=2}^{n-1}\tfrac{1}{k^j(n-k)}\\
&=c_0\bigl(h_{n-1}-\tfrac{1}{n-1}\bigr)+c_1n^{-1}\bigl(2h_{n-1}-\tfrac{n}{n-1}\bigr)\\
&\quad+n^{-1}\sum_{j\ge 2}c_j\sum_{k=2}^{n-1}\bigl(\tfrac{1}{k^j}+\tfrac{1}{k^{j-1}(n-k)}\bigr)\\
&=c_0\bigl(h_{n-1}-\tfrac{1}{n-1}\bigr)+c_1n^{-1}\bigl(2h_{n-1}-\tfrac{n}{n-1}\bigr)\\
&\quad+n^{-1}\sum_{j\ge 2}c_j(\zeta(j)-1) +O(n^{-2}\log n).
\end{align*}
Therefore
\begin{multline*}
\tfrac{d_1\log n}{nh_{n-1}}+\tfrac{d_2}{nh_{n-1}}+\tfrac{1}{h_{n-1}}\sum_{k=2}^{n-1}\tfrac{w_k}{n-k}-w_n\\
=\tfrac{d_1+c_1}{n} +\tfrac{1}{nh_{n-1}}\biggl(-d_1\ga+d_2-c_0-c_1+\sum_{j\ge 2}c_j\bigl(\zeta(j)-1\bigr)\biggr)
+O(n^{-2}).\\
\end{multline*}
So, selecting
\begin{equation}\label{2.516}
c_1=-d_1=-\tfrac{3}{\pi^2},\quad c_0=d_2+\sum_{j\ge 2}\bigl(c_j+\tfrac{d_1}{j}\bigr)(\zeta(j)-1),
\end{equation}
(as suggested by \eqref{2.5155}) we have
\begin{equation*}
\tfrac{d_1\log n}{nh_{n-1}}+\tfrac{d_2}{nh_{n-1}}+\tfrac{1}{h_{n-1}}\sum_{k=2}^{n-1}\tfrac{w_k}{n-k}-w_n=O(n^{-2}).
\end{equation*}
Therefore $w_n=\sum_{j\ge 0}c_j n^{-j}$ satisfies \eqref{2.035} within the additive error $O(n^{-2})$, provided that $\{c_j\}_{j\ge 0}$ satisfies \eqref{2.516}.
It is worth noticing that $c_0$ is well defined for every $\{c_j\}_{j\ge 2}$ provided that the series in \eqref{2.516} converges. 
The constant $c_0$ can be viewed as a counterpart of the Euler-Masceroni constant $\ga$.
Strikingly, $c_0$ depends on {\it all\/} $c_j$, $j\ge 2$, while $c_1$ is determined uniquely from the requirement that
$w_n$ satisfies \eqref{2.515} within $O(n^{-2})$ error. 

So the conclusion is
\begin{Proposition}
\label{P:sharp}
Assuming the h-ansatz, there exists a constant $c_0$ such that 
\begin{equation}\label{2.517}
\E[D_n] =\tfrac{6}{\pi^2}\log n+c_0-\tfrac{3}{\pi^2} n^{-1} +O(n^{-2}).
\end{equation}
\end{Proposition}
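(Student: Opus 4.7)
The plan is to substitute the h-ansatz $\E[D_n] = \zeta^{-1}(2)\log n + w_n$, with $w_n = \sum_{j\ge 0} c_j n^{-j}$, into the recurrence \eqref{2.035} for $u_n := \E[D_n] - \zeta^{-1}(2)\log n$, expand both sides in descending powers of $n$ (and of $h_{n-1}^{-1}$), and match the first few coefficients to pin down $c_1$ and $c_0$. The higher coefficients $c_j$ for $j\ge 2$ are free parameters of the ansatz; the point of Proposition \ref{P:sharp} is only that the leading two are forced, which already yields the claimed two-term expansion up to $O(n^{-2})$.

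The key computation is the evaluation of the convolution $\sum_{k=2}^{n-1} w_k/(n-k)$. Applying the partial-fraction identity $\tfrac{1}{k(n-k)} = \tfrac{1}{n}\bigl(\tfrac{1}{k} + \tfrac{1}{n-k}\bigr)$ converts each $\sum_{k=2}^{n-1} k^{-j}/(n-k)$ into a combination of $h_{n-1}$, tail harmonics $\sum_{k\ge 2} k^{-j} = \zeta(j) - 1$, and remainders of order $n^{-2}\log n$. Collecting terms yields
\[
\sum_{k=2}^{n-1}\tfrac{w_k}{n-k} = c_0\bigl(h_{n-1} - \tfrac{1}{n-1}\bigr) + \tfrac{c_1}{n}\bigl(2 h_{n-1} - \tfrac{n}{n-1}\bigr) + \tfrac{1}{n}\sum_{j\ge 2} c_j(\zeta(j) - 1) + O(n^{-2}\log n).
\]

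Dividing by $h_{n-1}$ and comparing against $w_n = c_0 + c_1 n^{-1} + O(n^{-2})$ in \eqref{2.035}, the $c_0 h_{n-1}/h_{n-1}$ piece absorbs the LHS constant, and the explicit forcing $(d_1 \log n + d_2)/(n h_{n-1})$ together with the remaining RHS pieces must reproduce $c_1/n$ modulo $O(n^{-2})$. The pure $1/n$ balance forces $c_1 = -d_1 = -3/\pi^2$; the $1/(n h_{n-1})$ balance then gives identity \eqref{2.516}, which serves as the consistency condition defining $c_0$ in terms of $d_1$, $d_2$, and the tail $\{c_j\}_{j\ge 2}$. The main subtlety, and really what the h-ansatz is buying us, is the convergence of the series $\sum_{j\ge 2}(c_j + d_1/j)(\zeta(j) - 1)$ in \eqref{2.516}, without which $c_0$ would not be well-defined; conditional on that, Proposition \ref{P:sharp} follows at once.
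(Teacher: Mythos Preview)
Your proposal is correct and follows essentially the same route as the paper: both substitute the ansatz $w_n=\sum_{j\ge 0}c_j n^{-j}$ into the recurrence \eqref{2.035}/\eqref{2.515}, compute the convolution $\sum_{k=2}^{n-1}w_k/(n-k)$ via the partial-fraction identity to obtain the same displayed formula, and then balance the $1/n$ and $1/(n h_{n-1})$ pieces to force $c_1=-d_1=-3/\pi^2$ and the consistency condition \eqref{2.516} for $c_0$. Your closing remark about convergence of the series $\sum_{j\ge 2}(c_j+d_1/j)(\zeta(j)-1)$ being the real content of the ansatz matches the paper's observation verbatim.
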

This is another part of Theorem \ref{thmA}.
One can calculate $\E [D_n]$ numerically via the basic recurrence, and doing so up to $n = 400,000$ gives a good fit\footnote{Taking the coefficient of $n^{-1}$ as unknown, 
the fit to this data is   $0.30408$, compared to $ \tfrac{3}{\pi^2}  = 0.30396$. } 
to \eqref{2.517} 
with $c_0 = 0.7951556604....$.
We do not have a conjecture for the explicit value of $c_0$.

In what follows,  we will use only a weak corollary of \eqref{2.517}, namely
\begin{equation}\label{2.518}
\E[D_n] =\tfrac{6}{\pi^2}\log n+c_0 +O(n^{-1}).
\end{equation}
 Paradoxically, the actual value of $c_0$ will be immaterial as well.

\subsection{The recursion for variance}
Parallel to the recursion  \eqref{2.01}  for expectations, here is the recursion for variance.
\begin{lemma}\label{lem0.9} Setting $v_n:=\var(D_n)$, we have
\begin{equation}\label{new3.01}
v_n=\tfrac{1}{h_{n-1}}\sum_{k=1}^{n-1}\tfrac{v_k+(\E[D_n]-\E[D_k])^2}{n-k}.
\end{equation}
\end{lemma}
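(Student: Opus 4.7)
The plan is to derive the variance recursion directly from the distributional identity $D_\nu \stackrel{d}{=}\tau_\nu + D^{(J_\nu)}$, where $J_\nu$ is the size of the subtree containing the uniformly random leaf, and $\tau_\nu$, $J_\nu$, and the independent copy $D^{(J_\nu)}$ of $D_{J_\nu}$ are jointly independent. First, pooling the two cases in the case-by-case recursion immediately preceding \eqref{2} via the symmetry $q(\nu,k) = q(\nu,\nu-k)$, I would read off
\[
\P(J_\nu = k) \;=\; 2\,q(\nu,k)\cdot\tfrac{k}{\nu} \;=\; \frac{1}{h_{\nu-1}(\nu-k)}, \qquad k=1,\dots,\nu-1,
\]
so that $\E_{J_\nu}[g(J_\nu)] = \tfrac{1}{h_{\nu-1}}\sum_{k=1}^{\nu-1}\tfrac{g(k)}{\nu-k}$ for any test function $g$. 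Also $\E[\tau_\nu]=1/h_{\nu-1}$ and $\var(\tau_\nu)=1/h_{\nu-1}^{\,2}$.

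Next, by independence of $\tau_\nu$ and $D^{(J_\nu)}$,
\[
v_\nu \;=\; \var(\tau_\nu) + \var\bigl(D^{(J_\nu)}\bigr) \;=\; \tfrac{1}{h_{\nu-1}^{\,2}} + \var\bigl(D^{(J_\nu)}\bigr),
\]
and the conditional variance formula applied to the mixture $D^{(J_\nu)}$ gives
\[
\var\bigl(D^{(J_\nu)}\bigr) \;=\; \E_{J_\nu}[v_{J_\nu}] \;+\; \var_{J_\nu}\bigl(\E[D_{J_\nu}]\bigr).
\]
The first term immediately yields $\tfrac{1}{h_{\nu-1}}\sum_{k=1}^{\nu-1}\tfrac{v_k}{\nu-k}$, which accounts for the $v_k$-piece of the claimed identity.

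The only real trick is to recognize how $\var(\tau_\nu)=1/h_{\nu-1}^{\,2}$ folds into the squared-mean differences. Here I would invoke \eqref{2.01}, which is exactly $\E[D_\nu] - \E_{J_\nu}\bigl[\E[D_{J_\nu}]\bigr] = 1/h_{\nu-1}$, and apply the parallel-axis identity with center $\E[D_\nu]$:
\[
\E_{J_\nu}\!\bigl[(\E[D_\nu]-\E[D_{J_\nu}])^2\bigr] \;=\; \var_{J_\nu}\bigl(\E[D_{J_\nu}]\bigr) \;+\; \bigl(\E[D_\nu]-\E_{J_\nu}[\E[D_{J_\nu}]]\bigr)^2 \;=\; \var_{J_\nu}\bigl(\E[D_{J_\nu}]\bigr) + \tfrac{1}{h_{\nu-1}^{\,2}}.
\]
Assembling the two contributions and substituting the PMF of $J_\nu$ gives precisely
\[
v_\nu \;=\; \tfrac{1}{h_{\nu-1}}\sum_{k=1}^{\nu-1}\tfrac{v_k}{\nu-k} \;+\; \tfrac{1}{h_{\nu-1}}\sum_{k=1}^{\nu-1}\tfrac{(\E[D_\nu]-\E[D_k])^2}{\nu-k},
\]
which is the desired \eqref{new3.01}. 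There is no significant obstacle; the one point to watch is not dropping the $\tau_\nu$ variance, which is exactly what the mean recursion \eqref{2.01} converts into the shift needed to produce the $(\E[D_n]-\E[D_k])^2$ numerator.
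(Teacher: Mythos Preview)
Your proof is correct, but it takes a different route from the paper. The paper obtains the recursion by differentiating the Laplace-transform identity \eqref{2} twice at $u=0$ to get a recursion for $\E[D_n^2]$, then subtracts $\E^2[D_n]$ and simplifies via \eqref{2.01}. You instead work directly from the distributional recursion $D_\nu \overset{d}{=}\tau_\nu+D^{(J_\nu)}$, apply the law of total variance, and absorb $\var(\tau_\nu)=1/h_{\nu-1}^{2}$ via the parallel-axis identity together with \eqref{2.01}. The paper's own \textbf{Note} following the proof explicitly flags your route as an alternative (``could be obtained by using the `law of total variance'\,''). Your argument is probabilistically more transparent---the role of \eqref{2.01} as the exact identity that converts $\var(\tau_\nu)$ into the mean-shift is especially clean---while the paper's approach is chosen for uniformity with the Laplace-transform machinery used throughout, and in particular for the parallel derivation for $L_n$, where the deterministic increment $1$ in place of $\tau_\nu$ produces the extra $-1$ in \eqref{new3}. (Your method handles that case equally well: $\var(1)=0$ while the parallel-axis shift becomes $(\E[L_\nu]-\E_{J_\nu}[\E[L_{J_\nu}]])^2=1$ by \eqref{2.3}, yielding the $-1$.)
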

\begin{proof} 
Differentiating twice both sides of \eqref{2} at $u=0$, we get 
\begin{eqnarray*}
\E[D_n^2]&=&\tfrac{2}{h_{n-1}^3}\cdot h_{n-1}+\tfrac{2}{h_{n-1}^2}\sum_{k=1}^{n-1}\tfrac{\E[D_k]}{n-k}+
\tfrac{1}{h_{n-1}}\sum_{k=1}^{n-1}\tfrac{\E[D_k^2]}{n-i}\\
&=&\tfrac{2}{h_{n-1}^2}\biggl(1+\sum_{k=1}^{n-1}\tfrac{\E[D_k]}{n-k}\biggr)+\tfrac{1}{h_{n-1}}\sum_{k=1}^{n-1}\tfrac{\E[D_k^2]}{n-k}\\
&=&\tfrac{2\E[D_n]}{h_{n-1}}+\tfrac{1}{h_{n-1}}\sum_{k=1}^{n-1}\tfrac{\E[D_k^2]}{n-k}.
\end{eqnarray*}
Since $v_n=\E[D_n^2]-\E^2[D_n]$, the equation above becomes
\[
v_n=\tfrac{2\E[D_n]}{h_{n-1}}+\tfrac{1}{h_{n-1}}\sum_{k=1}^{n-1}\tfrac{v_k+\E^2[D_k]}{n-k}-\E^2[D_n] .
\]
The identity \eqref{new3.01} holds because, by \eqref{2.01},
\begin{equation*}
\tfrac{2\E[D_n]}{h_{n-1}}+\tfrac{1}{h_{n-1}}\sum_{k=1}^{n-1}\tfrac{\E^2[D_k]}{n-k}-\E^2[D_n]
=\tfrac{1}{h_{n-1}}\sum_{k=1}^{n-1}\tfrac{(\E[D_n]-\E[D_k])^2}{n-k}.
\end{equation*}
\end{proof}
\noindent {\bf Note.\/} The equation \eqref{new3} could be obtained by using the ``law of total variance". We preferred the above derivation as more direct in the present context, inconceivable without Laplace transform. Besides, the similar argument will be used later to derive  a recurrence for variance of the edge length of the random path. It will be almost the ``same'' as \eqref{new3.01}, but with an unexpected, if not shocking, additive term $-1$ on the RHS.

\subsection{Sharp estimates of $\var (D_n)$}
\label{sec:var}
Assuming the h-ansatz, and using \eqref{new3.01}, we are able to obtain the following sharp estimate, asserted as part of Theorem \ref{thmA}.
\begin{proposition}\label{thm0.91} Contingent on the h-ansatz,
\begin{equation*}
v_n=\tfrac{2\zeta(3)}{\zeta^3(2)}\log n +O(1).\quad n\ge 2.
\end{equation*}
\end{proposition}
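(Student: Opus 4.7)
My plan is to apply the recursion-with-forcing strategy of Proposition \ref{1.5} to \eqref{new3.01}: expand the forcing $F_n:=h_{n-1}^{-1}\sum_{k<n}(\E[D_n]-\E[D_k])^2/(n-k)$ using the h-ansatz, identify the leading $B\log n$ growth of $v_n$, and bound the residual by comparison.

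Under \eqref{2.518} (in its sharper form \eqref{2.517}), for $2\le k<n$ we have $\E[D_n]-\E[D_k]=A_k+B_k$ with $A_k:=\zeta^{-1}(2)\log(n/k)$ and $B_k:=\tfrac{1}{2\zeta(2)}(k^{-1}-n^{-1})+O(k^{-2})$, so
\[
(\E[D_n]-\E[D_k])^2=\zeta^{-2}(2)\log^2(n/k)+2A_kB_k+B_k^2,
\]
while the $k=1$ summand contributes $O(n^{-1}\log^2 n)$ after the $h_{n-1}^{-1}$ weighting. The leading piece of $F_n$ is thus $\zeta^{-2}(2)h_{n-1}^{-1}\sum_k\log^2(k/n)/(n-k)$, which requires the quadratic analogue of Lemma \ref{L:nk},
\[
\sum_{k=1}^{n-1}\tfrac{\log^2(k/n)}{n-k}=2\zeta(3)+O(n^{-1}\log^c n).
\]
I would prove this by the same Euler--Maclaurin recipe: the decomposition $\tfrac{1}{n-k}=\tfrac{1}{n}+\tfrac{k/n}{n-k}$ reduces it to the Riemann sum for $\int_0^1\log^2 y\,dy=2$ plus $\tfrac{1}{n}\sum_k g(k/n)$ with $g(y):=y\log^2 y/(1-y)$ and $\int_0^1 g(y)\,dy=2(\zeta(3)-1)$; Bernoulli boundary terms from \eqref{2.031} supply the remainder.

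Set $v_n=B\log n+r_n$ with $B:=2\zeta(3)/\zeta^3(2)$. Applying Lemma \ref{L:nk}, $h_{n-1}^{-1}\sum_k B\log k/(n-k)=B\log n-B\zeta(2)/h_{n-1}+O(n^{-1})$, and the identity $B\zeta(2)=2\zeta(3)/\zeta^2(2)$ yields the exact cancellation that leaves
\[
r_n=\tfrac{1}{h_{n-1}}\sum_{k=1}^{n-1}\tfrac{r_k}{n-k}+\epsilon_n,
\]
with $\epsilon_n$ collecting the subleading forcing contributions.

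The main obstacle is that $\epsilon_n$ is not purely $O(n^{-1})$: the cross term $2A_kB_k$, combined with $(k^{-1}-n^{-1})/(n-k)=1/(kn)$ and $\sum_k\log(n/k)/k\sim\tfrac{1}{2}\log^2 n$, injects a $\beta\log n/n$ contribution with $\beta=1/(2\zeta^2(2))$, so $\epsilon_n=\beta\log n/n+O(n^{-1})$. The Proposition \ref{1.5}(iii) comparison with $x_n=(n-1)/n$ only closes directly for $O(n^{-1})$ forcings. To finish, I would construct a bounded particular solution $Y_n=b-2\beta\log n/n+O(n^{-1})$: a direct computation shows that $c\log n/n$ fed into the homogeneous side produces residual $-\tfrac{c}{2}\log n/n+O(n^{-1})$, and constants span the kernel of the homogeneous recurrence, so $Y_n$ can be chosen bounded. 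Then $\tilde r_n:=r_n-Y_n$ obeys the residual recurrence with pure $O(n^{-1})$ forcing and Proposition \ref{1.5}(iii) applies (after adjusting by a kernel constant to match initial conditions) to give $\tilde r_n=O(1)$, hence $r_n=O(1)$, which is the claim.
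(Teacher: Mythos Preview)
Your overall strategy is correct and close in spirit to the paper's, but there is a computational gap in your identification of the forcing $\epsilon_n$, and the paper's route is cleaner in a way worth noting.

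The gap: you write $\epsilon_n=\beta\log n/n+O(n^{-1})$ with $\beta=1/(2\zeta^2(2))$, attributing the entire $\log n/n$ piece to the cross term $2A_kB_k$. But the quadratic sum $\sum_{k<n}\log^2(k/n)/(n-k)$ is only $2\zeta(3)+O(n^{-1}\log^2 n)$ (the integrand behaves like $\log^2 x$ near $x=0$, so the $k=1$ boundary contribution alone is of order $\log^2 n/n$). After dividing by $h_{n-1}$ this error contributes another term of exact order $\log n/n$ to $\epsilon_n$, which you have not accounted for. Your particular-solution construction $Y_n=-2\beta\log n/n$ depends on knowing the coefficient $\beta$ exactly, so as written the cancellation fails and $\tilde r_n$ still carries an $O(\log n/n)$ forcing rather than $O(n^{-1})$. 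The fix is either to push the Euler--Maclaurin analysis of the quadratic sum one order further (extracting the precise $c\log^2 n/n$ term), or to abandon the exact-coefficient approach and build a supersolution directly for an $O(n^{-1}\log n)$ forcing.

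The paper avoids this entirely by a neat choice of comparison: instead of subtracting $B\log n$, it subtracts $\tfrac{2\zeta(3)}{\zeta^2(2)}\E[D_n]$. Since $\E[D_n]$ satisfies the \emph{exact} recursion \eqref{2.01} with forcing $1$, the difference $w_n:=\bigl|v_n-\tfrac{2\zeta(3)}{\zeta^2(2)}\E[D_n]\bigr|$ satisfies
\[
w_n\le \tfrac{1}{h_{n-1}}\Bigl(cn^{-1}\log^2 n+\sum_{k<n}\tfrac{w_k}{n-k}\Bigr),
\]
with \emph{no} need to resolve the $\log n/n$ coefficient. The paper then exhibits an explicit concave supersolution $z_n=c\bigl(\log^2 14-\tfrac{\log^2(14n)}{n}\bigr)$ for this inequality. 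This is exactly the kind of simplification your ``adjusting by a kernel constant'' remark gestures at, but executed at the level of the comparison function rather than the forcing: since $\E[D_n]=\zeta^{-1}(2)\log n+O(1)$, the two comparisons differ by $O(1)$, yet using $\E[D_n]$ rather than $\log n$ makes the cancellation automatic.
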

{\bf Note.\/} It is the term $(\E[D_n]-\E[D_k])^2$ in \eqref{new3.01} that necessitates our reliance on the h-ansatz. 
Comfortingly, the first-order result $\var (D_n) \sim \tfrac{2\zeta(3)}{\zeta^3(2)}\log n$ follows from the CLT proof in section \ref{sec:Dnorm}, independently of the h-ansatz.
\begin{proof} 
By \eqref{2.518}, we have
\begin{eqnarray}
\bigl(\E[D_n]-\E[D_k]\bigr)^2 &=& \zeta^{-2}(2) \bigl(\log(n/k) +O(k^{-1})\bigr)^2\nonumber \\
&=&\zeta^{-2}(2)\Bigl(\log^2(n/k)+O\bigl(k^{-1}\log(n/k)\bigr)+O(k^{-2})\Bigr). \label{3.03}
\end{eqnarray}
We need the estimates
\begin{align*}
\sum_{k=1}^{n-1}\tfrac{\log(n/k)}{k(n-k)}&=n^{-1}\sum_{k=1}^{n-1}(k^{-1}+(n-k)^{-1})\log(n/k)=O(n^{-1}\log^2 n),\\
\sum_{k=1}^{n-1}\tfrac{1}{k^2(n-k)}&=n^{-1}\sum_{k=1}^{n-1}\bigl(k^{-2}+n^{-1}(k^{-1}+(n-k)^{-1})\bigr)=O(n^{-1}).
\end{align*}
Consider the dominant term in \eqref{3.03}.  Observe that the function $\tfrac{\log^2(n/x)}{n-x}$ is convex.
So, using \eqref{2.031} for $m=0$, we obtain
\begin{eqnarray*}
\sum_{k=1}^{n-1}\tfrac{\log^2(n/k)}{n-k} &=& \int_1^n\tfrac{\log^2(n/x)}{n-x}\,dx+O(n^{-1}\log^2 n)\nonumber\\
&=& \int_0^1\tfrac{\log^2(1/x)}{1-x}\,dx+O(n^{-1}\log^2 n)\nonumber \\
&=& 2\zeta(3)+O(n^{-1}\log^2 n). \label{2.0315}
\end{eqnarray*}
To explain the final equality, by induction on $r$ and  integrating by parts, we obtain
\begin{equation}
 \int_0^1 z^j \log^r z\,dz=(-1)^r\tfrac{r!}{(j+1)^{r+1}}. 
 \label{zzz}
 \end{equation}
Consequently
\begin{equation}
\label{2.495}
\int_0^1\tfrac{\log^r z}{1-z}\,dz=\int_0^1(\log^r z)\sum_{j\ge 0}z^j\,dz=(-1)^r r!\,\zeta(r+1),\quad r\ge 1
\end{equation}
used for $r=2$ at \eqref{2.0315}.
Now the recursion in Lemma \ref{lem0.9} becomes
\[
v_n=\tfrac{1}{h_{n-1}}\biggl(\tfrac{2\zeta(3)}{\zeta^2(2)} + O(n^{-1}\log^2 n)+\sum_{k=1}^{n-1}\tfrac{v_k}{n-k}\biggr).
\]
Recalling that
\[
\E[D_n]=\tfrac{1}{h_{n-1}}\biggl(1+\sum_{k=1}^{n-1}\tfrac{\E[D_k]}{n-k}\biggr),
\]
it follows that $w_n:=\Big|v_n-\tfrac{2\zeta (3)}{\zeta^2(2)} \E[D_n]\Big|$ satisfies
\begin{equation}\label{3.039}
w_n\le \tfrac{1}{h_{n-1}}\biggl( cn^{-1}\log ^2 n+\sum_{k=1}^{n-1}\tfrac{w_k}{n-k}\biggr),\quad n\ge 2,\,\, w_1=0,
\end{equation}
for some constant $c>0$. 
Let us prove that the sequence 
\[ z_n:=c\bigl(\log^2(14)-\tfrac{\log^2(14n)}{n}\bigr) \]
satisfies 
\begin{equation}\label{3.04}
z_n\ge \tfrac{1}{h_{n-1}}\biggl( cn^{-1}\log ^2 n+\sum_{k=1}^{n-1}\tfrac{z_k}{n-k}\biggr), \quad n\ge 2.
\end{equation}
Because $z_1=0=w_1$, we will get then, predictably by induction using \eqref{3.039}, that $w_n\le z_n$. 
Let us prove \eqref{3.04}. For $g(x):=-\tfrac{\log^2(14x)}{x}$, we have 
\begin{align*}
g'(x)&=x^{-2}\bigl(\log^2(14x)-2\log(14x)\bigr),\\
g^{\prime\prime}(x)&=-\tfrac{2}{x^3}\bigl[\log^2(14x)-3\log(14x)+1\bigr] <0,\quad x\ge 1,
\end{align*}
because $\log(14)>2.63>\tfrac{3+\sqrt{5}}{2}$, the larger of two roots of $x^2-3x+1$. Therefore $g(x)$ is {\it concave\/} on $[1,\infty)$.
So,
\begin{eqnarray*}
\tfrac{1}{h_{n-1}}\sum_{k=1}^{n-1}\tfrac{g(k)}{n-k}
\le g\biggl(\tfrac{1}{h_{n-1}}\sum_{k=1}^{n-1}\tfrac{k}{n-k}\biggr)&=&g\bigl(n-\tfrac{n-1}{h_{n-1}}\bigr) \\
&\le &g(n)-g'(n)\tfrac{n-1}{h_{n-1}}\\
&=&g(n)-n^{-2}\bigl(\log^2(14n)-2\log(14 n)\bigr)\tfrac{n-1}{h_{n-1}}.
\end{eqnarray*}
Since $z_k=c\bigl(\log^2(14)+g(k)\bigr)$, we obtain then
\begin{multline*}
\tfrac{1}{h_{n-1}}\biggl( \tfrac{c\log ^2 n}{n}+\sum_{k=1}^{n-1}\tfrac{z_k}{n-k}\biggr)\\
\le z_n+\tfrac{c}{h_{n-1}}\bigl[\tfrac{\log^2 n}{n}-n^{-2}(n-1)\bigl(\log^2(14n)-2\log(14 n)\bigr)\bigr]<z_n,
\end{multline*}
because the expression within square brackets is easily shown to be negative for $n\ge 2$.  
This establishes \eqref{3.04}.
\end{proof}

\subsection{How correlated are  leaf-heights?}
\label{sec:corr}
Recall the statement of Theorem \ref{thmF}, copied below as Theorem \ref{thmF2}.
 To study the interaction between the two levels of randomness, it is natural to consider the correlation between leaf heights.
 Write $D_n^{(1)}$ and $D_n^{(2)}$ for the time-heights, within the same realization of the random tree,
 of two distinct leaves chosen uniformly over pairs of leaves. Both time-heights individually are distributed as $D_n$,
 the time height of the uniformly random leaf.
 We study the correlation coefficient defined by
\[
r_n=\tfrac{\E[D_n^{(1)} D_n^{(2)}]-\E^2[D_n]}{\text{Var}(D_n)}.
\]
\begin{theorem}\label{thmF2} Contingent on the $h$-ansatz, $r_n=O(\log^{-1}n)$.
\end{theorem}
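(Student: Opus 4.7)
The plan is to mimic the variance analysis of Proposition \ref{thm0.91}: derive a linear recursion for the covariance
\[
C_n := \E[D_n^{(1)} D_n^{(2)}] - \mu_n^2, \qquad \mu_n := \E[D_n],
\]
and then show $C_n = O(1)$ by comparison with an explicit sequence. Since $v_n = \var(D_n) = \Theta(\log n)$ by Theorem \ref{thmA}, this will give $r_n = C_n/v_n = O(\log^{-1} n)$.

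First I would condition on the first split $(k,n-k)$ and on where the two uniformly chosen distinct leaves fall: both in the left subtree (with conditional probability $k(k-1)/[n(n-1)]$), both in the right ($(n-k)(n-k-1)/[n(n-1)]$), or one in each ($2k(n-k)/[n(n-1)]$). When both share a subtree of size $m$, their joint heights have the same law as $(\tau_n + D_m^{(1)},\,\tau_n + D_m^{(2)})$; when they split, they equal $(\tau_n + D_k,\,\tau_n + D_{n-k})$ with the two $D$-terms conditionally independent. Writing $M_n := \E[D_n^{(1)} D_n^{(2)}]$ and using $\E\tau_n = h_{n-1}^{-1}$, $\E\tau_n^2 = 2h_{n-1}^{-2}$, the symmetry $q(n,k)=q(n,n-k)$, the identity $q(n,k)\cdot 2k/n = [h_{n-1}(n-k)]^{-1}$, and the recurrence \eqref{2.01} for $\mu_n$, the linear-in-$\mu_k$ pieces collapse to $2\mu_n/h_{n-1}$, yielding
\[
M_n = \tfrac{2\mu_n}{h_{n-1}} + \tfrac{1}{h_{n-1}(n-1)}\sum_{k=2}^{n-1} \tfrac{(k-1) M_k}{n-k} + \tfrac{1}{h_{n-1}(n-1)}\sum_{k=1}^{n-1} \mu_k\mu_{n-k}.
\]
Substituting $M_k = C_k + \mu_k^2$ yields
\[
C_n = R_n + \tfrac{1}{h_{n-1}(n-1)}\sum_{k=2}^{n-1} \tfrac{(k-1)C_k}{n-k},
\]
\[
R_n := \tfrac{2\mu_n}{h_{n-1}} + \tfrac{1}{h_{n-1}(n-1)}\Bigl[\sum_{k=2}^{n-1} \tfrac{(k-1)\mu_k^2}{n-k} + \sum_{k=1}^{n-1} \mu_k\mu_{n-k}\Bigr] - \mu_n^2.
\]

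The main obstacle is proving that the source $R_n = O(\log^{-1}n)$, and it is here that the $h$-ansatz (Proposition \ref{P:sharp}) is indispensable. I would substitute $\mu_k = \zeta^{-1}(2)\log k + c_0 + O(k^{-1})$ everywhere and expand both convolution sums using the Euler-Maclaurin machinery developed for Lemma \ref{L:nk}, together with the identities \eqref{zzz} and \eqref{2.495} applied to the model integrals $\int_0^1 \log^2 y/(1-y)\,dy = 2\zeta(3)$ and $\int_0^1 \log y\log(1-y)\,dy = 2 - \zeta(2)$. The rewriting $(k-1)/(n-k) = (n-1)/(n-k) - 1$ reduces $\sum (k-1)\mu_k^2/(n-k)$ to a combination of $\sum \mu_k^2/(n-k)$ and $\sum \mu_k^2$, both accessible by these tools. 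Careful bookkeeping then reveals three layers of cancellation: the leading $\lambda^2\log^2 n$ term of $\mu_n^2$ is matched by the dominant piece of the first convolution; the $\log n$ pieces cancel between the two convolutions and $\mu_n^2$; and the surviving $O(1)$ constants collapse via the identity $\lambda\zeta(2)=1$. Crucially, every occurrence of the unknown constant $c_0$ cancels out of the net total, exactly as in Proposition \ref{thm0.91}. What remains is $R_n = O(\log^{-1} n)$.

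With $R_n = O(h_{n-1}^{-1})$, the proof is completed by the sequence-comparison argument of step (iii) of Proposition \ref{1.5}. A direct computation (using $\sum_{k=2}^{n-1}(k-1)/(n-k) = (n-1)(h_{n-1}-1)$) shows that the linear operator $(Tz)_n := \frac{1}{h_{n-1}(n-1)}\sum_{k=2}^{n-1}\frac{(k-1)z_k}{n-k}$ acts on a constant sequence $z\equiv M$ by $(TM)_n = M(1 - h_{n-1}^{-1})$. Hence any constant $M$ exceeding both the (finite) small-$n$ values of $|C_n|$ and the constant implicit in the bound $|R_n|\le K/h_{n-1}$ satisfies $|C_n| \le |R_n| + M(1 - h_{n-1}^{-1}) \le M$ by induction, so $C_n = O(1)$. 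Dividing by $v_n\sim \tfrac{2\zeta(3)}{\zeta^3(2)}\log n$ yields the claimed $r_n = O(\log^{-1} n)$.
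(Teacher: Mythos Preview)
Your proposal is correct and follows essentially the same route as the paper: both derive the identical recursion \eqref{3.05} for $M_n=\E[D_n^{(1)}D_n^{(2)}]$, use the $h$-ansatz to reduce the source term to $O(\log^{-1}n)$, and then bound the solution by $O(1)$ via the same inductive comparison (the paper defers this to the argument around \eqref{4.0531}). The only organizational difference is that you subtract $\mu_n^2$ up front and show the covariance source $R_n=O(\log^{-1}n)$ (with the $c_0$-cancellations occurring there), whereas the paper fits an ansatz $A\log^2\nu+B\log\nu$ to $M_\nu$ and lets $c_0$ cancel only in the final computation of $r_n$; the underlying asymptotic identities are the same.
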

\begin{proof} 
Recall the splitting distribution $n \to (L_n,R_n)$ at \eqref{01}:
\begin{equation}
 \P(L_n = i) = q(n,i) = \frac{n}{2h_{n-1}} \frac{1}{i(n-i)} = q(n,n-i) , \ 1 \le i \le n-1 .
 \label{rule2}
 \end{equation}
There is a natural recursion for $Z_\nu := D_\nu^{(1)} \cdot D_\nu^{(2)}$, as follows.
\begin{equation}\label{3.049}
Z_{\nu}\overset{d}=\left\{\begin{aligned}
&(\tau_{\nu}+D_i^{(1)})(\tau_{\nu}+D_i^{(2)}),&&\text{with probability }q(\nu,i)\cdot\tfrac{(i)_2}{(\nu)_2},\\
&(\tau_{\nu}+D_{\nu-i}^{(1)})(\tau_{\nu}+D_{\nu-i}^{(2)}),&&\text{with probability }q(\nu,i)\cdot\tfrac{(\nu-i)_2}{(\nu)_2},\\
&(\tau_{\nu}+D_{i}^{(1)})(\tau_{\nu}+D_{\nu-i}^{(2)}),&&\text{with probability }q(\nu,i)\cdot\tfrac{i(\nu-i)}{(\nu)_2},\\
&(\tau_{\nu}+D_{i}^{(2)})(\tau_{\nu}+D_{\nu-i}^{(1)}),&&\text{with probability }q(\nu,i)\cdot\tfrac{i(\nu-i)}{(\nu)_2}.
\end{aligned}\right.
\end{equation}
Here $\tau_{\nu}$ is the Exponential$(h_{\nu -1})$ hold time.
The first two cases correspond to the two leaves being in the same subtree, so their heights are dependent, whereas 
the last two cases correspond to the two leaves being in the different subtrees, so their heights are (conditionally) independent.

Consequently
\begin{multline*}
\E[Z_{\nu}|\, L_\nu =i]=\Bigl(\tfrac{2}{h^2_{\nu-1}}+\tfrac{2}{h_{\nu-1}}\E[D_i]+\E[Z_i]\Bigr)\tfrac{(i)_2}{(\nu)_2}\\
+\Bigl(\tfrac{2}{h^2_{\nu-1}}+\tfrac{2}{h_{\nu-1}}\E[D_{\nu-i}]+\E[Z_{\nu-i}]\Bigr)\tfrac{(\nu-i)_2}{(\nu)_2}\\
+2\Bigl(\tfrac{2}{h^2_{\nu-1}}+\tfrac{1}{h_{\nu-1}}\bigl(\E[D_i]+\E[D_{\nu-i}]\bigr) +\E[D_i] \cdot\E[D_{\nu-i}]\Bigr)\tfrac{i(\nu-i)}{(\nu)_2},
\end{multline*}
or, with a bit of algebra,
\begin{multline*}
\E[Z_{\nu}|\, L_\nu=i]=\tfrac{2}{h^2_{\nu-1}}+\tfrac{2i\E[D_i]}{\nu h_{\nu-1}}+\tfrac{2(\nu-i)\E[D_{\nu-i}]}{\nu h_{\nu-1}}\\
+\tfrac{1}{(\nu)_2}\Bigl((i)_2\E[Z_i]+(\nu-i)_2\E[Z_{\nu-i}]+2i(\nu-i)\E[D_i]\,\E[D_{\nu-i}]\Bigr).
\end{multline*}
Using \eqref{rule2} we obtain then
\begin{multline*}
\E[Z_{\nu}]=\sum_{i=1}^{\nu-1} q(\nu,i)\E[Z_{\nu}|\, L_\nu =i]=\tfrac{2}{h^2_{\nu-1}}+\tfrac{2}{h_{\nu-1}^2}\sum_{i=1}^{\nu-1}
\tfrac{\E[D_i]}{\nu-i}\\
+\tfrac{1}{(\nu-1)h_{\nu-1}}\sum_{i=1}^{\nu-1}\E[D_i]\, \E[D_{\nu-i}]+\tfrac{1}{(\nu-1)h_{\nu-1}}\sum_{i=1}^{\nu-1}\tfrac{(i-1)\E[Z_i]}{\nu-i}.
\end{multline*}
So, using $\E[D_{\nu}]=\tfrac{1}{h_{\nu-1}}\Bigl(1+\sum_{i=1}^{\nu-1}\tfrac{\E[D_i]}{\nu-i}\Bigr)$, we arrive at
\begin{equation}\label{3.05}
\begin{aligned}
\E[Z_{\nu}]&=\tfrac{1}{(\nu-1)h_{\nu-1}}\sum_{i=1}^{\nu-1}\tfrac{(i-1)\E[Z_i]}{\nu-i}\\
&+\tfrac{2\E[D_{\nu}]}{h_{\nu-1}} +\tfrac{1}{(\nu-1)h_{\nu-1}}\sum_{i=1}^{\nu-1}\E[D_i] \,\E[D_{\nu-i}].
\end{aligned}
\end{equation}
We use \eqref{3.05} to sharply estimate $\E[Z_{\nu}]$ and then estimate $r_n=\frac{\E[Z_{\nu}]-\E^2[D_n]}{\text{Var}(D_n)}$. To start, 
\[
\tfrac{2\E[D_{\nu}]}{h_{\nu-1}}=2\zeta^{-1}(2)+O(\log^{-1}\nu).
\]
Secondly,
\begin{multline*}
\E[D_i]\,\E[D_{\nu-i}]=\bigl[\zeta^{-1}(2)\log i +c_0 +O(i^{-1})\bigr]\\
\times\bigl[\zeta^{-1}(2)\log (\nu-i)+c_0 +O((\nu-i)^{-1})\bigr].\\
\end{multline*}
The leading contribution to $\sum_i \E[D_i]\,\E[D_{\nu-i}]$ comes from
\begin{multline*}
\zeta^{-2}(2)\sum_{i=1}^{\nu-1}\log i\cdot \log (\nu-i)\\
=\zeta^{-2}(2)(\nu-1)\log^2\nu+2\zeta^{-2}(2)\log \nu\sum_{i=1}^{\nu-1}\log(i/\nu)\\
+\zeta^{-2}(2)\sum_{i=1}^{\nu-1}\log(i/\nu)\log ((\nu-i)/\nu)\\
=\zeta^{-2}(2)\nu\log^2\nu+2\zeta^{-2}(2)\nu\log \nu\int_0^1\log x\,dx+O(\nu)\\
=\zeta^{-2}(2)\big(\nu\log^2\nu-2\nu\log\nu\bigr)+O(\nu).
\end{multline*}
The secondary contribution to $\sum_i \E[D_i]\,\E[D_{\nu-i}]$ comes from $c_0\zeta^{-1}(2)(\log i+\log(\nu-i))$, and it equals $2c_0\zeta^{-1}(2)\nu\log \nu+O(\nu)$. The terms $c_0, O(i^{-1}), O((\nu-i)^{-1})$ contribute jointly $O(\nu)$. Altogether,
\[
\sum_{i=1}^{\nu-1}\E[D_i]\,\E[D_{\nu-i}]=\zeta^{-2}(2)\big(\nu\log^2\nu-2\nu\log\nu\bigr)+2c_0\zeta^{-1}(2)\nu\log \nu+O(\nu).
\]
Therefore the equation \eqref{3.05} becomes
\begin{multline}\label{3.06}
\E[Z_{\nu}]=\tfrac{1}{(\nu-1)h_{\nu-1}}\sum_{i=1}^{\nu-1}\tfrac{(i-1)\E[Z_i]}{\nu-i}+2\zeta^{-1}(2)\\
+\zeta^{-2}(2)\big(\log\nu-2-\ga\bigr)+2c_0\zeta^{-1}(2)+O(\log^{-1}\nu).
\end{multline}
Let us look at an approximate solution $\widetilde{E}(\nu):=A \log^2\nu+B\log\nu$. The RHS of the above equation is
\begin{multline*}
\tfrac{1}{(\nu-1)h_{\nu-1}}\sum_{i=1}^{\nu-1}\tfrac{(i-1)(A\log^2 i+B\log i)}{\nu-i}+2\zeta^{-1}(2)
+\zeta^{-2}(2)\big(\log\nu-2-\ga\bigr)+2c_0\zeta^{-1}(2)+O(\log^{-1}\nu).\\
\end{multline*}
Here, since $\sum_i\tfrac{i-1}{\nu-i}=(\nu-1)(h_{\nu-1}-1)$, we have
\begin{multline*}
\tfrac{1}{(\nu-1)h_{\nu-1}}\sum_{i=1}^{\nu-1}\tfrac{(i-1)\log^2 i}{\nu-i}=\tfrac{1}{(\nu-1)h_{\nu-1}}\sum_{i=1}^{\nu-1}\tfrac{(i-1)\bigl(\log(i/\nu)+\log\nu\bigr)^2}{\nu-i}\\
=\tfrac{h_{\nu-1}-1}{h_{\nu-1}}\log^2\nu+\tfrac{2\log\nu}{(\nu-1)h_{\nu-1}}\sum_{i=1}^{\nu-1}\tfrac{(i-1)\log(i/\nu)}{\nu-i}+
\tfrac{1}{(\nu-1)h_{\nu-1}}\sum_{i=1}^{\nu-1}\tfrac{(i-1)\log^2(i/\nu)}{\nu-i}\\
=\log^2\nu-\log\nu+\ga+2\int_0^1\tfrac{x\log x}{1-x}\,dx+O\bigl(\log^{-1}\nu\bigr)\\
=\log^2\nu-\log\nu+\ga+2(1-\zeta(2))+O\bigl(\log^{-1}\nu\bigr),
\end{multline*}
and
\[
\tfrac{1}{(\nu-1)h_{\nu-1}}\sum_{i=1}^{\nu-1}\tfrac{(i-1)\log i}{\nu-i}=\log\nu-1+O(\log^{-1}\nu).
\]
Therefore, with $\widetilde{E}(\cdot)$ instead of $\E[Z_{\cdot}]$, the RHS of the equation \eqref{3.06} becomes
\begin{multline*}
A\bigl(\log^2\nu-\log \nu+\ga+2(1-\zeta(2))\bigr)+B\bigl(\log \nu-1)\\
+\zeta^{-2}(2)\big(\log\nu-2-\ga\bigr)+2(c_0+1)\zeta^{-1}(2)+O(\log^{-1}\nu).
\end{multline*}
And we need this to be equal to $\widetilde{E}(\nu):=A\log^2\nu+B\log\nu$ within an additive error $O(\log^{-1}\nu)$, meaning that 
\begin{align*}
&\qquad\qquad\qquad-A+B+\zeta^{-2}(2)=B,\\
&A\bigl[\ga+2(1-\zeta(2))\bigr]-B-(2+\ga)\zeta^{-2}(2)+2(c_0+1)\zeta^{-1}(2)=0,
\end{align*}
or explicitly
\begin{equation}\label{3.061}
A=\zeta^{-2}(2), \quad B=2c_0\zeta^{-1}(2).
\end{equation}
With these $A$ and $B$, our approximation $\widetilde{E}(\nu)$ satisfies the same equation \eqref{3.06} as $\E[Z_{\nu}]$, excluding an exact value
of the remainder term $O(\log^{-1}\nu)$, of course.  Consequently, $\Delta(\nu):=\big|\E[Z_{\nu}]-\widetilde{E}(\nu)\big|$ satisfies
\begin{equation}\label{3.07}
\Delta(\nu)\le\tfrac{1}{(\nu-1)h_{\nu-1}}\sum_{i=1}^{\nu-1}\tfrac{(i-1)\Delta(i)}{\nu-i}+O(\log^{-1}\nu),\quad \Delta(1)=0.
\end{equation}
With $\mathcal U_{\nu}:=(\nu-1)\Delta(\nu)$, the resulting equation is a special case of the later equation \eqref{4.0531} with the remainder term $O(\nu^{t-1}\log^{-1}\nu)$, when $t=2$. Applying the bound for the solution proved there, we obtain that $\mathcal U_{\nu}=O(\nu)$, or that $\Delta(\nu)=O(1)$. Thus
\[
\E[Z_{\nu}]=A\log^2\nu+B\log\nu+O(1).
\]
Combining this formula  with \eqref{3.061}, $r_n=\tfrac{\E[Z_{\nu}]-\E^2[D_n]}{\text{Var}(D_n)}$ and $\E[D_n]=\zeta^{-1}(2)\log n+c_0+O(n^{-1})$, we compute
\begin{multline*}
r_n= \frac{\zeta^{-2}(2)\log^2n+2c_0\zeta^{-1}(2)\log n-\bigl(\zeta^{-1}(2)\log n+c_0\bigr)^2+O(1)}{\tfrac{2\zeta(3)}{\zeta^3(2)}\log n +O(1)}=O(\log^{-1}n).\\
\end{multline*}
\end{proof}

\noindent{\bf Note.\/} We do not need the h-ansatz in the rest of the paper.

\subsection{Bounding the time-height of the random tree.}
\label{sec:Rtail}

Consider now  the time-height  $\mathcal D_n$  of the random tree itself, 
 that is the maximum leaf time-height.
 We re-state Theorem \ref{thmD}, together with a tail bound on $D_n$. 

\begin{proposition}\label{0.915} {\bf (i)\/} For some $\rho>0$ and all $\eps\in (0,\pi^2/6-1)$, 
\[
\P\Bigl(D_n\ge \tfrac{6}{\pi^2}(1+\eps)\log n\Bigr)=O(n^{-\rho\eps}).
\]
{\bf (ii)\/} For some $\rho^\prime$ and all $\eps\in (0,1)$, 
\[
\P\Bigl(\mathcal D_n\ge 2(1+\eps)\log n\Bigr)=O(n^{-\rho^\prime\eps}).
\]
\end{proposition}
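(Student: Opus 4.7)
The core is a power-law bound on the Laplace transform $\phi_n(u) := \E[e^{uD_n}]$, from which (i) follows by Markov's inequality and (ii) follows by a union bound over the $n$ leaves. For each $u \in (0,1)$, I would prove by induction on $n$ (from the recurrence \eqref{2}) that $\phi_n(u) \le C(u)\,n^{\alpha(u)}$, where $\alpha(u)$ is chosen slightly above the unique positive solution of $\psi(\alpha+1) + \ga = u$, so that $\alpha(u) = \tfrac{6}{\pi^2}\,u + O(u^2)$ as $u \to 0^+$ and $\alpha(u) \to 1$ as $u \to 1^-$. Substituting the inductive hypothesis into \eqref{2} reduces the step, via the identity
\[
\sum_{k=1}^{n-1}\tfrac{k^{\alpha}}{n-k} = n^{\alpha}\,h_{n-1} - n^{\alpha-1}\sum_{k=1}^{n-1}\tfrac{1-(k/n)^{\alpha}}{1-k/n}
\]
and a monotonicity-based Riemann-sum comparison $\tfrac{1}{n}\sum_{k=1}^{n-1}\tfrac{1-(k/n)^{\alpha}}{1-k/n} \ge \int_0^1\tfrac{1-x^\alpha}{1-x}\,dx - O(n^{-1}) = \psi(\alpha+1)+\ga - O(n^{-1})$, to the inequality $\psi(\alpha(u)+1)+\ga \ge u + O(n^{-1})$, which holds for $n$ beyond some $n_0(u)$ by construction; the finitely many smaller $n$ are absorbed into $C(u)$ using the a priori finiteness of $\phi_n(u)$ whenever $u < h_{n-1}$.

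For (i), Markov gives $\P(D_n \ge t) \le C(u)\,n^{\alpha(u) - ut/\log n}$, and with $t = \tfrac{6}{\pi^2}(1+\eps)\log n$ the exponent equals $-\tfrac{6u}{\pi^2}\eps + [\alpha(u) - \tfrac{6u}{\pi^2}]$, the bracket being $O(u^2)$; taking $u$ small and linear in $\eps$ makes the exponent $\le -\rho\eps$ for some $\rho > 0$. For (ii), the union bound $\P(\mathcal D_n \ge t) \le n\,\P(D_n \ge t)$ (valid because, $D_n$ being the height of a uniformly random leaf, $\sum_{i=1}^{n}\P(D_n^{(i)} \ge t) = n\,\P(D_n \ge t)$) gives $\P(\mathcal D_n \ge t) \le C(u)\,n^{1 + \alpha(u) - ut/\log n}$. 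For $t = 2(1+\eps)\log n$, I would take $u = 1 - c_1\eps$ with $c_1 > 0$ small: using the expansion $\alpha(1 - \delta) = 1 - \delta/(\zeta(2)-1) + O(\delta^2)$ at $\delta = c_1\eps$, the exponent becomes
\[
-2\eps + c_1\,\bigl(2 - \tfrac{1}{\zeta(2)-1}\bigr)\eps + O(\eps^2),
\]
and since $2 - 1/(\zeta(2)-1) \approx 0.45 > 0$, choosing $c_1$ small enough makes this $\le -\rho'\eps$ for some $\rho' > 0$. The (finite, $\eps$-dependent) constant $C(u) = O(1/(1-u)) = O(1/\eps)$ is absorbed into the implicit $O(\cdot)$ constant.

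The technical heart is Step 1: choosing $\alpha(u)$ strictly above the critical value $(\psi(\cdot+1)+\ga)^{-1}(u)$ so as to absorb the $O(n^{\alpha-1})$ residue from the Riemann-sum comparison, yet close enough to preserve both the sharp leading behavior $\alpha(u) = \tfrac{6}{\pi^2}\,u + O(u^2)$ as $u \to 0^+$ needed for (i) and the sharp expansion $\alpha(1-\delta) = 1 - \delta/(\zeta(2)-1) + O(\delta^2)$ as $\delta \to 0^+$ needed for (ii). The constraint $u < 1$ forced by the pole of $\phi_2(u) = 1/(1-u)$ prevents taking $u > 1$ directly in Step 3; instead, the limit $u \to 1^-$ with $\delta = c_1\eps$ exploits the strict positivity of the gap $2 - 1/(\zeta(2)-1)$ between the threshold slope $2$ (from $t = 2(1+\eps)\log n$) and the critical slope $\alpha'(1) = 1/(\zeta(2)-1)$ to recover a bound with exponent linear in $\eps$, explaining the specific constant $2$ appearing in the statement of (ii).
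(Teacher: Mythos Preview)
Your approach is essentially the paper's: prove $\phi_n(u)\le C(u)\,n^{\alpha}$ by induction on the recurrence \eqref{2}, then apply Markov's inequality (and, for (ii), a union bound over the $n$ leaves). The differences are only in bookkeeping. The paper expands $(k/\nu)^{u\alpha}=\exp\bigl(u\alpha\log(k/\nu)\bigr)$ in powers of $u$ and picks up the constant via $\int_0^1\tfrac{-\log x}{1-x}\,dx=\zeta(2)$, whereas you package the same computation through the digamma identity $\int_0^1\tfrac{1-x^{\alpha}}{1-x}\,dx=\psi(\alpha+1)+\gamma$; and for (ii) the paper takes the convenient shortcut $u\alpha=1$ (i.e.\ $\alpha=1+\eps/2$, $u=1/\alpha$), which makes $\sum_k (k/\nu)^{u\alpha}/(\nu-k)=\sum_k (k/\nu)/(\nu-k)$ explicitly summable, in place of your perturbative expansion of $\alpha(u)$ near $u=1$. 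One small correction: your estimate $C(u)=O(1/(1-u))$ is too optimistic, since $C(u)$ must absorb $\phi_\nu(u)$ for every $\nu$ below the induction threshold $n_0$, and $n_0$ itself grows as the slack $\psi(\alpha(u)+1)+\gamma-u$ shrinks; but this affects only the size of the $\eps$-dependent implied constant, not the exponent, so the argument still goes through.
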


 \begin{proof} {\bf (i)\/} Since the tree with $\nu$ leaves has $\nu - 1$ non-leaf vertices, rather crudely $D_{\nu}$ is stochastically dominated by the sum of $\nu-1$ 
independent exponentials with rate $1$. Therefore, for $u<1$, the Laplace transform $\phi_{\nu}(u):=\E[e^{u D_{\nu}}]$ is bounded above by $(1-u)^{-\nu}$. 
Recall \eqref{2}: 
 \begin{equation*}
\phi_{\nu}(u)=\tfrac{1}{h_{\nu-1}-u}\sum_{k=1}^{\nu-1}\tfrac{\phi_k(u)}{\nu-k}, \quad \nu\ge 2.
\end{equation*}
Pick $\eps'<\eps$ and introduce $\a= \tfrac{6}{\pi^2}(1+\eps')$ (so $\alpha <1$) and $\psi_{\nu}(u)=\exp\bigl(u\a\log \nu\bigr)$. Let us prove that 
\begin{equation}\label{3.051}
\psi_{\nu}(u)\ge\tfrac{1}{h_{\nu-1}-u}\sum_{k=1}^{\nu-1}\tfrac{\psi_k(u)}{\nu-k},
\end{equation}
 if $u\in (0,1)$ is sufficiently small, and $\nu>1$ sufficiently large.

First note that
\[
\psi_k(u)=\psi_{\nu}(u)\exp\bigl(u\a\log(k/\nu)\bigr),\quad k\le\nu.
\]
Therefore
\begin{multline*}
\tfrac{1}{\psi_{\nu}(u)(h_{\nu-1}-u)}\sum_{k=1}^{\nu-1}\tfrac{\psi_k(u)}{\nu-k}=\tfrac{1}{h_{\nu-1}-u}\sum_{k=1}^{\nu-1}
\tfrac{\exp\bigl(u\a\log(k/\nu)\bigr)}{\nu-k}\\
=\bigl(1-\tfrac{u}{h_{\nu-1}}\bigr)^{-1}\cdot\Bigl(1+\tfrac{1}{h_{\nu-1}}
\sum_{k=1}^{\nu-1}\tfrac{\exp\bigl(u\a\log(k/\nu)\bigr)-1}{\nu-k}\Bigr)\\
=\biggl(1+\tfrac{u}{h_{\nu-1}}+O\bigl(\tfrac{u^2}{h_{\nu-1}^2}\bigr)\biggr)\\
\times\biggl[1+\tfrac{u}{h_{\nu-1}}\!\sum_{k=1}^{\nu-1}\tfrac{\a\log(k/\nu)}{\nu-k}+O\biggl(\tfrac{u^2}{h_{\nu-1}}\sum_{k=1}^{\nu-1}
\tfrac{\log^2(k/\nu)}{\nu-k}\biggr)\biggr];
\end{multline*}
(where we used $|e^x-1-x|\le x^2/2$, for $x\le 0$).  
So, since $\a=\zeta^{-1}(2)(1+\eps')$,
\begin{eqnarray}\label{3.0515}
\tfrac{1}{\psi_{\nu}(u)(h_{\nu-1}-u)}\sum_{k=1}^{\nu-1}\tfrac{\psi_k(u)}{\nu-k}
&=&1+\tfrac{u}{h_{\nu-1}}\Bigl(1+\a\sum_{k=1}^{\nu-1}\tfrac{\log(k/\nu)}{\nu-k}\Bigr)+O\bigl(\tfrac{u^2}{h_{\nu-1}}\bigr)\\
&\le& 1+\tfrac{u}{h_{\nu-1}}\Bigl(1+\a\bigl(-\zeta(2)+\tfrac{\log(\nu e)}{\nu-1}\bigr)+O\bigl(\tfrac{u^2}{h_{\nu-1}}\bigr)\Bigr)\nonumber\\
&=&1-\tfrac{u}{h_{\nu-1}}\bigl(\eps'-\zeta^{-1}(2)(1+\eps)\tfrac{\log(\nu e)}{\nu-1}\bigr)+O\bigl(\tfrac{u^2}{h_{\nu-1}}\bigr).\nonumber
\end{eqnarray}
To justify the inequality above: $\tfrac{\log x}{1-x}$ increases for $x\le 1$, so that
\begin{multline*}
\sum_{k=1}^{\nu-1}\tfrac{\log(k/\nu)}{\nu-k}\le\int_0^1\tfrac{\log x}{1-x}\,dx-\int_0^{1/\nu}\tfrac{\log x}{1-x}\,dx\\
\le -\zeta(2)+\tfrac{\nu}{\nu-1}\int_0^{1/\nu}\log(1/x)\,dx=-\zeta(2)+\tfrac{\log(\nu e)}{\nu-1}.
\end{multline*}

\noindent The big-O term is uniform over all $u\in (0,1)$ and $\nu>1$. It follows then from \eqref{3.0515} that there exist $u(\eps')\in (0,1)$ and $\nu(\eps')>1$ such that \eqref{3.051} holds for $u\in (0,u(\eps'))$ and $\nu\ge \nu(\eps')$.
Furthermore, for $u\in(0,u(\eps'))$ and $\nu\le \nu(\eps')$,
\begin{equation*}
\tfrac{\phi_{\nu}(u)}{\psi_{\nu}(u)}\le A(\eps'):=\tfrac{(1-u(\eps'))^{-\nu(\eps')}}{\exp(u(\eps')\a\log(\nu(\eps'))},
\end{equation*}
since, for $\a\le 1$,  $\tfrac{(1-u)^{-\nu}}{\exp(u\a\log\nu)}$ attains its maximum on $[0,\nu(\eps')]$ at $\nu(\eps')$. 
Combining this inequality with \eqref{3.051}, by induction on $\nu$ we obtain that $\phi_{\nu}(u)\le A(\eps')\psi_{\nu}(u)$ for all $\nu>1$
and $u\le u':= u(\eps')$. 
The rest is easy:
\begin{eqnarray*}
\P\Bigl(D_n\ge \tfrac{6}{\pi^2}(1+\eps)\log n\Bigr)&\le& \tfrac{\E[\exp(u'D_n)]}{\exp\bigl(u' \tfrac{6}{\pi^2}(1+\eps)\log n\bigr)}
\le \tfrac {A(\eps')\psi_{\nu}(u')}{\exp\bigl(u' \tfrac{6}{\pi^2}(1+\eps)\log n\bigr)}
\\
&\le &A(\eps')\exp\Bigl[u' \bigl(\a-\tfrac{6}{\pi^2}(1+\eps)\bigr)\log n\Bigr]=\frac{A(\eps')}{n^{\tfrac{6u'}{\pi^2}(\eps-\eps')}}.
\end{eqnarray*}

{\bf (ii)\/} Predictably, we will use the union bound, which makes it necessary to upper-bound 
$\P\bigl(D_n\ge 2(1+\eps)\log n\bigr)$. To this end, we use a cruder version of the argument in the part {\bf (i)\/}. Set
$\a=1+\eps/2$ and choose $u=\tfrac{1}{\a}$. Denoting $z_{\nu}=u/h_{\nu-1}$ we bound
\begin{multline*}
\tfrac{1}{\psi_{\nu}(u)(h_{\nu-1}-u)}\sum_{k=1}^{\nu-1}\tfrac{\psi_k(u)}{\nu-k}=\tfrac{1}{h_{\nu-1}-u}\sum_{k=1}^{\nu-1}
\tfrac{\exp\bigl(u\a\log(k/\nu)\bigr)}{\nu-k}\\
=\tfrac{h_{\nu-1}}{h_{\nu-1}-u}\cdot\tfrac{1}{h_{\nu-1}}\sum_{k=1}^{\nu-1}\tfrac{k/\nu}{\nu-k}
=\tfrac{h_{\nu-1}}{h_{\nu-1}-u}\cdot\biggl(1-\tfrac{\nu-1}{\nu h_{\nu-1}}\biggr)^{u\a}\\
\le\exp\bigl(-\log(1-z_{\nu})-z_{\nu}\tfrac{\a(\nu-1)}{\nu}\bigr).
\end{multline*}
Since $z_{\nu}\to 0$, the last expression is below $1$ for $\nu\in [\nu(\a),n]$. Therefore, arguing closely to the part {\bf (i)\/}, we see that $\phi_n(u)=O(\psi_n(u))$. Consequently
\[
\P\bigl(D_n\ge 2(1+\eps)\log n\bigr)=O\biggl(\tfrac{\psi_n(u)}{\exp\bigl(2u(1+\eps)\log n\bigr)}\biggr)\\
=O\biggl(n^{-\tfrac{2(1+\eps)}{1+\eps/2}+1}\biggr),
\]
implying, by the union bound, that
\begin{multline*}
\P\bigl(\mathcal D_n\ge 2(1+\eps)\log n\bigr)\le n\P\bigl(D_n\ge 2(1+\eps)\log n\bigr)
=O\Bigl(n^{-\tfrac{2(1+\eps)}{1+\eps/2}+2}\Bigr)=O\Bigl(n^{-\tfrac{\eps}{1+\eps/2}}\Bigr).\\
\end{multline*}
\end{proof}

\subsection{Asymptotic normality of $D_n$.}
\label{sec:Dnorm}   
Here is one part of Theorem \ref{thmG}.
\begin{proposition}\label{0.919} In distribution, and with all of its moments, 
\[
\frac{D_n-\zeta^{-1}(2)\log n}{\sqrt{\tfrac{2\zeta(3)}{\zeta^3(2)}\log n}}\Longrightarrow \mathrm{Normal}(0,1).
\] 
\end{proposition}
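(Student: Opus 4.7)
The plan is to perform a two-sided recursive sandwich on the Laplace transform $\phi_n(u)=\E[e^{uD_n}]$, in the spirit of Proposition~\ref{0.915}, but with a \emph{quadratic}-in-$u$ ansatz for the exponent, and then specialize at $u=s/\sqrt{b_n}$ to extract convergence of the centered Laplace transform to $e^{s^{2}/2}$. Throughout, $\alpha:=\zeta^{-1}(2)$, $\beta_{0}:=\zeta(3)/\zeta^{3}(2)$ and $b_n:=2\beta_{0}\log n$ are the asserted mean and (half-)variance rates.

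\emph{Ansatz and matching.} Plugging $\psi_\nu(u):=\exp(f(u)\log\nu)$ into \eqref{2} and using Lemma~\ref{L:nk} together with the analogous estimate $\sum_{k=1}^{\nu-1}\log^{2}(k/\nu)/(\nu-k)=2\zeta(3)+O(n^{-1}\log^{2}\!n)$ extracted from the proof of Proposition~\ref{thm0.91}, I compute
\[
\sum_{k=1}^{\nu-1}\tfrac{(k/\nu)^{f(u)}}{\nu-k}=h_{\nu-1}-\zeta(2)f(u)+\zeta(3)f(u)^{2}+O\!\bigl(|f(u)|^{3}+n^{-1}\log^{2}\!n\bigr).
\]
Matching this bracket with $h_{\nu-1}-u$ (the denominator in \eqref{2}) forces $\zeta(2)f(u)-\zeta(3)f(u)^{2}=u+O(u^{3})$, which inverts to $f(u)=\alpha u+\beta_{0}u^{2}+O(u^{3})$. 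The quadratic coefficient $\beta_{0}$ is precisely half the variance constant of Theorem~\ref{thmA}, emerging here \emph{without} the $h$-ansatz.

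\emph{Rigorous sandwich and rescaling.} For $\delta\in(0,\beta_{0})$, set $f^{\pm}(u):=\alpha u+(\beta_{0}\pm\delta)u^{2}$ and $\psi^{\pm}_\nu(u):=\exp(f^{\pm}(u)\log\nu)$. Promoting the Taylor equalities above to the one-sided inequalities $1+x\le e^{x}\le 1+x+x^{2}/2$ (valid on $x\le 0$) and their partial-sum analogues for $x\ge 0$, I verify that for $|u|\le u_{0}(\delta)$ and $\nu\ge\nu_{0}(\delta,u)$, $\psi^{+}$ is a super-solution and $\psi^{-}$ a sub-solution of \eqref{2}, the decisive sign arising from $\mp\delta\zeta(2)u^{2}$ (using $\zeta(2)\beta_{0}=\zeta(3)/\zeta^{2}(2)$); since this error is even in $u$, both signs of $u$ are covered simultaneously. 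Induction on $\nu$, with positive multiplicative constants $A^{\pm}(u)$ absorbing the base range $\nu\le\nu_{0}(\delta,u)$, gives $A^{-}(u)\psi^{-}_\nu(u)\le\phi_\nu(u)\le A^{+}(u)\psi^{+}_\nu(u)$. Setting $u_n:=s/\sqrt{b_n}\to 0$ and using $(\beta_{0}\pm\delta)u_n^{2}\log n=\tfrac{s^{2}}{2}(1\pm\delta/\beta_{0})$, this becomes
\[
A^{-}(u_n)\,e^{\frac{s^{2}}{2}(1-\delta/\beta_{0})}\le\E\!\bigl[e^{s\widetilde D_n}\bigr]\le A^{+}(u_n)\,e^{\frac{s^{2}}{2}(1+\delta/\beta_{0})}.
\]
Granted $A^{\pm}(u_n)\to 1$ (see below), letting $n\to\infty$ then $\delta\to 0$ yields $\E[e^{s\widetilde D_n}]\to e^{s^{2}/2}$ on a neighborhood of $s=0$, hence distributional convergence to $N(0,1)$. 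Convergence of all moments follows since the same sandwich forces $\sup_n\E[e^{s_{0}|\widetilde D_n|}]<\infty$ for some $s_{0}>0$, giving uniform integrability of every power of $\widetilde D_n$.

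\emph{Main obstacle.} The technical heart is justifying $A^{\pm}(u_n)\to 1$. Since $\nu_{0}(\delta,u)\to\infty$ as $u\to 0$ (concretely $\nu_{0}(\delta,u_n)=O(\log n/\delta)$), the base-case supremum runs over an expanding window of $\nu$, on which the crude Gamma-type bound $\phi_\nu(u)\le(1-u)^{-\nu}$ from Proposition~\ref{0.915} is far too weak. The fix is a preliminary, non-sharp Laplace estimate obtained from the moment asymptotics already available \emph{without} the $h$-ansatz: from $\E[D_\nu]=\alpha\log\nu+O(1)$ (Proposition~\ref{1.5}), $\var(D_\nu)=O(\log\nu)$ (derived directly from Lemma~\ref{lem0.9} with the crude bound $(\E[D_\nu]-\E[D_k])^{2}=O(\log^{2}(\nu/k))$), and the analogous polynomial-in-$\log\nu$ bounds on higher absolute central moments, one obtains $\phi_\nu(u)\le\exp\!\bigl(\alpha u\log\nu + Cu^{2}\log\nu + O(u)\bigr)$ uniformly for $|u|\le u_{0}$ and $\nu\ge 1$, with $C$ some absolute constant (much larger than $\beta_{0}$). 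For $u=u_n$ and $\nu\le\nu_{0}(\delta,u_n)=O(\log n)$, both $O(u_n)$ and $Cu_n^{2}\log\nu=O(\log\log n/\log n)$ are $o(1)$, so $\phi_\nu(u_n)/\psi^{+}_\nu(u_n)=\exp(o(1))\to 1$ uniformly on the base range, forcing $A^{\pm}(u_n)\to 1$ as required. Carrying the cubic remainder from the ansatz step uniformly through the sandwich, without eroding the sign-decisive $-\delta\zeta(2)u^{2}$ for either sign of $u$, is the remaining technical point.
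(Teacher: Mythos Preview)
Your proposal is correct and follows essentially the same strategy as the paper: sandwich the Laplace transform $\phi_\nu(u)$ between functions of the form $\exp[(c_1 u + c_2 u^2)\log\nu]$ via the recurrence \eqref{2}, extract the matching constants $\alpha=\zeta^{-1}(2)$ and $\beta_0=\zeta(3)/\zeta^3(2)$ from the integrals $\int_0^1\frac{\log^r x}{1-x}\,dx$, and invoke Curtiss.

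Two tactical differences are worth noting. First, you perturb the \emph{quadratic} coefficient by a fixed $\pm\delta$ and take a double limit $n\to\infty$ then $\delta\to 0$; the paper instead perturbs the \emph{linear} coefficient by $\pm u^b$ with $b\in(1,2)$, which makes the comparison function already asymptotic to the target and avoids the outer limit. Second, for the base range $\nu\le\nu_0$ the paper does not go through moment bounds at all: it runs a second, cruder sandwich with the \emph{linear} ansatz $\Psi^*_\nu(u)=1+u\alpha\log\nu$ (their display \eqref{onestar}) to conclude directly that $\phi_\nu(u)=1+O(|u|\log\nu)$ uniformly for $\nu\le\nu_n:=\lceil\exp(\log^{\delta}n)\rceil$, $\delta<1/6$. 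This is considerably cleaner than your moment route, which as written requires establishing polynomial-in-$\log\nu$ bounds on all central moments before passing to the MGF. Your plan works, but the paper's device sidesteps that entire detour; it also handles the $u<0$ case (where the exponent $f(u)\log(k/\nu)$ is \emph{positive} and unbounded, so your one-sided bound $e^x\le 1+x+x^2/2$ fails) by truncating the integral at $x=1/\nu_n$ and controlling the tail separately, rather than relying on partial-sum Taylor inequalities.
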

\noindent
In particular, this provides a proof of the first-order result
\[  
\var(D_n)\sim\tfrac{2\zeta(3)}{\zeta^3(2)}\log n,
\]
without having to rely on the h-ansatz, 
as stated in Theorem \ref{thmA}.

\begin{proof} By a general theorem due to Curtiss \cite{Cur}, it suffices to show that for $|u|=\Theta(\log^{-1/2}n)$ and properly chosen $\a_1,\,\a_2>0$, the Laplace transform $\phi_n(u)=\E[e^{uD_n}]$ satisfies
\begin{equation}\label{1}
\phi_n(u)=(1+o(1))\exp\bigl[(u\a_1+u^2\a_2)\log n\bigr].
\end{equation}
Recall from \eqref{2} that 
\begin{equation}\label{1.349}
\phi_{\nu}(u)=\tfrac{1}{h_{\nu-1}-u}\sum_{k=1}^{\nu-1}\tfrac{\phi_k(u)}{\nu-k},\quad \nu\ge 2.
\end{equation}
Define a function
\[
\Psi_{\nu}(u)=\exp\bigl[(u\a_1+u^2\a_2)\log \nu\bigr],\quad \nu\in [1,n];
\]
obviously $\Psi_1(u)=1=\phi_1(u)$. 
We will use induction on $\nu$ to prove a stronger result, namely that there exist $\a_1$ and $\a_2$ such that for $|u|=\Theta(\log^{-1/2}n)$, the ratio $\tfrac{\phi_{\nu}(u)}{\Psi_{\nu}(u)}$ converges to $1$, uniformly over $n\ge \nu\to\infty$, sufficiently fast. Pick $\delta\in (0,1/6)$, and set $\nu_n=\lceil \exp( \log^{\delta}n)\rceil$, so in particular $u\log\nu_n\to 0$. Introduce $\Psi^*_{\nu}(u):=1+u\a\log \nu$. 
For $u>0$, 
we have
\begin{multline}\label{onestar}
\tfrac{1}{(h_{\nu-1}-u)\Psi^*_{\nu}(u)}\sum_{k=1}^{\nu-1}\tfrac{\Psi^*_k(u)}{\nu-k}
=\tfrac{1}{h_{\nu-1}}\sum_{j_1,j_2\ge 0}u^{j_1+j_2}\bigl(\tfrac{1}{h_{\nu-1}}\bigr)^{j_1}(-\a\log\nu)^{j_2}\\
\times \biggl((1+u\a\log\nu)h_{\nu-1}+u\a\sum_{k=1}^{\nu-1}\tfrac{\log(k/\nu)}{\nu-k}\biggr)\\
=\Bigl(1+u\bigl(\tfrac{1}{h_{\nu-1}}-\a\log\nu\bigr)+O(\a^2u^2\log^2\nu)\Bigr)
\cdot\bigl(1+u\a\log\nu-\tfrac{u}{h_{\nu-1}}\Theta(\a)\bigr)\\
=1+\tfrac{u}{h_{\nu-1}}(1-\Theta(\a))+O\bigl((\a^2+1)u^2\log^2\nu\bigr)\\
\left\{\begin{aligned}
&>1,&&\text{if }\nu\le\nu_n,\,\a>0\text{ and small},\\
&<1,&&\text{if }\nu\le\nu_n,\,\a>0\text{ and large}.\end{aligned}\right.
\end{multline}
(For the bottom part we used $\delta<\tfrac{1}{6}$.) And the inequalities are interchanged if $u<0$. Combining this with \eqref{1.349}, we conclude that 
$\phi_{\nu}(u)=1+O(|u|\log \nu)=\exp\bigl(O(|u|\log \nu)\bigr)$, uniformly for $\nu\le \nu_n$.
So, for bounded $\a_1,\,\a_2$,
\begin{equation}\label{1.1}
\lim_{n\to\infty}\max_{\nu\le\nu_n}\big|\tfrac{\phi_{\nu}(u)}{\Psi_{\nu}(u)}-1\big|=0.
\end{equation}
Thus, we need to prove existence of $\a_1,\a_2$ such that the property above holds for $\nu\ge \nu_n$, as well.
To this end, let us determine $\a_1$ and $\a_2$ from the condition that $\Psi_{\nu}(u)$, $(\nu\in [\nu_n,n])$, satisfies the recursive inequality
\begin{equation}\label{2.32}
\Psi_{\nu}(u)\ge (\le )\tfrac{1}{h_{\nu-1}-u}\biggl(\,\sum_{k=1}^{\nu-1}\tfrac{\Psi_k(u)}{\nu-k}\biggr),\quad \nu\in [\nu_n,n].
\end{equation}
First of all, we have
\[
\Psi_k(u)=\Psi_{\nu}(u)\exp\bigl[\bigl(u\a_1 +u^2\a_2\bigr)\log(k/\nu)\bigr],\quad k\le\nu.
\]
Therefore
\begin{multline}\label{1.23}
\tfrac{1}{\Psi_{\nu}(u)(h_{\nu-1}-u)}\sum_{k=1}^{\nu-1}\tfrac{\Psi_k(u)}{\nu-k}=\tfrac{1}{h_{\nu-1}-u}\sum_{k=1}^{\nu-1}
\tfrac{\exp\bigl[\bigl(u\a_1 +u^2\a_2\bigr)\log(k/\nu)\bigr]}{\nu-k}\\
=\bigl(1-\tfrac{u}{h_{\nu-1}}\bigr)^{-1}\cdot\Bigl(1+\tfrac{1}{h_{\nu-1}}
\sum_{k=1}^{\nu-1}\tfrac{\exp\bigl[\bigl(u\a_1 +u^2\a_2\bigr)\log(k/\nu)\bigr]-1}{\nu-k}\Bigr)\\
=\bigl(1-\tfrac{u}{h_{\nu-1}}\bigr)^{-1}\cdot\biggl(1+\tfrac{1}{h_{\nu-1}}\int_0^1\tfrac{\exp\bigl[\bigl(u\a_1+u^2\a_2\bigr)
\log x\bigr]-1}{1-x}\,dx+O\bigl(\tfrac{|u|\log\nu_n}{\nu_n}\bigr)\biggr).
\end{multline}
In the final line, the bottom integral does not depend on $\nu$. 
Let us first justify the remainder term. Define $f(k/\nu)$ as the $k$-th term in the previous sum, ($k<\nu$), and, for continuity, set $f(\nu/\nu)=-\nu^{-1}(u\a_1+u^2\a_2)$.  It can be checked that 
$f^{\prime\prime}_k(k/\nu)$ does not change its sign on $[1,\nu]$. So, replacing the sum with the integral for $k$ varying continuously from $1$ to $\nu$, we introduce the error on the order of the sum of absolute values of 
\[
f(k/\nu)\Big|^{\nu}_1\quad\text{and}\quad f'_k(k/\nu)\Big|^{\nu}_1.
\]
The dominant contribution to each of these terms comes from $k=1$. Since for $\sigma\in (0,1)$  the function $\tfrac{z^{\sigma}-1}{z}$ decreases for $z\ge \sigma^{-1}\log\tfrac{1}{1-\sigma}$, we bound
\[
|f(1/\nu)|\le \tfrac{\exp(|u\a_1+u^2\a_2|\log \nu_n)-1}{\nu_n}=O\bigl(\nu_n^{-1}|u|\log\nu_n\bigr).
\]
And the bound for $|f'_k(1/\nu)|$ is even better. So the sum in question is of order $O\bigl(\tfrac{|u|\log\nu_n}{\nu_n}\bigr)$
uniformly for $\nu\ge \nu_n$. Extending the resulting integral to the full $[0,\nu]$, we introduce the second error on the order of
\begin{equation}\label{1.235}
\int_0^1\tfrac{\exp\bigl[\bigl(u\a_1+u^2\a_2\bigr)\log (k/\nu)\bigr]-1}{\nu-k}\,dk=O\bigl(\tfrac{|u|\log\nu_n}{\nu_n}\bigr).
\end{equation}
The sum of the two error terms is  $O\bigl(\nu^{-1}|u|\log\nu)$, and dividing it by $h_{\nu-1}$ we get $O\bigl(\tfrac{|u|}{\nu_n}\bigr)$.

Let us sharply estimate  the bottom integral in \eqref{1.23}.  By \eqref{1.235}, the contribution to this integral coming from $x\in (0,1/\nu_n]$ is $O(\nu^{-1}_n|u|\log\nu_n)$. And for $x\in [1/\nu_n,1]$, we have $|u|\log(1/x)\le |u|\log\nu_n\to 0$,  i.e. we can use the Taylor expansion
\begin{multline*}
\tfrac{\exp\bigl[\bigl(u\a_1+u^2\a_2\bigr)\log x\bigr]-1}{1-x}
=\tfrac{\bigl(u\a_1+u^2\a_2\bigr)\log x}{1-x}+\tfrac{\bigl(u\a_1+u^2\a_2\bigr)^2\log^2 x}{2(1-x)}
+O\bigl(\tfrac{|u|^3\log^3(1/x)}{1-x}\bigr).\\
\end{multline*}
This means that, at the price of the error term of the order $\nu^{-1}_n |u|\log\nu_n+|u|^3\int_0^1\tfrac{\log^3(1/x)}{1-x}\,dx$,
we can use the expansion above for all $x\in (0,1]$. 

So, using \eqref{2.495}, we obtain

\begin{multline*}
\tfrac{1}{h_{\nu-1}}\int_0^1\tfrac{\exp\bigl[\bigl(u\a_1+u^2\a_2\bigr)\log x\bigr]-1}{1-x}\,dx\\
=-\tfrac{\a_1\zeta(2)u}{h_{\nu-1}}+\tfrac{u^2}{h_{\nu-1}}\bigl[\a_1^2\zeta(3)-\a_2\zeta(2)\bigr] +O\bigl(\tfrac{|u|^3}{h_{\nu-1}}+\nu^{-1}_n |u|\bigr).
\end{multline*}
Consequently, for $\nu\ge \nu_n\bigl(=\lceil \exp(\log^{\delta} n)\rceil)$,
\begin{multline}\label{2.34}
\tfrac{1}{\Psi_{\nu}(u)(h_{\nu-1}-u)}\sum_{k=1}^{\nu-1}\tfrac{\Psi_k(u)}{\nu-k}
=1+\tfrac{u}{h_{\nu-1}}\bigl(1-\a_1\zeta(2)\bigr)\\
+\tfrac{u^2}{h_{\nu-1}}\bigl[\a_1^2\zeta(3)-\a_2\zeta(2)\bigr]+O\bigl(\tfrac{|u|^3}{h_{\nu-1}}+\tfrac{|u|}{\nu_n}\bigr)\\
=1+\tfrac{u}{h_{\nu-1}}\bigl(1-\a_1\zeta(2)\bigr)+O\bigl(\tfrac{|u|^3}{h_{\nu-1}}\bigr),
\end{multline}
if we select $\a_2=\tfrac{\a_1^2\zeta(3)}{\zeta(2)}$, which we certainly do. Suppose $u>0$; set $\a_1=\zeta^{-1}(2)+u^{b}$, $b\in (1,2)$. Then, uniformly for $\nu\in [\nu_n,n]$, we have
\begin{equation*}
1+\tfrac{u}{h_{\nu-1}}\bigl(1-\a_1\zeta(2)\bigr)+O\bigl(\tfrac{|u|^3}{h_{\nu-1}}\bigr)=1-\tfrac{\zeta^{-1}(2)u^{b+1}}{h_{\nu-1}}
\bigl(1+O(u^{2-b}))<1.
\end{equation*}
So, \eqref{2.34} becomes
\begin{equation*}
\tfrac{1}{h_{\nu-1}-u}\sum_{k=1}^{\nu-1}\tfrac{\Psi_k(u)}{\nu-k}\le \Psi_{\nu}(u).
\end{equation*}
This equation and the equation \eqref{1.349} together imply, by induction on $\nu\in [\nu_n,n]$, that 
$\limsup_{n\to\infty} \max_{\nu\in [\nu_n,n]}\tfrac{\phi_{\nu}(u)}{\Psi_{\nu}(u)}\le 1$. Now,
\begin{multline*}
\Psi_{\nu}(u)=\exp\bigl[(u\a_1+u^2\a_2)\log \nu\bigr]\\
=\exp\Bigl[\bigl(u\zeta^{-1}(2)+u^2\tfrac{\zeta(3)}{\zeta^3(2)}\bigr)\log\nu +O(u^{b+1}\log \nu)\Bigr]\\
\sim\exp\Bigl[\bigl(u\zeta^{-1}(2)+u^2\tfrac{\zeta(3)}{\zeta^3(2)}\bigr)\log \nu\Bigr],
\end{multline*}
since $u^{b+1}\log n=O\Bigl(\log^{-\tfrac{b-1}{2}}n\Bigr)$ and $b>1$. Therefore 
\[
\limsup_{n\to\infty}\max_{\nu\in [\nu_n,n]}\tfrac{\phi_{\nu}(u)}{\Psi_{\nu}(u)}\le 1.
\]
 Analogously, setting $\a_1=\zeta^{-1}(2)-u^b$, we have 
 \[
 \liminf_{n\to\infty}\min_{\nu\in [\nu_n,n]}\tfrac{\phi_{\nu}(u)}{\Psi_{\nu}(u)}\ge 1.
 \]
So, for $u=\Theta(\log^{-1/2}n)>0$ we have 
\[
\lim_{n\to\infty}\tfrac{\phi_n(u)}{\exp\Bigl[\bigl(u\zeta^{-1}(2)+u^2\tfrac{\zeta(3)}{\zeta^3(2)}\bigr)\log n\Bigr]}=1.
\]
The case $u<0$ is completely similar, so that the last equation holds for $u=-\Theta(\log^{-1/2}n)<0$ as well.
\end{proof}

\subsection{The moments of edge-heights of the leaves} 
\label{sec:Lmoments}
Recall that $L_n$ denotes the edge-height of a uniform random leaf. 
In this section we  prove Theorem \ref{thmB} via the two Propositions below.
\begin{proposition}\label{2new} 
\begin{equation}\label{2.1}
\E[L_n]=\tfrac{1}{2\zeta(2)}\log^2 n+\tfrac{\ga\,\zeta(2)+\zeta(3)}{\zeta^2(2)}\log n+O(1).
\end{equation}
\end{proposition}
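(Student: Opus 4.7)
The plan is to mimic the proof of Proposition~\ref{1.5} for $\E[D_n]$, adjusted for the fact that $L_n$ grows at order $\log^2 n$ rather than $\log n$. Differentiating the Laplace-transform recurrence \eqref{2.2} at $u=0$ and using $\sum_{k=1}^{\nu-1}1/(\nu-k)=h_{\nu-1}$, one obtains the fundamental moment recurrence
\[
\E[L_\nu]=1+\tfrac{1}{h_{\nu-1}}\sum_{k=1}^{\nu-1}\tfrac{\E[L_k]}{\nu-k},\qquad \nu\ge 2,\ \E[L_1]=0.
\]
Compared with \eqref{2.01}, the constant $1$ is no longer divided by $h_{\nu-1}$; this extra factor of $h_{\nu-1}\sim\log\nu$ is exactly what lifts the order of $\E[L_n]$ from $\log n$ to $\log^2 n$.

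The next step is to establish a companion to Lemma~\ref{L:nk}, namely
\[
\sum_{k=1}^{\nu-1}\tfrac{\log^2(k/\nu)}{\nu-k}=2\zeta(3)+O(\nu^{-1}\log^2\nu),
\]
via the same decomposition $\tfrac{1}{\nu-k}=\tfrac{1}{\nu}+\tfrac{k/\nu}{\nu-k}$ followed by Euler--Maclaurin \eqref{2.031}, with the integral identity $\int_0^1\tfrac{\log^2 y}{1-y}\,dy=2\zeta(3)$ supplied by \eqref{2.495}. Then, with the ansatz $\E[L_n]=A\log^2 n+B\log n+w_n$ and $\log k=\log\nu+\log(k/\nu)$, each sum $\sum_k\log^j k/(\nu-k)$ for $j=1,2$ splits into $h_{\nu-1}\log^j\nu$ plus corrections quantified by Lemma~\ref{L:nk} and the companion just obtained. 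Matching the $\log^2\nu$ term in the recurrence forces $A=\tfrac{1}{2\zeta(2)}$; for the $\log\nu$ term one then expands $\tfrac{\log\nu}{h_{\nu-1}}=1-\tfrac{\ga}{h_{\nu-1}}+O(\log^{-2}\nu)$, which injects Euler's constant and yields $B=\tfrac{\ga\zeta(2)+\zeta(3)}{\zeta^2(2)}$, matching the claim.

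With these constants, the residual satisfies
$w_n=R_n+\tfrac{1}{h_{n-1}}\sum_{k=1}^{n-1}w_k/(n-k)$,
where $R_n\to 0$ at a rate determined by the two sum estimates above together with $h_{\nu-1}-\log\nu-\ga=O(1/\nu)$. To conclude $|w_n|=O(1)$ I would follow the sub/supersolution strategy of Proposition~\ref{1.5}(ii)--(iii): construct bounded concave test sequences $\psi^{\pm}_n=C_1\pm C_2\,g(n)$ (with a suitable $g(n)\downarrow 0$) that satisfy the averaging-operator recursion with inequalities in the appropriate direction, and conclude by induction on $n$ that $w_n$ is sandwiched between $\psi^-_n$ and $\psi^+_n$.

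The main obstacle is this final step. The forcing $R_n$ decays only at rate $O((\log n)/n)$, which is \emph{not} absolutely summable, so the naive bound $|w_n|\le|w_{n-1}|+|R_n|$ gives no uniform control. Success requires exploiting the averaging structure: the weights $1/((n-k)h_{n-1})$ sum to $1$ and the homogeneous equation $\theta_n=\tfrac{1}{h_{n-1}}\sum_k\theta_k/(n-k)$ is solved by any constant, so iterating the recurrence replaces $R_n$ by a running weighted average that decays faster than $R_n$ itself. One must therefore tailor the comparison test-function so that its image under the averaging operator cancels $R_n$ to leading order --- the same conceptual point made in the Note following Proposition~\ref{1.5}(i), but with a slightly more delicate test function than the $x_n=(n-1)/n$ that sufficed there.
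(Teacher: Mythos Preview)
Your approach is correct and matches the paper's proof essentially step for step: the same moment recurrence, the same ansatz $A\log^2\nu+B\log\nu+u_\nu$, the same two sum estimates (the paper already has your ``companion'' identity as \eqref{2.0315}), and the same determination of $A,B$ via $\log\nu=h_{\nu-1}-\ga+O(\nu^{-1})$. For the residual step the paper arrives at $u_\nu=\tfrac{1}{h_{\nu-1}}\bigl(\sum_k u_k/(\nu-k)+O(\nu^{-1}\log^2\nu)\bigr)$ and, rather than building a new test function, simply invokes the supersolution $z_n=c\bigl(\log^2 14-\tfrac{\log^2(14n)}{n}\bigr)$ already constructed in the proof of Proposition~\ref{thm0.91} (from \eqref{3.039} onward), which handles exactly this forcing.
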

\begin{proof} 
The straightforward recurrence  for $\E[L_{\nu}]$ is
\begin{equation}\label{2.3}
\E[L_{\nu}]=1+\tfrac{1}{h_{\nu-1}}\sum_{k=1}^{\nu-1}\tfrac{\E[L_k]}{\nu-k}.                            
\end{equation}
Write $\E[L_{\nu}]=A \log^2 \nu+B\log \nu +u_{\nu}$, so that $u_1=0$. We need to show that $u_{\nu}=O(1)$, if we select
$A$ and $B$ appropriately.  (Sure enough, these will be the constants in the claim.) Using \eqref{2.3}, we have
\begin{multline}\label{2.37}
u_{\nu}=1+\tfrac{1}{h_{\nu-1}}\sum_{k\in [\nu-1]}\tfrac{u_k}{\nu-k}+A\biggl(\tfrac{1}{h_{\nu-1}}\sum_{k\in [\nu-1]}\tfrac{\log^2 k}{\nu-k}-\log^2\nu\biggr)\\
+B\biggl(\tfrac{1}{h_{\nu-1}}\sum_{k\in[\nu-1]}\tfrac{\log k}{\nu-k}-\log \nu\biggr).
\end{multline}
Here, by \eqref{2.034},
\begin{multline*}
\tfrac{1}{h_{\nu-1}}\sum_{k\in[\nu-1]}\tfrac{\log k}{\nu-k}-\log \nu=\tfrac{1}{h_{\nu-1}}\sum_{k\in [\nu-1]}\tfrac{\log(k/\nu)}{\nu-k}
=-\tfrac{\zeta(2)}{h_{\nu-1}}+\tfrac{\log(2\pi e)}{\nu h_{\nu-1}} +O(\nu^{-2}),\\
\end{multline*}
and, combining the equation above with \eqref{2.0315}, we also have
\begin{multline*}
\tfrac{1}{h_{\nu-1}}\sum_{k\in [\nu-1]}\tfrac{\log^2 k}{\nu-k}-\log^2\nu=\tfrac{1}{h_{\nu-1}}\sum_{k\in [\nu-1]}
\tfrac{\log(k/\nu)\cdot(\log(k/\nu)+2\log\nu)}{\nu-k}\\
=\tfrac{2\zeta(3)}{h_{\nu-1}}+O(\nu^{-1}\log\nu)+2\bigl(-\tfrac{\zeta(2)\log\nu}{h_{\nu-1}}+\tfrac{\log(2\pi e)\log\nu}{\nu h_{\nu-1}}+
O(\nu^{-2}\log\nu)\biggr).\\
\end{multline*}
Plugging the estimates above into \eqref{2.37} and using $\log\nu=h_{\nu-1}-\ga+O(\nu^{-1})$, we get 
\begin{multline*}
u_{\nu}=\tfrac{1}{h_{\nu-1}}\sum_{k\in [\nu-1]}\tfrac{u_k}{\nu-k}
+(1-2A\zeta(2))+\tfrac{1}{h_{\nu-1}}\bigl[2A(\ga\zeta(2)+\zeta(3))-B\zeta(2)\bigr]+O(\nu^{-1}\log\nu).\\
\end{multline*}
So, selecting $A$ and $B$ such that the $(A,B)$-dependent coefficients are both zeros, i. e. $A=\tfrac{1}{2\zeta(2)}$, $B=\tfrac{\ga\zeta(2)+\zeta(3)}{\zeta(2)}$, we arrive at
\[
u_{\nu}=\tfrac{1}{h_{\nu-1}}\biggl(\sum_{k\in [\nu-1]}\tfrac{u_k}{\nu-k}+O(\nu^{-1}\log^2\nu)\biggr).
\]
From the proof of Proposition \ref{thm0.91} (starting with \eqref{3.039}), it follows that $u_{\nu}=O(1)$.
\end{proof}

\begin{proposition}\label{new3prop}
$
\var(L_n) =\tfrac{2\zeta(3)}{3\zeta^3(2)}\log^3 n+O(1).
$
\end{proposition}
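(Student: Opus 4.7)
The plan is to parallel the proof of Proposition \ref{thm0.91} for $\var(D_n)$, but starting from an analogue of Lemma \ref{lem0.9} adapted to $L_n$. First I would differentiate \eqref{2.2} twice at $u = 0$. This gives
\[
\E[L_\nu^2] = 1 + \tfrac{2}{h_{\nu-1}}\sum_{k=1}^{\nu-1}\tfrac{\E[L_k]}{\nu-k}+\tfrac{1}{h_{\nu-1}}\sum_{k=1}^{\nu-1}\tfrac{\E[L_k^2]}{\nu-k},
\]
and using \eqref{2.3} the double sum in the middle equals $2(\E[L_\nu] - 1)$. Substituting $\E[L_k^2] = v_k + \E^2[L_k]$ and subtracting $\E^2[L_\nu]$, the $\E^2[L_k]$ part collapses in the same way as in Lemma \ref{lem0.9} (using \eqref{2.3} to rewrite $\E^2[L_\nu]$), producing the recurrence
\[
v_\nu = -1 + \tfrac{1}{h_{\nu-1}}\sum_{k=1}^{\nu-1}\tfrac{v_k + (\E[L_\nu]-\E[L_k])^2}{\nu-k}.
\]
The stray $-1$ is precisely the ``shocking'' additive term announced in the Note after Lemma \ref{lem0.9}; it will be absorbed into the $O(1)$ remainder and will not affect the leading coefficient.

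Next I would compute the dominant contribution of the $(\E[L_\nu]-\E[L_k])^2$ term, using Proposition \ref{2new} in the form $\E[L_m] = \tfrac{1}{2\zeta(2)}\log^2 m + O(\log m)$. Setting $x = k/\nu$,
\[
(\E[L_\nu] - \E[L_k])^2 = \tfrac{1}{4\zeta^2(2)}\log^2 x\,\bigl(2\log\nu + \log x\bigr)^2 + O(\log^3 \nu)
\]
uniformly in $k$. Expanding the square and passing from a Riemann sum to an integral (the factor $\log^2 x$ kills the boundary singularity at $x = 1$) together with \eqref{2.495} yields
\[
\tfrac{1}{h_{\nu-1}}\sum_{k=1}^{\nu-1}\tfrac{(\E[L_\nu]-\E[L_k])^2}{\nu-k}
= \tfrac{1}{4\zeta^2(2)\,h_{\nu-1}}\bigl(8\zeta(3)\log^2\nu + O(\log\nu)\bigr)
= \tfrac{2\zeta(3)}{\zeta^2(2)}\log\nu + O(1),
\]
where the last step uses $h_{\nu-1} = \log\nu + \gamma + O(\nu^{-1})$.

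I would then write $v_\nu = C\log^3\nu + u_\nu$ with $C = \tfrac{2\zeta(3)}{3\zeta^3(2)}$ and verify $u_\nu = O(1)$. Inserting the trial into the recurrence and expanding $\log^3 k = (\log\nu + \log(k/\nu))^3$ trinomially, the cubic term reproduces $C\log^3\nu$, while the cross terms combine via Lemma \ref{L:nk} and \eqref{2.0315} with $\int_0^1\tfrac{\log^3 x}{1-x}dx = -6\zeta(4)$ to give
\[
\tfrac{1}{h_{\nu-1}}\sum_{k=1}^{\nu-1}\tfrac{C\log^3 k}{\nu-k} = C\log^3\nu - 3C\zeta(2)\log\nu + O(1).
\]
The choice $C = \tfrac{2\zeta(3)}{3\zeta^3(2)}$ is exactly what makes $-3C\zeta(2)$ cancel the coefficient $\tfrac{2\zeta(3)}{\zeta^2(2)}$ from the previous step, so the $\log\nu$ terms vanish and $u_\nu$ satisfies an inequality of the form
\[
|u_\nu| \le \tfrac{c}{h_{\nu-1}} + \tfrac{1}{h_{\nu-1}}\sum_{k=1}^{\nu-1}\tfrac{|u_k|}{\nu-k} + O(\nu^{-1}\log^2\nu).
\]
Comparing this to the sequence $z_n = c'(\log^2(14) - \log^2(14n)/n)$ used at the end of the proof of Proposition \ref{thm0.91} (which dominates any $O(n^{-1}\log^2 n)$ inhomogeneity while remaining bounded), an induction on $\nu$ gives $u_\nu = O(1)$.

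The main obstacle I anticipate is not the algebraic identification of $C$, which is essentially forced by matching $\log\nu$-coefficients, but the careful bookkeeping of lower-order terms in the expansion of $(\E[L_\nu] - \E[L_k])^2$: because $\E[L_m]$ already contains a $\log m$ correction, naïve bounds lose powers of $\log\nu$ near $k = \nu$ and one must check that the non-leading pieces really contribute only $O(\log\nu)$ to the sum, so that after division by $h_{\nu-1}$ they are absorbed into $O(1)$. The rest is a reuse of tools already deployed for $\E[D_n]$ and $\var(D_n)$, so no new ansatz is required.
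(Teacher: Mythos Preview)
Your proposal is correct and takes essentially the same route as the paper: derive the variance recurrence with the extra $-1$ (this is the paper's Lemma~\ref{lem0.91}), evaluate the forcing term via Proposition~\ref{2new} and \eqref{2.495} to get $\tfrac{2\zeta(3)}{\zeta^2(2)}\log\nu + O(1)$, plug in the ansatz $\tfrac{2\zeta(3)}{3\zeta^3(2)}\log^3\nu$, expand $\log^3 k$ trinomially to see that this choice cancels the $\log\nu$ coefficient, and close by induction. One small caveat: your displayed residual inequality carries inhomogeneity $c/h_{\nu-1}$, whereas your own bookkeeping (forcing term $O(1)$, ansatz error $O(1)$) only produces an $O(1)$ inhomogeneity, and the $z_n$ barrier you invoke from Proposition~\ref{thm0.91} was designed for $O(\nu^{-1}\log^2\nu)$ terms, not $O(1/h_{\nu-1})$; the paper is equally terse at exactly this closing step, writing only ``using induction we obtain that $|\bv_\nu-\mathcal V_\nu|\le C$''.
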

\begin{proof} {\bf (i)\/} The key is 
\begin{lemma}\label{lem0.91} Setting $\bv_n:=\var(L_n)$, we have
\begin{equation}\label{new3}
\bv_n=-1+\tfrac{1}{h_{n-1}}\sum_{k=1}^{n-1}\tfrac{\bv_k+(\E[L_n]-\E[L_k])^2}{n-k}.
\end{equation}
\end{lemma}
\noindent {\bf Note.\/} In particular, $\bv_2=0$ as it should be, since $L_2\equiv 1$, unlike $D_2$ which is distributed exponentially with rate $1$.
\begin{proof} 
Differentiating twice both sides of \eqref{2.2} at $u=0$, we get 
\begin{multline*}
\E[L_{\nu}^2]=1+\tfrac{1}{h_{\nu-1}}\sum_{k\in [\nu-1]}\tfrac{\E[L_k^2]}{\nu-k}+\tfrac{2}{h_{\nu-1}}\sum_{k\in [\nu-1]}\tfrac{\E[L_k]}{\nu-k}\\
=2\E[L_{\nu}]-1+\tfrac{1}{h_{n-1}}\sum_{k=1}^{n-1}\tfrac{\E[L_k^2]}{n-k}\\
=2\E[L_{\nu}]-1+\tfrac{1}{h_{\nu-1}}\sum_{k=1}^{\nu-1}\tfrac{\E^2[L_k]}{\nu-k}+\tfrac{1}{h_{\nu-1}}\sum_{k=1}^{\nu-1}\tfrac{\bv_k}{\nu-k}.
\end{multline*}
Since $\bv_{\nu}=\E[L_{\nu}^2]-\E^2[L_{\nu}]$, the equation above becomes
\[
\bv_{\nu}=2E[L_{\nu}]-1+\tfrac{1}{h_{n-1}}\sum_{k=1}^{\nu-1}\tfrac{\bv_k+\E^2[L_k]}{\nu-k}-\E^2[L_{\nu}],
\]
and it is easy to check that this equation is equivalent to the claim.
\end{proof}

{\bf (ii)\/} Using Proposition \ref{2new}, we compute, for
 $A=\tfrac{1}{2\zeta(2)}$, $B=\tfrac{\ga\zeta(2)+\zeta(3)}{\zeta(2)}$,
\begin{multline*}
\bigl(\E[L_{\nu}]-\E[L_k]\bigr)^2=\Bigl(A\bigl(\log^2\nu-\log^2k\bigr)+B\bigl(\log\nu-\log k\bigr)+O(1)\Bigr)^2\\
=\bigl[2A(\log(k/\nu))\log\nu\bigr]^2+O\bigl[\mathcal{P}(\log(\nu/k))\log \nu\bigr],
\end{multline*}
where $\mathcal{P}(\eta)$ is a fourth-degree polynomial. Therefore, invoking \eqref{2.0315}, we have
\begin{multline*}
\tfrac{1}{h_{\nu-1}}\sum_{k=1}^{\nu-1}\tfrac{(\E[L_{\nu}]-\E[L_k])^2}{\nu-k}=\tfrac{4A^2\log^2\nu}{h_{\nu-1}}\sum_{k=1}^{\nu-1}\tfrac{\log^2(k/\nu)}{\nu-k}+O(1)\\
= \tfrac{8A^2\zeta(3)\log^2\nu}{h_{\nu-1}}+O(1)=8A^2\zeta(3)\log\nu+O(1).
\end{multline*}
So, since $A=\tfrac{1}{2\zeta(2)}$, the equation \eqref{new3} becomes
\begin{equation}\label{new3.5}
\bv_{\nu}=\tfrac{2\zeta(3)}{\zeta(2)^2}\log\nu+O(1)+\tfrac{1}{h_{\nu-1}}\sum_{k=1}^{\nu-1}\tfrac{\bv_k}{\nu-k}.
\end{equation}
Let us use this recurrence to show that, for appropriately chosen $\bA$, 
\[
\bv_\nu={\mathcal V}_{\nu}+O(1),\quad {\mathcal V}_{\nu}:=\bA \log^3\nu.
\]
Here $O(1)$ is uniform over all $\nu\ge 2$. We compute
\begin{multline*}
\tfrac{1}{h_{\nu-1}}\sum_{k=1}^{\nu-1}\tfrac{\log^3 k}{\nu-k}=\tfrac{1}{h_{\nu-1}}
\sum_{k=1}^{\nu-1}\tfrac{\bigl(\log \nu+\log(k/\nu)\bigr)^3}{\nu-k}\\
=\tfrac{1}{h_{\nu-1}}\biggl(\log^3\nu\, h_{\nu-1}+3\log^2\nu\sum_{k=1}^{\nu-1}\tfrac{\log(k/\nu)}{\nu-k}\\
\qquad\qquad+3\log\nu\sum_{k=1}^{\nu-1}\tfrac{\log^2(k/\nu)}{\nu-k}+\sum_{k=1}^{\nu-1}\tfrac{\log^3(k/\nu)}{\nu-k}\biggr)\\
=\log^3\nu+\tfrac{3\log^2\nu}{h_{\nu-1}}\sum_{k=1}^{\nu-1}\tfrac{\log(k/\nu)}{\nu-k}+O(1)\\
=\log^3\nu-3\zeta(2)\log\nu+O(1).
\end{multline*}
It follows that 
\begin{align*}
\tfrac{2\zeta(3)}{\zeta(2)^2}\log\nu+&\tfrac{1}{h_{\nu-1}}\sum_{k=1}^{\nu-1}\tfrac{\mathcal V_k}{\nu-k}\\
&=\mathcal V_{\nu}+\Bigl(\tfrac{2\zeta(3)}{\zeta(2)^2}-3\bA\zeta(2)\Bigr)\log\nu+O(1) =\mathcal V_{\nu}+O(1),
\end{align*}
if we select $\bA=\tfrac{2\zeta(3)}{3\zeta^3(2)}$. Combining this equation with \eqref{new3.5}, and using induction we
obtain that $|\bv_{\nu}-\mathcal V_{\nu}|\le C$ for some absolute constant $C$.
\end{proof}

\subsection{Bounding the edge-height of the random tree.}
\label{sec:Dtail}
As with the time-height in section \ref{sec:Rtail}, we use a tail bound on the edge-height of a random leaf to obtain a tail bound for the edge-height of the tree itself.

\begin{proposition}\label{0.1} Let $L_n$ denote the edge-height of the uniformly random leaf, and let $\mathcal L_n$ denote the largest edge-height of a leaf. 

\noindent
{\bf (1)\/} For $\eps>0$,
\[
\P\bigl(L_n\ge\tfrac{3}{\pi^2}(1+\eps)\log^2n\bigr)=O\bigl(n^{-\Theta(\eps)}\bigr).
\]
 {\bf (2)\/} 
 Let $\be=\min_{\a>1/\log 2}\bigl[\a+\tfrac{4\a^2\zeta(3)}{\a\log 2-1}\bigr]\approx 42.9$. For $\eps\in (0,1)$, 
\[
\P\Bigl(\mathcal L_n\ge (1+\eps)\be\log^2 n)\Bigr)\le \exp\bigl(-\Theta(\eps\log n)\bigr).
\]
\end{proposition}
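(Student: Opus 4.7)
The plan is to use the Laplace-transform method (as already exploited in Propositions \ref{1.5} and \ref{0.915}), now applied to $L_\nu$. Let $f_\nu(u) := \E[e^{uL_\nu}]$; from the splitting representation one has the recurrence \eqref{2.2}:
\[
f_\nu(u) = \tfrac{e^u}{h_{\nu-1}}\sum_{k=1}^{\nu-1}\tfrac{f_k(u)}{\nu-k},\qquad f_1(u)=1.
\]
The core technique is to construct an ansatz $\Psi_\nu(u)$ satisfying the \emph{reverse} inequality
\[
\Psi_\nu(u) \ge \tfrac{e^u}{h_{\nu-1}}\sum_{k=1}^{\nu-1}\tfrac{\Psi_k(u)}{\nu-k}
\]
for $\nu$ large, so that induction on $\nu$ yields $f_n(u) \le C\Psi_n(u)$ and Markov's inequality controls the tail.

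For part \textbf{(1)}, take $\Psi_\nu(u) := \exp(u\alpha\log^2\nu)$ with $\alpha = \tfrac{3}{\pi^2}(1+\eps')$ for some fixed $\eps' \in (0,\eps)$, and $u = c/\log n$ with a small constant $c > 0$. Writing $\sigma := u\alpha$ and $\Delta_k := \log^2 k - \log^2 \nu = \log(k/\nu)(\log(k/\nu) + 2\log\nu) \le 0$, the Taylor bound $e^{\sigma\Delta_k} \le 1 + \sigma\Delta_k + \tfrac{1}{2}\sigma^2\Delta_k^2$ (valid because $\sigma\Delta_k \le 0$) reduces the analysis to the sums $\sum_k \Delta_k/(\nu-k)$ and $\sum_k \Delta_k^2/(\nu-k)$. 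Integral approximations combined with the identities $\int_0^1 \log^r y/(1-y)\,dy = (-1)^r r!\,\zeta(r+1)$ (see \eqref{2.495}) yield $\sum_k \Delta_k/(\nu-k) \sim -2\zeta(2)\log\nu$ and $\sum_k \Delta_k^2/(\nu-k) = O(\log^2\nu)$, hence
\[
\tfrac{e^u}{h_{\nu-1}}\sum_{k=1}^{\nu-1}\tfrac{\Psi_k(u)}{\Psi_\nu(u)(\nu-k)} = e^u\bigl(1 - 2\sigma\zeta(2) + O(\sigma^2\log\nu + 1/\log\nu)\bigr).
\]
Since $2\alpha\zeta(2) = 1 + \eps'$, the right side is $1 - u\eps' + \text{l.o.t.} < 1$ for $c$ small, establishing the reverse inequality. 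Markov's then delivers
\[
\P\bigl(L_n \ge \tfrac{3}{\pi^2}(1+\eps)\log^2 n\bigr) \le C\exp\bigl(u(\alpha - \tfrac{3}{\pi^2}(1+\eps))\log^2 n\bigr) = O(n^{-\Theta(\eps)}).
\]

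For part \textbf{(2)}, we take the same ansatz $\Psi_\nu(u) = \exp(u B\log^2\nu)$ but with a much larger coefficient $B$: the union bound $\P(\mathcal{L}_n \ge t) \le n\,\P(L_n \ge t)$ forces $\P(L_n \ge \cdot) \le n^{-1-\Theta(\eps)}$, which in turn requires $u\log n = \Theta(1)$ rather than tending to $0$. In that regime the second-order error $4\sigma^2\zeta(3)\log^2\nu$ from the Taylor expansion is no longer negligible and must be absorbed. The key refinement is to split the sum $\sum_k \Psi_k/\Psi_\nu/(\nu-k)$ at $k = \nu/2$: for $k \ge \nu/2$ the second-order Taylor bound is sharp because $|\log(k/\nu)| \le \log 2$; for $k < \nu/2$, monotonicity of $t \mapsto t(t+2\log\nu)$ on $[-\log\nu,-\log 2]$ yields the sharp pointwise bound $\Psi_k/\Psi_\nu \le \nu^{-2\sigma\log 2}e^{\sigma\log^2 2}$, so the small-$k$ piece contributes at most $(\log 2)\,\nu^{-2\sigma\log 2}e^{\sigma\log^2 2}$. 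Under the natural parametrization in which the free parameter $\alpha$ governs the decay rate of the small-$k$ tail through $\alpha \log 2 \leftrightarrow 2\sigma\log 2\log\nu$, the constraint that this tail be sufficiently suppressed becomes $\alpha\log 2 > 1$, and balancing against the $4\sigma^2\zeta(3)\log^2\nu$ contribution produces the formula $G(\alpha) = \alpha + 4\alpha^2\zeta(3)/(\alpha\log 2 - 1)$ for the minimum admissible ansatz coefficient. Optimizing gives $\beta = \min_{\alpha>1/\log 2}G(\alpha)\approx 42.9$. Markov's then yields $\P(L_n \ge (1+\eps)\beta\log^2 n) \le C\exp(-\Theta(\eps\, u\log^2 n)) = Cn^{-1-\Theta(\eps)}$, and the union bound absorbs the factor of $n$ to give the desired $\exp(-\Theta(\eps\log n))$.

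The main obstacle is the exact derivation of $G(\alpha)$. The ``$\alpha$'' term on the outside comes from the first-order Taylor contribution $-2\sigma\zeta(2)\log\nu$; the ``$4\alpha^2\zeta(3)$'' numerator from the second-order contribution via $\int_0^1 \log^2 y/(1-y)\,dy = 2\zeta(3)$; and the critical denominator ``$\alpha\log 2 - 1$'' encodes the gap between the exponential decay rate $\nu^{-2\sigma\log 2}$ of the small-$k$ piece and the factor $n^{-1}$ that must be absorbed by the union bound. Extracting the precise coefficients requires tight bookkeeping of error terms in the two-piece decomposition of the sum, and this is the heart of the proof.
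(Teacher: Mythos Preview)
Your treatment of part \textbf{(1)} is correct and essentially matches the paper: ansatz $g_\nu(u)=\exp(u\alpha\log^2\nu)$ with $u=\Theta(\log^{-1}n)$, second-order Taylor on the non-positive exponent, the identity $\sum_k(\log^2 k-\log^2\nu)/(\nu-k)=-\tfrac{\pi^2}{3}\log\nu+O(1)$, and Markov.

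Part \textbf{(2)}, however, diverges from the paper in a way that leaves a genuine gap. You propose to split the sum at $k=\nu/2$ and to extract the constant $\log 2$ from the decay rate $\nu^{-2\sigma\log 2}$ of the small-$k$ block. That is \emph{not} where $\log 2$ comes from in the paper, and your accounting for the formula $G(\alpha)=\alpha+\tfrac{4\alpha^2\zeta(3)}{\alpha\log 2-1}$ does not hold together: if you Taylor-expand only on $k\ge\nu/2$, the first-order contribution is $2\sigma\log\nu\int_{1/2}^1\tfrac{\log x}{1-x}\,dx$, not $-2\sigma\zeta(2)\log\nu$, so the ``$\alpha$'' you claim to recover on the outside is not what your own decomposition produces. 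Nor does the small-$k$ tail naturally yield the denominator $\alpha\log 2-1$; that denominator is the gap between a first-order coefficient and the $e^u\approx 1+u$ factor, not between a tail decay rate and a union-bound $n^{-1}$.

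The paper's route is cleaner and is what makes the exact constant $\beta$ appear. It keeps the second-order Taylor bound over the \emph{whole} range $1\le k\le\nu-1$, and then bounds the first-order term via Jensen for the concave function $\log$:
\[
\tfrac{1}{h_{\nu-1}}\sum_{k=1}^{\nu-1}\tfrac{\log(k/\nu)}{\nu-k}\le \log\Bigl(\tfrac{1}{h_{\nu-1}}\sum_{k=1}^{\nu-1}\tfrac{k/\nu}{\nu-k}\Bigr)=\log\Bigl(1-\tfrac{\nu-1}{\nu h_{\nu-1}}\Bigr),
\]
whence $\tfrac{2\log\nu}{h_{\nu-1}}\sum_k\tfrac{\log(k/\nu)}{\nu-k}\le -\log 2$ uniformly in $\nu\ge 2$. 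This gives
\[
\tfrac{e^u}{g_\nu(u)h_{\nu-1}}\sum_k\tfrac{g_k(u)}{\nu-k}\le 1-u\bigl(\alpha\log 2-1\bigr)+4\alpha^2\zeta(3)u^2\log\nu+\text{l.o.t.},
\]
so the reverse inequality holds for $u\le \tfrac{\alpha\log 2-1}{(4+\delta)\alpha^2\zeta(3)\log n}$, and then Markov plus the union bound force $\alpha'>\alpha+\tfrac{4\alpha^2\zeta(3)}{\alpha\log 2-1}$. Minimizing over $\alpha>1/\log 2$ yields $\beta$. The $\log 2$ is thus a Jensen artifact, and the denominator $\alpha\log 2-1$ is the slack between the Jensen-bounded first-order term and the $+u$ from $e^u$; your splitting heuristic does not reproduce this structure.
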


\begin{proof} {\bf (1)\/} First of all, $f_{\nu}(u):=\E[e^{u L_{\nu}}]\le e^{u(\nu-1)}$ for $u\ge 0$. By \eqref{2.2}, we have:  
\[
f_{\nu}(u):=\E[e^{u L_{\nu}}]=\tfrac{e^{u}}{h_{\nu-1}}\sum_{k=1}^{\nu-1}\tfrac{f_k(u)}{\nu-k}, \quad \nu\in [2,n].
\]

Consider $u=\tfrac{v}{\log n}$, and introduce $g_k(u)=\exp(u\a\log^2k)$, $\a>0$ yet to be determined. 
Let us prove that there exists $v=v(\a)$ sufficiently small, and $\nu(\a)$ sufficiently large such that
\begin{equation}\label{4.051}
g_{\nu}(u)\ge\tfrac{e^u}{h_{\nu-1}}\sum_{k=1}^{\nu-1}\tfrac{g_k(u)}{\nu-k},\quad\forall \nu\in [\nu(a),n],
\end{equation}
We compute
\begin{multline}\label{4.0511}
\tfrac{1}{g_{\nu}(u) h_{\nu-1}}\sum_{k=1}^{\nu-1}\tfrac{g_k(u)}{\nu-k}
=\tfrac{1}{h_{\nu-1}}\sum_{k=1}^{\nu-1}\tfrac{\exp[u\a(\log^2k-\log^2\nu)]}{\nu-k}\\
=1+\tfrac{1}{h_{\nu-1}}\sum_{k=1}^{\nu-1}\tfrac{\exp[u\a(\log^2k-\log^2\nu)]-1}{\nu-k}\\
\le 1+\tfrac{u\a}{h_{\nu-1}}\sum_{k=1}^{\nu-1}\tfrac{\log^2k-\log^2\nu}{\nu-k}
+O\biggl(u^2\log \nu\sum_{k=1}^{\nu-1}\tfrac{\log^2(k/\nu)}{\nu-k}\!\biggr).\\
\end{multline}
The big-Oh term is $O(u^2\log \nu)$, and
\begin{multline*}
\sum_{k=1}^{\nu-1}\tfrac{\log^2k-\log^2\nu}{\nu-k}=\sum_{k=1}^{\nu-1}\tfrac{\log^2(k/\nu)}{\nu-k}+
2\log\nu\sum_{k=1}^{\nu-1}\tfrac{\log(k/\nu)}{\nu-k}=-\tfrac{\pi^2}{3}\log \nu +O(1).\\
\end{multline*}
Therefore
\begin{equation*}
\tfrac{1}{g_{\nu}(u) h_{\nu-1}}\sum_{k=1}^{\nu-1}\tfrac{g_k(u)}{\nu-k}\le 1-\tfrac{\a u\pi^2}{3}+O\bigl(u\log^{-1}\nu+
u^2\log \nu\bigr),
\end{equation*}
implying that 
\begin{equation}\label{4.0512}
\tfrac{e^u}{g_{\nu}(u) h_{\nu-1}}\sum_{k=1}^{\nu-1}\tfrac{g_k(u)}{\nu-k}
\le 1-u\bigl(\tfrac{\a\pi^2}{3}-1\bigr)+O\bigl(u\log^{-1}\nu+u^2\log \nu\bigr).
\end{equation}
Recalling that $u=\tfrac{v}{\log n}$, we obtain that for $\a>\tfrac{3}{\pi^2}$ there exists a sufficiently small $v(\a)>0$, and
a sufficiently large $\nu(\a)$, such that for $v\le v(\a)$ and $n\ge \nu\ge \nu(\a)$, we have
\begin{equation}\label{4.0513}
\tfrac{e^u}{g_{\nu}(u) h_{\nu-1}}\sum_{k=1}^{\nu-1}\tfrac{g_k(u)}{\nu-k}\le 1.
\end{equation}
Furthermore, for $u\le \tfrac{v(\a)}{\log n}$ and $\nu\le \nu(\a)$, 
\begin{equation}\label{4.0514}
\tfrac{f_{\nu}(u)}{g_{\nu}(u)}\le \tfrac{\exp(u\nu)}{\exp(u\a\log^2\nu)}\le \exp\bigl[\tfrac{v(\a)\nu(\a)}{\log n}\bigr].
\end{equation}
By induction on $\nu\in[\nu(a),n]$, it follows that for those $\nu$'s
\[
f_{\nu}(u)\le \exp\bigl[\tfrac{v(\a)\nu(\a)}{\log n}\bigr] g_{\nu}(u)\Longrightarrow f_n(u)\le\exp\bigl[\tfrac{v(\a)\nu(\a)}{\log n}\bigr] g_n(u),
\]
provided that $\a>\tfrac{3}{\pi^2}$, and $u\le \tfrac{v(\a)}{\log n}$. So, given $\eps>0$, we set $\a=\tfrac{3}{\pi^2}(1+\eps/2)$, $\a'=\tfrac{3}{\pi^2}(1+\eps)$, and bound
\begin{multline*}
\P\bigl(L_n\ge\tfrac{3}{\pi^2}(1+\eps)\log^2n\bigr)=O\bigl(\tfrac{\exp(u\a\log^2n)}{\exp(u\a'\log^2n)}\bigr)\Big|_{u=\tfrac{v(\a)}{\log n}}
=O\bigl(n^{-\Theta(\eps)}\bigr).\\
\end{multline*}
This is the assertion of {\bf (1)}.

{\bf (2)\/} We need a more explicit, but cruder, version of \eqref{4.0512}, again for $u=O(\log^{-1}n)$.  Instead of \eqref{4.0511}, we bound
\[
\tfrac{1}{g_{\nu}(u) h_{\nu-1}}\sum_{k=1}^{\nu-1}\tfrac{g_k(u)}{\nu-k}\le
 1+\tfrac{u\a}{h_{\nu-1}}\sum_{k=1}^{\nu-1}\tfrac{\log^2k-\log^2\nu}{\nu-k}+\tfrac{2\a^2u^2\log^2\nu}{h_{\nu-1}}\sum_{k=1}^{\nu-1}\tfrac{\log^2(k/\nu)}{\nu-k}.
 \]
Here 
\begin{multline*}
\tfrac{1}{h_{\nu-1}}\sum_{k=1}^{\nu-1}\tfrac{\log^2k-\log^2\nu}{\nu-k}=\tfrac{1}{h_{\nu-1}}\sum_{k=1}^{\nu-1}
\tfrac{\log^2(k/\nu)}{\nu-k}+\tfrac{2\log\nu}{h_{\nu-1}}\sum_{k=1}^{\nu-1}\tfrac{\log(k/\nu)}{\nu-k}\\
\le \tfrac{2\zeta(3)}{h_{\nu-1}}+2\log \nu\cdot \log\bigl(1-\tfrac{\nu-1}{\nu h_{\nu-1}}\bigr)\le 
\tfrac{\zeta(3)}{\log\nu}-2\tfrac{(\nu-1)\log\nu}{h_{\nu-1}\nu}
\le \tfrac{2\zeta(3)}{\log\nu}-\log 2,\\
\end{multline*}
since $\tfrac{\log^2 x}{1-x}$ is increasing, $\log x$ is concave, and $\log(1+z)\le z$, ($z>-1$). So, we replace \eqref{4.0512} with
\begin{equation}\label{4.0515}
\tfrac{e^u}{g_{\nu}(u) h_{\nu-1}}\sum_{k=1}^{\nu-1}\tfrac{g_k(u)}{\nu-k}
\le 1-u\Bigl(\a\bigl(\log 2-\tfrac{2\zeta(3)}{\log\nu}\bigr)-1\Bigr)+u^2(1+4\a^2\zeta(3)\log\nu).
\end{equation}
Given $\a>0$, the coefficient by $u$ can be made arbitrarily close to $\a\log 2-1$ for $\nu$ sufficiently large, thus positive if
$\a>\tfrac{1}{\log 2}$. Assuming the latter, for those large $\nu$'s, still below $n$,  the RHS expression in \eqref{4.0515}
is below $1$  if $0< u\le \tfrac{\a\log 2-1}{(4+\delta)\a^2\zeta(3)\log n}$, $(\delta>0)$, and $n\ge n(\delta)$. It follows that, as 
$n\to\infty$, we have $f_n(u)=O(g_n(u))$. Consequently, given $\a'>\a>\tfrac{1}{\log 2}$, 
\begin{equation}\label{4.0516}
\begin{aligned}
\P(\mathcal L_n\ge \a'\log^2n)&\le n \P(L_n\ge \a'\log^2 n)=O\bigl(n\tfrac{e^{\a u \log^2n}}{e^{\a'u\log^2n}}\bigr)\\
&=O\Bigl(\exp\bigl[\log n-u(\a'-\a)\log^2 n)\bigr]\Bigr)\to 0,
\end{aligned}
\end{equation}
if 
\[
u=\tfrac{\a\log 2-1}{(4+\delta)\a^2\zeta(3)\log n},\quad \a'>\a+\tfrac{(4+\delta)\a^2\zeta(3)}{\a\log 2-1}.
\]
Set 
\[
\be=\min_{\a>1/\log 2}\bigl[\a+\tfrac{4\a^2\zeta(3)}{\a\log 2-1}\bigr],
\]
and let $\hat\a$ stand for the point where the minimum is attained. Given $\eps>0$, there exists $\delta=\delta(\eps)=\Theta(\eps)$ such that
\[
\a':=(1+\eps)\be > \hat \a+\tfrac{(4+\delta)\hat\a^2\zeta(3)}{\hat\a\log 2-1},
\]
implying, by \eqref{4.0516} with $\a=\hat\a$, that $\P(\mathcal L_n\ge \a'\log^2n)=O\bigl(\exp(-\Theta(\eps)\log n)\bigr)$.
\end{proof}

\subsection{Asymptotic normality of $L_n$.}
\label{sec:Lnorm}
Here is the second part of Theorem \ref{thmG}.
\begin{proposition}\label{0.919new} In distribution, and with all of its moments, 
\[
\frac{L_n-(2\zeta(2))^{-1}\log^2 n}{\sqrt{\tfrac{2\zeta(3)}{3\zeta^3(2)}\log^3 n}}\Longrightarrow \mathrm{Normal}(0,1).
\]. 
\end{proposition}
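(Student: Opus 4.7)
The plan is to follow the template of Proposition 0.919 (the CLT for $D_n$), adapting the ansatz to reflect the different scaling for $L_n$, where the mean grows like $\log^2 n$ and the variance like $\log^3 n$. By Curtiss's theorem \cite{Cur}, it suffices to show that for $u = v/\log^{3/2} n$ with $v$ in any bounded range,
\[
 f_n(u)=(1+o(1))\exp\!\bigl[u\a_1\log^2 n+u^2\a_2\log^3 n\bigr],
\qquad \a_1=\tfrac{1}{2\zeta(2)},\ \a_2=\tfrac{\zeta(3)}{3\zeta^3(2)},
\]
the values read off from Propositions \ref{2new} and \ref{new3prop}. I would introduce the ansatz $\Psi_\nu(u):=\exp[u\a_1\log^2\nu+u^2\a_2\log^3\nu]$ (so $\Psi_1=f_1=1$) and prove by induction on $\nu\in[\nu_n,n]$, where $\nu_n=\lceil\exp(\log^\delta n)\rceil$ for some small $\delta>0$, that $f_\nu(u)/\Psi_\nu(u)\to 1$; the small-$\nu$ range $\nu\le\nu_n$ is handled crudely via $f_\nu(u)\le e^{u(\nu-1)}$ exactly as in the $D_n$ proof.

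The heart of the matter is to evaluate the ratio $R_\nu(u):=\frac{e^u}{\Psi_\nu(u)h_{\nu-1}}\sum_{k=1}^{\nu-1}\frac{\Psi_k(u)}{\nu-k}$ sharply. Writing $\lambda=\log(k/\nu)$, I would expand
\[
\log\bigl(\Psi_k/\Psi_\nu\bigr) = \underbrace{(2u\a_1\log\nu+3u^2\a_2\log^2\nu)}_{a_1}\lambda+\underbrace{(u\a_1+3u^2\a_2\log\nu)}_{a_2}\lambda^2+\underbrace{u^2\a_2}_{a_3}\lambda^3.
\]
For $k\ge\nu/\nu_n$ the exponent is $o(1)$ (since $|a_1\lambda|\le O(\log^{\delta-1/2}n)$), so Taylor expansion of $e^E-1$ is valid, while the tail $k<\nu/\nu_n$ contributes only $O(\nu_n^{-1}|u|\log\nu_n)$ by a crude bound as in \eqref{1.235}. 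Approximating the sum by $\int_0^1\cdot\,dx/(1-x)$ via Euler-Maclaurin, and applying the identities $\int_0^1\log^r x/(1-x)\,dx=(-1)^r r!\zeta(r+1)$ from \eqref{2.495}, I would collect terms of the same order in $u,\log\nu$. The $e^E\sim x^{a_1}e^{a_2\lambda^2+a_3\lambda^3}$ structure means the dominant sum comes from $\int(x^{a_1}-1)/(1-x)\,dx=\sum_{m\ge1}(-1)^m\zeta(m+1)a_1^m$, together with correction terms from $a_2\lambda^2$ and $a_3\lambda^3$.

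After multiplying by $e^u=1+u+u^2/2+O(u^3)$ and grouping, I expect the result to take the shape
\[
 R_\nu(u)=1+\tfrac{u}{h_{\nu-1}}\bigl(1-2\a_1\zeta(2)\log\nu\bigr)
 +\tfrac{u^2}{h_{\nu-1}}\Bigl(2\a_1\zeta(3)-3\a_2\zeta(2)\log^2\nu+4\a_1^2\zeta(3)\log^2\nu+\cdots\Bigr)+\text{lower order},
\]
where the choice $\a_1=1/(2\zeta(2))$ kills the $u\log\nu/h_{\nu-1}$ term and $\a_2=\zeta(3)/(3\zeta^3(2))$ kills the dominant $u^2\log^2\nu/h_{\nu-1}$ contribution, leaving $R_\nu(u)=1+O(u^3\log^{3/2}n\cdot h_{\nu-1}^{-1})+\cdots$. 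I would then perturb $\a_1\mapsto\a_1\pm u^b$ with $b\in(1,2)$ to force strict inequalities $R_\nu(u)\lessgtr 1$ uniformly on $\nu\in[\nu_n,n]$; by induction via \eqref{2.2} this yields $\limsup_n\max_\nu f_\nu/\Psi_\nu\le 1$ and the reverse, giving the CLT. Convergence of all moments follows because the Laplace transform analysis gives uniform control on a real neighbourhood of $0$ after rescaling.

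The main obstacle will be the increased bookkeeping: unlike the $D_n$ case, where $\log(\Psi_k/\Psi_\nu)$ was just $(u\a_1+u^2\a_2)\log(k/\nu)$, here $E$ has three terms with different growth rates in $\log\nu$, so the sums $\int E^m/m!(1-x)^{-1}dx$ mix powers of $\a_1,\a_2$ and of $\log\nu$ in a way that requires isolating the dominant monomials in $u$ and $\log\nu$ at each order. One must verify that the coefficient matching really does leave behind only terms of order $o(u/h_{\nu-1})$, in particular that terms like $-6a_1 a_2\zeta(4)$ from the $E^2$ integral and $-a_1^3\zeta(4)$ from the $E^3$ integral are genuinely subdominant to the perturbation $u^{b+1}/h_{\nu-1}$ introduced by shifting $\a_1$. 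Once this algebraic verification is completed, the inductive sandwich and the small-$\nu$ patch go through essentially verbatim from the $D_n$ proof.
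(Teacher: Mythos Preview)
Your overall template is the right one, but there is a genuine obstacle you have not seen, and it forces a change of framework.

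\textbf{The sign of $u$.} In the $D_n$ proof the exponent $\log(\Psi_k/\Psi_\nu)=(u\a_1+u^2\a_2)\log(k/\nu)$ is $O(|u|\log\nu)=o(1)$ uniformly in $k$, so the Taylor expansion of $e^E$ is unproblematic for either sign of $u$. Here the situation is different: your $a_1\lambda$ term is of order $|u|\log\nu\cdot|\log(k/\nu)|$, and for $k$ of order $1$ and $\nu$ close to $n$ this is $\Theta(\sqrt{\log n})$, not $o(1)$. The paper handles this by observing that $G_1,G_2\le 0$, so for $u>0$ the whole exponent $E\le 0$, and the inequality $|e^E-1-E-E^2/2|\le |E|^3/6$ is valid for \emph{all} $E\le 0$ regardless of size. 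That is what makes the uniform third-order expansion legitimate. For $u<0$ the exponent is positive and unbounded, and your proposed fix---split off the tail $k<\nu/\nu_n$---does not work: to make the Taylor expansion valid on the bulk you need $\log\nu_n=o(\sqrt{\log n})$, but to kill the tail (where $g_k/g_\nu$ can be as large as $\exp(c\sqrt{\log n})$) you need $\log\nu_n\gg\sqrt{\log n}$. These are incompatible. The paper therefore proves the Laplace-transform asymptotic only for $u>0$ and invokes an \emph{extension} of Curtiss's theorem (Mukherjea--Rao--Suen; Yakymiv) which says convergence of moment generating functions on a one-sided interval $(0,\sigma]$ already implies convergence in distribution and of moments. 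Your appeal to the classical Curtiss theorem over a two-sided interval cannot be carried through.

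\textbf{Parameter choices.} The perturbation $\a_1\mapsto\a_1\pm u^b$ with $b\in(1,2)$ is copied from the $D_n$ case but is wrong here. After cancelling the $u^2$ coefficient the surviving linear term is $u\bigl(-u^b+O(\log^{-1}\nu)\bigr)+O(u^3\log^2\nu)$, so you need $u^b\gg\log^{-1}\nu$ uniformly for $\nu\ge\nu_n$, which forces $b<2\delta/3$, and you need $u^{1+b}\log^2\nu\to 0$ so that the perturbed $\Psi_\nu$ matches the unperturbed one, which forces $b>1/3$. Together this requires $\delta\in(1/2,3/4)$ and $b\in(1/3,2\delta/3)$, not ``small $\delta$'' and $b\in(1,2)$.

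\textbf{The small-$\nu$ patch.} The bound $f_\nu(u)\le e^{u(\nu-1)}$ is one-sided and only useful for $u>0$; it is not how the $D_n$ CLT proof handles small $\nu$ either. You need a two-sided sandwich $f_\nu(u)=\exp\bigl(O(u\log^2\nu)\bigr)$ for $\nu\le\nu_n$, obtained by comparing $f_\nu$ recursively against $g^*_\nu(u)=\exp(u\alpha\log^2\nu)$ for suitably small and large $\alpha$.
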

\begin{proof} 
Analogously to
the proof of Proposition \ref{0.919}, it would seem natural to show that for $|u|=\Theta(\log^{-3/2}n)$ and properly chosen $\a_1>0,\,\a_2>0$, the Laplace transform $f_{\nu}(u)=\E[e^{uL_{\nu}}]$ satisfies
\begin{equation}\label{1new}
f_{\nu}(u)=(1+o(1))g_{\nu}(u),\quad g_{\nu}(u):= \exp\bigl(u\a_1\log^2 \nu+u^2\a_2\log^3 \nu\bigr),
\end{equation}
uniformly for $\nu\le n$. But we could only prove \eqref{1new} for $0<u\le v \log^{-3/2}n$ and a fixed $v>0$. Fortunately, that's all we need, thanks to a relatively recent extension of the Curtis
Lemma: it suffices to prove convergence of the sequence of Laplace transforms for the parameter $v$ confined to a fixed interval $(0,\sigma]$, see 
\cite{MRS} or  \cite{Yak}.

Recall 
\begin{equation}\label{1.349new}
f_{\nu}(u)=\tfrac{e^u}{h_{\nu-1}}\sum_{k=1}^{\nu-1}\tfrac{f_k(u)}{\nu-k},\quad \nu\ge 2.
\end{equation}
Pick $\delta\in (0,3/4)$ and set $\nu_n=\lceil\exp(\log^{\delta}n)\rceil$.
For a constant $\a$, introduce $g^*_{\nu}(u):=\exp(u\a \log^2\nu)$. For $u>0$, we have
\begin{multline}\label{twostar}
\tfrac{e^u}{h_{\nu-1}g^*_{\nu}(u)}\sum_{k=1}^{\nu-1}\tfrac{g^*_k(u)}{\nu-k}
=\tfrac{e^u}{h_{\nu-1}}\sum_{k=1}^{\nu-1}\tfrac{\exp\bigl(u\a\log (k\nu)\log(k/\nu)\bigr)}{\nu-k}\\
=\tfrac{e^u}{h_{\nu-1}}\sum_{k=1}^{\nu-1} \tfrac{1}{\nu-k} \Bigl[1+u\a\log(k\nu)\log(k/\nu)+O\bigl(u^2\a^2\log^2\nu\log^2(k/\nu)\bigr)\Bigr]\\
=e^u\Bigl[1+\tfrac{u\a\Theta(\log\nu)}{h_{\nu-1}}\sum_{k=1}^{\nu-1}\tfrac{\log(k/\nu)}{\nu-k}+O\bigl(u^2\a^2\log^2\nu\sum_{k=1}^{\nu-1}\tfrac{\log^2(k/\nu)}{\nu-k}\bigr)\Bigr]\\
=e^u\bigl[1-u\Theta(\a)+O(u^2\a^2\log^2\nu)\bigr]=1+u(1-\Theta(\a))+O(u^2(1+\a^2\log\nu))\\
\left\{\begin{aligned}
&>1,&&\text{if }\nu\le\nu_n,\,\a>0\text{ and small},\\
&<1,&&\text{if }\nu\le\nu_n,\,\a>0\text{ and large}.\end{aligned}\right.
\end{multline}
(For the second line we used $e^z=1+z+O(z^2/2)$, uniformly for $z<0$. For the bottom line we used $\delta<\tfrac{3}{4}$.) 
Combining this with \eqref{1.349new}, we conclude that 
$f_{\nu}(u)=\exp\bigl(O(u\log^2 \nu)\bigr)$, uniformly for $\nu\le \nu_n$.
So, for bounded $\a_1,\,\a_2$,
\begin{equation}\label{1.1new}
\lim_{n\to\infty}\max_{\nu\le\nu_n}\big|\tfrac{f_{\nu}(u)}{g_{\nu}(u)}-1\bigr|=0.
\end{equation}
Thus, we need to prove existence of $\a_1,\a_2$ such that the analogous relation holds uniformly for all $\nu\in [\nu_n, n]$.
Predictably, we select $\a_1$ and $\a_2$, requiring that $g_{\nu}(u)$ is the asymptotically best fit for the recurrence 
\eqref{1.349new} for all $\nu\in [\nu_n,n]$. To begin,
\begin{equation}\label{1.11new}
\begin{aligned}
g_k(u)&=g_{\nu}(u)\exp\bigl[u\a_1 G_1(k/\nu,\nu)+u^2\a_2G_2(k/\nu,\nu)\bigr],\\
G_1(k/\nu,\nu)&:=\log\bigl(\tfrac{k}{\nu}\bigr)\log(k\nu)\le 0,\\
G_2(k/\nu,\nu)&:=\log\bigl(\tfrac{k}{\nu}\bigr)\bigl[3\log k\log\nu+\log^2\bigl(\tfrac{k}{\nu}\bigr)\bigr]\le 0.
\end{aligned}
\end{equation}
And, since $G_j(k/\nu,\nu)$ are {\it non-positive\/}, the Taylor  
expansion of the exponential function holds for $u>0$, even though $|uG_1(1/n, n)|=O(\sqrt{\log n})$. (Notice that 
$u^2 |G_2(1/\nu,\nu)|=O(1)$.)
So, proceeding analogously to \eqref{1.23},
\begin{multline}\label{1.23new}
\tfrac{e^u}{g_{\nu}(u)h_{\nu-1}}\sum_{k=1}^{\nu-1}\tfrac{g_k(u)}{\nu-k}=\tfrac{e^u}{h_{\nu-1}}\sum_{k=1}^{\nu-1}
\tfrac{\exp\bigl[u\a_1 G_1(k/\nu,\nu)+u^2\a_2G_2(k/\nu,\nu)\bigr]}{\nu-k}\\
=e^u\cdot\Bigl(1+\tfrac{1}{h_{\nu-1}}
\sum_{k=1}^{\nu-1}\tfrac{\exp\bigl[u\a_1 G_1(k/\nu,\nu)+u^2\a_2G_2(k/\nu,\nu)\bigr]-1}{\nu-k}\Bigr)\\
=e^u\cdot\biggl(1+\tfrac{1}{h_{\nu-1}}\int_0^1\tfrac{\exp\bigl[u\a_1G_1(x,\nu)+u^2\a_2 G_2(x,\nu)\bigr]-1}{1-x}\,dx+O\bigl(\tfrac{\exp(-u\log^2\nu\bigr)}{\nu\log\nu}\biggr),
\end{multline}
where {\it uniformly\/} for $x\in (0,1]$:	
\begin{multline*}
\tfrac{\exp\bigl[u\a_1G_1(x,\nu)+u^2\a_2 G_2(x,\nu)\bigr]-1}{1-x}
=\tfrac{u\a_1G_1(x,\nu)+u^2\a_2 G_2(x,\nu)}{1-x}\\
+\tfrac{\bigl(u\a_1G_1(x,\nu)+u^2\a_2 G_2(x,\nu)\bigr)^2}{2(1-x)}+O\bigl(\tfrac{u^3\log^3(1/x)\log^3\nu}{1-x}\bigr).
\end{multline*}
Using \eqref{1.11new}, and \eqref{2.495}, we have then
\begin{multline*}
\int_0^1\tfrac{\exp\bigl[u\a_1G_1(x,\nu)+u^2\a_2 G_2(x,\nu)\bigr]-1}{1-x}\,dx\\
=\a_1 u\bigl(-2\zeta(2)\log\nu + 2\zeta(3)\bigr)+u^2\biggl(\tfrac{\a_1^2}{2}\bigl(8\zeta(3)\log^2\nu-24\zeta(4)\log\nu\bigr)\\
+\a_2\bigl(-3\zeta(2)\log^2\nu+6\zeta(3)\log\nu-6\zeta(4)\bigr)\biggr)+O\bigl(u^3\log^3\nu\bigr).
\end{multline*}
Upon expansion $e^u=1+u+u^2/2+O(u^3)$, the bottom RHS in \eqref{1.23new} then becomes
\begin{multline}\label{1.24new}
1+u\Bigl(1+\a_1\tfrac{-2\zeta(2)\log\nu+2\zeta(3)}{h_{\nu-1}}\Bigr)+u^2\biggl(\tfrac{\tfrac{\a_1^2}{2}\bigl(8\zeta(3)\log^2\nu-24\zeta(4)\log\nu\bigr)}{h_{\nu-1}}\\
+\tfrac{a_2\bigl(-3\zeta(2)\log^2\nu+6\zeta(3)\log\nu-6\zeta(4)\bigr)}{h_{\nu-1}}+\a_1\tfrac{-2\zeta(2)\log\nu+2\zeta(3)}
{h_{\nu-1}}\biggr)+O\bigl(u^3\log^2\nu\bigr),\\
=1+u\Bigl(1+\a_1\tfrac{-2\zeta(2)\log\nu+2\zeta(3)}{h_{\nu-1}}\Bigr)+O\bigl(u^3\log^2\nu\bigr),
\end{multline}
if, leaving $\a_1=\a_1(\nu)>0$ to be determined shortly, we select $\a_2=\a_2(\nu) $ to make the coefficient by $u^2$
equal to zero. Looking closer at the coefficient by $u^2$, we see that this 
\[
\a_2=\tfrac{4\a_1^2\zeta(3)}{3\zeta(2)}+O\bigl(\log^{-1}\nu\bigr).
\]
The rest is short. Pick $\a_1=(2\zeta(2))^{-1} (1+u^b)$, $b<2\delta/3$. Since $u=\Theta(\log^{-3/2}n)$, the bottom expression in 
\eqref{1.24new} becomes
\begin{multline*}
1+u\bigl(-u^b+O(\log^{-1}\nu)\bigr)+O(u^3\log^2\nu)\\
=1-u^{b+1}\bigl(1+O(u^{-b}\log^{-1}\nu)+O(u^{2-b}\log^2n)\bigr)\\
=1-u^{b+1}\bigl[1+O((\log n)^{-\delta+3b/2})+O((\log n)^{-1+3b/2})\bigr]<1,
\end{multline*}
since $b<2\delta/3$. So, it follows from \eqref{1.23new} that
\[
\tfrac{e^u}{h_{\nu-1}}\sum_{k=1}^{\nu-1}\tfrac{g_k(u)}{\nu-k}<g_{\nu}(u),\quad \nu\in [\nu_n, n].
\]
Combining this recursive inequality with \eqref{1.1new}, we conclude that 
\[
\limsup_{n\to\infty}\max_{\nu\in [\nu_n,n]}\tfrac{f_{\nu}(u)}{g_{\nu}(u)}\le 1.
\]
Now,
\begin{multline*}
g_{\nu}(u)=\exp\bigl(u\a_1\log^2\nu+u^2\a_2\log^3\nu\bigr)\\
=\exp\Bigl[u\bigl((2\zeta(2))^{-1}(1+u^b)\bigr)\log^2\nu +u^2\bigl(\tfrac{\zeta(3)}{3\zeta^3(2)}+o(1))\log^3\nu\Bigr]\\
=\exp\Bigl[u(2\zeta(2))^{-1}\log^2\nu +u^2\tfrac{\zeta(3)}{3\zeta^3(2)}\log^3\nu+o(1) +O\bigl(u^{b+1}\log^2\nu\bigr)\Bigr]\\
=(1+o(1))\exp\Bigl[u(2\zeta(2))^{-1}\log^2\nu +u^2\tfrac{\zeta(3)}{3\zeta^3(2)}\log^3\nu\Bigr],
\end{multline*}
if we select $b>1/3$. Since $b<2\delta/3$, a desired $b$ exists provided that $\delta>1/2$, the constraint compatible with the initial restriction $\delta<3/4$.  We conclude that for $\delta\in (1/2,3/4)$
\[
\limsup_{n\to\infty}\max_{\nu\in [\nu_n,n]}f_{\nu}(u)\exp\Bigl[-u(2\zeta(2))^{-1}\log^2\nu -u^2\tfrac{\zeta(3)}{3\zeta^3(2)}\log^3\nu\Bigr]\le 1.
\]
Likewise, picking $\a_1=(2\zeta(2))^{-1} (1-u^b)$, $b<2\delta/3$, we obtain
\[
\liminf_{n\to\infty}\min_{\nu\in [\nu_n,n]}f_{\nu}(u)\exp\Bigl[-u(2\zeta(2))^{-1}\log^2\nu -u^2\tfrac{\zeta(3)}{3\zeta^3(2)}\log^3\nu\Bigr]\ge 1.
\]
This verifies \eqref{1new}, as required.
\end{proof}

\subsection{How soon do the species part their ways?} 
\label{sec:pruned}
Recall from section \ref{sec:outline} the notion of  {\em pruned spanning tree} on $t$ random leaves within the tree model on $n$ leaves.
Write $S_{n,t}$ for the edge height of the first branchpoint in the pruned tree.
In other words, the number of edges from the root to the vertex after which the $t$ sampled leaves are first split into some $(k,t-k)$ leaf subsets.
Conditioned on the size $k$ of the left subtree at the root of the tree with $n$ leaves, the probability that the $t$ sampled leaves are all in this left subtree is $\tfrac{(k)_t}{(n)_t}$. Therefore, since $q(n,k)=\tfrac{n}{2h_{n-1}k(n-k)}$, we obtain the recursion
\begin{equation}\label{4.052-}
\E[S_{n,t}]=1+\tfrac{1}{h_{n-1}}\sum_{k=1}^{n-1}\tfrac{(n/k)\E[S_{k,t}]}{n-k}\tfrac{(k)_{t}}{(n)_{t}},\quad n\ge t\ge 2,
\end{equation}
$(\E[S_{k,1}]=0)$, or, introducing $\Phi_{n,t}=(n-1)_{t-1}\E[S_{n,t}]$,
\begin{equation}\label{4.052}
\Phi_{n,t}=(n-1)_{t-1}+\tfrac{1}{h_{n-1}}\sum_{k=1}^{n-1}\tfrac{\Phi_{k,t}}{n-k}.
\end{equation}

\begin{proposition}\label{thm11}
\[
\E[S_{n,t}]=\tfrac{\log n}{h_{t-1}}+O(1) \mbox{ as } n \to \infty.
\]
\end{proposition}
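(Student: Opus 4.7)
My plan is to show $\E[S_{n,t}] = \tfrac{\log n}{h_{t-1}} + O(1)$ by the same recursive-inequality methodology used for $\E[D_n]$ in Proposition \ref{1.5}. The natural move is to introduce the shift $\E[S_{n,t}] = \tfrac{\log n}{h_{t-1}} + \xi_n$, substitute into the rewriting
\begin{equation*}
\E[S_{n,t}] = 1 + \tfrac{1}{h_{n-1}(n-1)_{t-1}}\sum_{k=1}^{n-1}\tfrac{(k-1)_{t-1}\E[S_{k,t}]}{n-k}
\end{equation*}
of \eqref{4.052-} (obtained from $\tfrac{n}{k}\cdot\tfrac{(k)_t}{(n)_t} = \tfrac{(k-1)_{t-1}}{(n-1)_{t-1}}$), and try to show $\xi_n = O(1)$ as $n\to\infty$ for fixed $t$.

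The main computational step is the asymptotic evaluation of the two weighted sums $\sum_{k=1}^{n-1}\tfrac{(k-1)_{t-1}}{n-k}$ and $\sum_{k=1}^{n-1}\tfrac{(k-1)_{t-1}\log k}{n-k}$. Expanding $k^{t-1} = \sum_{i=0}^{t-1}\binom{t-1}{i}n^{t-1-i}(-(n-k))^i$ and invoking the combinatorial identity
\[\sum_{i=1}^{t-1}\binom{t-1}{i}\tfrac{(-1)^{i-1}}{i}=h_{t-1},\]
together with Lemma \ref{L:nk} for the log piece, I would obtain
\begin{align*}
\sum_{k=1}^{n-1}\tfrac{(k-1)_{t-1}}{n-k} &= n^{t-1}(h_{n-1}-h_{t-1})+O(n^{t-2}\log n),\\
\sum_{k=1}^{n-1}\tfrac{(k-1)_{t-1}\log k}{n-k} &= n^{t-1}(\log n)(h_{n-1}-h_{t-1})+O(n^{t-1}).
\end{align*}
Dividing by $h_{n-1}(n-1)_{t-1}\sim n^{t-1}h_{n-1}$ and using $\log n/h_{n-1}=1+O(1/\log n)$, these normalize to
\begin{align*}
\tfrac{1}{h_{n-1}(n-1)_{t-1}}\sum_{k=1}^{n-1}\tfrac{(k-1)_{t-1}}{n-k} &= 1-\tfrac{h_{t-1}}{h_{n-1}}+O(\tfrac{1}{n}),\\
\tfrac{1}{h_{n-1}(n-1)_{t-1}}\sum_{k=1}^{n-1}\tfrac{(k-1)_{t-1}\log k}{n-k} &= \log n-h_{t-1}+O(\tfrac{1}{\log n}).
\end{align*}
Substituting these back, the leading $\log n$ terms cancel exactly because of the choice of coefficient $1/h_{t-1}$, and the equation for $\xi_n$ collapses to
\begin{equation*}
\xi_n = \bar\xi_n + O\bigl(\tfrac{1}{\log n}\bigr),\qquad \bar\xi_n := \tfrac{1}{h_{n-1}(n-1)_{t-1}}\sum_{k=1}^{n-1}\tfrac{(k-1)_{t-1}\xi_k}{n-k}.
\end{equation*}

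The final step is a simple contraction induction. The operator $\{\xi_k\}\mapsto\bar\xi_n$ is a weighted average with non-negative weights summing to $1-h_{t-1}/h_{n-1}$, i.e.\ strictly sub-probability by a gap of order $1/\log n$. Hence if $|\xi_k|\le C$ for all $k<n$, then $|\xi_n|\le C(1-h_{t-1}/h_{n-1})+c/\log n \le C$ whenever $C \ge c h_{n-1}/(h_{t-1}\log n) = c/h_{t-1}+o(1)$, so induction on $n$ (after handling finitely many small $n$ as a base case) delivers the uniform bound $\xi_n=O(1)$. The main obstacle is the precise extraction of the $-h_{t-1}$ correction in both sums: to leading order in $\log n$ every coefficient in front of $\log n$ in the ansatz looks admissible, and pinning down the value $1/h_{t-1}$ requires capturing the $O(1)$ next-order correction via the identity displayed above. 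Once this identification is in hand the contraction argument is essentially the same comparison-to-an-auxiliary-sequence technique used at the end of step (iii) of the proof of Proposition \ref{1.5}.
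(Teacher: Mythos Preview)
Your proposal is correct and follows essentially the same route as the paper. The paper multiplies through by $(n-1)_{t-1}$ and works with $\Phi_{\nu,t}=(\nu-1)_{t-1}\E[S_{\nu,t}]$, subtracting the ansatz $\alpha\,\nu^{t-1}\log\nu$ and identifying $\alpha=1/h_{t-1}$ via the integral $\int_0^1\tfrac{x^{t-1}-1}{1-x}\,dx=-h_{t-1}$; you stay at the level of $\E[S_{n,t}]$ and arrive at the same $h_{t-1}$ through the equivalent discrete identity $\sum_{i=1}^{t-1}\binom{t-1}{i}\tfrac{(-1)^{i-1}}{i}=h_{t-1}$. Both finish with an induction on a recursive inequality; your sub-probability/contraction formulation is a mild repackaging of the paper's explicit trial-function bound $|U_{\nu,t}|\le B\nu^{t-1}$. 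One small point to tidy up: your binomial expansion is stated for $k^{t-1}$, while the weight is $(k-1)_{t-1}$; make explicit that $(k-1)_{t-1}=k^{t-1}+O(k^{t-2})$ and that the lower-degree terms only contribute the $O(n^{t-2}\log n)$ error you already allow.
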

\begin{proof} 
Given $\a>0$, define 
\[
U_{\nu,t}=\Phi_{\nu,t}-\a\nu^{t-1}\log  \nu.
\]
Then, by \eqref{4.052}, we have
\begin{multline}\label{4.053}
U_{\nu,t}\!=\!(\nu-1)_{t-1}+\tfrac{1}{h_{\nu-1}}\sum_{k=1}^{\nu-1}\tfrac{U_{k,t}}{\nu-k}
+\a\biggl(\tfrac{1}{h_{\nu-1}}\sum_{k=1}^{\nu-1}\tfrac{k^{t-1}\log k}{\nu-k} -\nu^{t-1}\log \nu\biggr),\\
\end{multline}
and the coefficient by $\a$ equals
\begin{multline*}
\tfrac{\nu^{t-1}}{h_{\nu-1}}\sum_{k=1}^{\nu-1}\tfrac{(k/\nu)^{t-1}[\log\nu+\log (k/\nu)]}{\nu-k}-\nu^{t-1}\log\nu\\
=\tfrac{\nu^{t-1}}{h_{\nu-1}}\biggl(\log\nu\sum_{k=1}^{\nu-1}\tfrac{(k/\nu)^{t-1}-1}{\nu-k}+\log\nu\sum_{k=1}^{\nu-1}\tfrac{1}{\nu-k}
+\sum_{k=1}^{\nu-1}\tfrac{(k/\nu)^{t-1}\log(k/\nu)}{\nu-k}\biggr)-\nu^{t-1}\log\nu\\
=\tfrac{\nu^{t-1}}{h_{\nu-1}}\biggl(\log\nu\int_0^1\tfrac{x^{t-1}-1}{1-x}\,dx+h_{\nu-1}\log\nu+O(1)\biggr)-\nu^{t-1}\log\nu\\
=-\tfrac{\nu^{t-1}\log\nu}{h_{\nu-1}}h_{t-1}+O\bigl(\nu^{t-1}\log^{-1}\nu\bigr).
\end{multline*}
So, the equation \eqref{4.053} becomes
\begin{multline}\label{4.0531}
U_{\nu,t}\!=\!(\nu-1)_{t-1}+\a\Bigl(\!-\tfrac{\nu^{t-1}\log\nu}{h_{\nu-1}}h_{t-1}+O\bigl(\nu^{t-1}\log^{-1}\nu\bigr)\Bigr)+\tfrac{1}{h_{\nu-1}}\sum_{k=1}^{\nu-1}\tfrac{U_{k,t}}{\nu-k}\\
=O\bigl(\nu^{t-1}\log^{-1}\nu\bigr)+\tfrac{1}{h_{\nu-1}}\sum_{k=1}^{\nu-1}\tfrac{U_{k,t}}{\nu-k}
\end{multline}
if we choose $\a=\tfrac{1}{h_{t-1}}$. Consequently, for some constant $\be$,
\[
|U_{\nu,t}|\le \be\nu^{t-1}\log^{-1}\nu+\tfrac{1}{h_{\nu-1}}\sum_{k=1}^{\nu-1}\tfrac{|U_{k,t}|}{\nu-k}.
\]
For a constant $B$, to be chosen shortly, we have
\begin{multline*}
\be\nu^{t-1}\log^{-1}\nu+\tfrac{1}{h_{\nu-1}}\sum_{k=1}^{\nu-1}\tfrac{B k^{t-1}}{\nu-k}
=\be\nu^{t-1}\log^{-1}\nu+\tfrac{B\nu^{t-1}}{h_{\nu-1}}\sum_{k=1}^{\nu-1}\tfrac{(k/\nu)^{t-1}}{\nu-k}\\
=\be\nu^{t-1}\log^{-1}\nu+\tfrac{B\nu^{t-1}}{h_{\nu-1}}\biggl(h_{\nu-1}+\int_0^1\tfrac{x^{t-1}-1}{1-x}\,dx+O(\nu^{-1})\biggr)\\
=\be\nu^{t-1}\log^{-1}\nu+\tfrac{B\nu^{t-1}}{h_{\nu-1}}\bigl(h_{\nu-1}-h_{t-1}+O(\nu^{-1})\bigr)
<B\nu^{t-1},\\
\end{multline*}
provided that
\[
\be\log^{-1}\nu-B\Bigl(\tfrac{h_{t-1}}{h_{\nu-1}}+O(\nu^{-1})\Bigr)<0.
\]
And this inequality holds for all $\nu\ge 2$, if we choose $B$ sufficiently large. It follows, by induction on $\nu$, that $|U_{\nu,t}|\le B\nu^{t-1}$. Consequently
\[
\Phi_{\nu,t}=\a\nu^{t-1}\log \nu+O(\nu^{t-1}),
\]
so that
\[
\E[S_{\nu,t}]=\tfrac{\Phi_{\nu,t}}{(\nu-1)_{t-1}}=\a\log\nu+O(1),\quad \a=\tfrac{1}{h_{t-1}}.
\]
\end{proof}

Within the same notion of  {\em pruned spanning tree} on $t$ random leaves within the tree model on $n$ leaves, 
a more complicated statistic is the edge-length of the pruned tree, which we denote as $S^*_{n,t}$.
To derive the counterpart of \eqref{4.052-}, notice that the total number of ways to partition the set
$[n]\setminus [t]$ into two trees, the left one of cardinality $k$, with $t_1\le t$ vertices from $[t]$ and the right one of cardinality $n-k$, with $t_2=t-t_1$ remaining vertices from $[t]$, equals $\binom{n-t }{k-t_1}$. 
Defining $S^*_{n,0}=0$, $S^*_{n,1}=0$,  $\forall\,n\ge 0$, we have the recursion: for $n\ge t\ge 2$, 
\begin{multline*}
\E[S^*_{n,t}]=1+\sum_{k=1}^{n-1}\tfrac{n}{2h_{n-1} k(n-k)}\cdot\binom{n}{k}^{-1}\\
\times\sum_{t_1\le t}\binom{n-t}{k-t_1}\Bigl(\E[S^*_{k,t_1}]+\E[S^*_{n-k,t_2}]\Bigr)\\
=1+\sum_{k=1}^{n-1}\tfrac{n}{2h_{n-1} k(n-k)}\sum_{t_1\le t}\tfrac{(k)_{t_1} (n-k)_{t_2}}{(n)_t}\Bigl(\E[S^*_{k,t_1}]+\E[S^*_{n-k,t_2}]\Bigr)\\
=1+\tfrac{1}{h_{n-1}}\sum_{k=2}^{n-1}\sum_{t_1=2}^t\tfrac{(k-1)_{t_1-1}(n-k)_{t_2}}{(n-1)_{t-1}(n-k)}\,\E[S^*_{k,t_1}].
\end{multline*}
Therefore, with $\Psi_{n,t}:=(n-1)_{t-1}\E[S^*_{n,t}]$, so that $\Psi_{n,0}=\Psi_{n,1}=0$, $\Psi_{n,t}=0$ for
$n<t$, we obtain
\begin{equation}\label{4.054}
\Psi_{n,t}=(n-1)_{t-1}+\tfrac{1}{h_{n-1}} \sum_{t_1=2}^{t}\sum_{k=2}^{n-1}\tfrac{(n-k)_{t_2}}{n-k}\Psi_{k,t_1}, \quad n\ge t\ge 2.
\end{equation}
This equation is similar to \eqref{4.052}. Because of the new factor $(n-k)_{t_2}$, we will use 
\begin{equation}\label{Stir}
(a)_b=\sum_{j=1}^b s(b,j)a^j,
\end{equation}
where $s(b,j)$ is the signed Stirling number of the first kind, so that $|s(b,j)|$ is the total number of permutations of $[b]$ with $j$ cycles.

We now repeat the statement of Theorem \ref{thmH}.
\begin{proposition}\label{thm12}
\[
\E[\mathcal S_{n,t}^*]=\a(t)\log n+O(1),\quad  \a(t)=\biggl(h_{t-1}-\sum_{t_1+t_2=t}\tfrac{(t_1-1)!(t_2-1)!}{(t-1)!}\biggr)^{-1}.
\]
\end{proposition}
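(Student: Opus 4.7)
The plan is to prove Proposition \ref{thm12} by strong induction on $t$, closely paralleling the proof of Proposition \ref{thm11}. The base case $t=2$ is immediate: $S^*_{n,2}=S_{n,2}$, so Proposition \ref{thm11} gives $\alpha(2)=1/h_1=1$, consistent with the stated closed form (empty Beta-sum for $t=2$). Inductively, assume $\Psi_{k,t'}=\alpha(t')(k-1)_{t'-1}\log k+O(k^{t'-1})$ for every $2\le t'<t$. Isolating the self-referential summand $t_1=t$ in \eqref{4.054} and using $(n-k)_{t_2}/(n-k)=(n-k-1)_{t_2-1}$ decomposes the recursion as
\[
\Psi_{n,t}=(n-1)_{t-1}+\tfrac{1}{h_{n-1}}\sum_{k=2}^{n-1}\tfrac{\Psi_{k,t}}{n-k}+\tfrac{1}{h_{n-1}}\sum_{t_1=2}^{t-1}\sum_{k=2}^{n-1}(n-k-1)_{t-t_1-1}\Psi_{k,t_1},
\]
where the final double sum is now an explicit ``forcing term'' driven entirely by the inductive hypothesis.

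The core asymptotics are computed by expanding $(n-k-1)_{t_2-1}$ as a polynomial in $n-k$ with leading term $(n-k)^{t_2-1}$ and approximating sums by integrals via $k=ny$. Here the Beta function appears naturally through $\int_0^1 y^{t_1-1}(1-y)^{t_2-1}\,dy=B(t_1,t_2)=\Gamma(t_1)\Gamma(t_2)/\Gamma(t)$, yielding
\[
\tfrac{1}{h_{n-1}}\sum_{k=2}^{n-1}(n-k-1)_{t-t_1-1}\Psi_{k,t_1}=\alpha(t_1)\,B(t_1,t-t_1)\,n^{t-1}+O(n^{t-1}/\log n).
\]
For the self-referential term I write $\Psi_{n,t}=\alpha(t)\,n^{t-1}\log n+V_{n,t}$ and invoke the identity from the proof of Proposition \ref{thm11}, $\tfrac{1}{h_{n-1}}\sum_k k^{t-1}\log k/(n-k)=n^{t-1}\log n-h_{t-1}\,n^{t-1}+O(n^{t-1}/\log n)$, which causes the $n^{t-1}\log n$ terms to cancel on both sides automatically. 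Balancing $n^{t-1}$ coefficients then pins $\alpha(t)$ down via the triangular relation
\[
\alpha(t)\,h_{t-1}=1+\sum_{t_1=2}^{t-1}\alpha(t_1)\,B(t_1,t-t_1),
\]
and by unrolling this inductively in $t$ one recognizes the closed-form evaluation $\alpha(t)=(h_{t-1}-\sum_{t_1+t_2=t}B(t_1,t_2))^{-1}$ asserted in the proposition.

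Finally, $V_{n,t}=O(n^{t-1})$ is handled by the bounding-sequence technique culminating at \eqref{4.0531}. With $\alpha(t)$ fixed as above, $|V_{n,t}|$ satisfies
\[
|V_{n,t}|\le c\,n^{t-1}/\log n+\tfrac{1}{h_{n-1}}\sum_{k=2}^{n-1}\tfrac{|V_{k,t}|}{n-k},
\]
which is majorized by $B\,n^{t-1}$ for $B$ large enough, because $\tfrac{1}{h_{n-1}}\sum_k k^{t-1}/(n-k)=n^{t-1}(1-h_{t-1}/h_{n-1})+O(n^{t-1}/\log n)<n^{t-1}$ leaves a spare $h_{t-1}/h_{n-1}$ to absorb the forcing $c\,n^{t-1}/\log n$. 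The main obstacle I anticipate is the algebraic step verifying that the triangular recursion for $\alpha(t)$ collapses to the Beta-sum closed form; unlike the constants encountered in Propositions \ref{thm11} and earlier, this requires a nontrivial identity relating harmonic numbers to inverse-binomial / Beta-value sums, presumably extractable via a generating-function or partial-fraction argument rather than a routine inductive check.
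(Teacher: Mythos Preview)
Your inductive route differs structurally from the paper's: the paper does \emph{not} induct on $t$. It subtracts the \emph{same} trial term $\alpha\,k^{t_1-1}\log k$, with the single target constant $\alpha$, from \emph{every} $\Psi_{k,t_1}$ ($2\le t_1\le t$) appearing in \eqref{4.054}. The resulting ``factor by $\alpha$'' then collects the contributions of all $t_1$ at once, and the balance $1+\alpha\bigl(-h_{t-1}+\sum_{t_1}B(t_1,t_2)\bigr)=0$ delivers the closed form for $\alpha(t)$ directly, with no triangular system to unwind. Your scheme instead feeds the \emph{true} lower-order constants $\alpha(t_1)$ into the forcing, producing the relation $\alpha(t)\,h_{t-1}=1+\sum_{t_1=2}^{t-1}\alpha(t_1)\,B(t_1,t-t_1)$.

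The obstacle you flag is real and, as stated, fatal: this triangular recursion does \emph{not} collapse to the closed form in the Proposition. Both approaches agree that $\alpha(2)=\alpha(3)=1$, but at $t=4$ your recursion gives $\alpha(4)\cdot\tfrac{11}{6}=1+\alpha(2)B(2,2)+\alpha(3)B(3,1)=1+\tfrac{1}{6}+\tfrac{1}{3}=\tfrac{3}{2}$, hence $\alpha(4)=\tfrac{9}{11}$, whereas the stated closed form yields $\bigl(h_3-B(2,2)-B(3,1)\bigr)^{-1}=\bigl(\tfrac{11}{6}-\tfrac{1}{2}\bigr)^{-1}=\tfrac{3}{4}$ (and $1$ under the alternative reading $t_1,t_2\ge 1$). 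No harmonic/Beta identity will reconcile these values. The discrepancy is exactly the difference between inserting $\alpha(t_1)\,k^{t_1-1}\log k$ versus $\alpha(t)\,k^{t_1-1}\log k$ for the lower-$t_1$ terms: the paper's single-$\alpha$ substitution pushes the mismatch $(\alpha(t_1)-\alpha(t))k^{t_1-1}\log k$ into the residual $V_{k,t_1}$ and leaves it for the omitted bounding step to absorb. So the ``nontrivial identity'' you were hoping to extract does not exist; to reach the stated $\alpha(t)$ you would have to mirror the paper's single-$\alpha$ device rather than propagate genuine $\alpha(t_1)$'s through an induction.
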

\begin{proof} The argument is guided by the proof of Proposition \ref{thm11}. Given $\a>0$, define 
\[
V_{\nu,t}=\Psi_{\nu,t}-\a\nu^{t-1}\log \nu,\quad \nu\ge t\ge 2.
\]
By \eqref{4.054}, we have
\begin{multline}\label{4.055}
V_{\nu,t}\!=\!(\nu-1)_{t-1}+\tfrac{1}{h_{n-1}} \sum_{t_1=2}^{t}\sum_{k=2}^{\nu-1}\tfrac{(\nu-k)_{t_2}}{\nu-k}V_{k,t_1}\\
+\a\biggl(\tfrac{1}{h_{\nu-1}}\sum_{t_1=2}^t\sum_{k=2}^{\nu-1}\tfrac{(\nu-k)_{t_2}}{\nu-k}\,k^{t_1-1}\log k -\nu^{t-1}\log \nu\biggr).
\end{multline}
Consider the factor by $\a$. By \eqref{Stir},
\begin{align*}
&\sum_{k=2}^{\nu-1}\tfrac{(\nu-k)_{t_2}}{\nu-k} k^{t_1-1}\log k=\sum_{j=0}^{t_2}s(t_2,j)\Sigma(\nu,t_1,j),\\
&\qquad\qquad\Sigma(\nu,t_1,j):=\sum_{k=2}^{\nu-1}(\nu-k)^{j-1}k^{t_1-1}\log k.
\end{align*}
Recalling that $t_1>1$, we write
\begin{multline*}
\Sigma(\nu,t_1,0)=\sum_{k=2}^{\nu-1}\tfrac{k^{t_1-1}\log k}{\nu-k}=\sum_{k=2}^{\nu-1}\tfrac{k^{t_1-1}\bigl(\log\nu+\log(k/\nu)\bigr)}{\nu-k}
\\
=(\log\nu)\biggl(\nu^{t_1-1}h_{\nu-1}+\sum_{k=2}^{\nu-1}\tfrac{k^{t_1-1}-\nu^{t_1-1}}{\nu-k}\biggr)
+\sum_{k=2}^{\nu-1}\tfrac{k^{t_1-1}\log(k/\nu)}{\nu-k},
\end{multline*}
and
\begin{multline*}
\sum_{k=2}^{\nu-1}\tfrac{k^{t_1-1}-\nu^{t_1-1}}{\nu-k}=\nu^{t_1-1}\biggl(\int_0^1\tfrac{x^{t_1-1}-1}{1-x}\,dx+O(\nu^{-1})\biggr)\\
=\nu^{t_1-1}\biggl(-\int_0^1\sum_{s=0}^{t_1-2}x^s\,dx+O(\nu^{-1})\biggr)\\
=-\nu^{t_1-1}h_{t_1-1}+O(\nu^{t_1-2}),
\end{multline*}
while it is easy to see that $\sum_{k=2}^{\nu-1}\tfrac{k^{t_1-1}\log(k/\nu)}{\nu-k}$ is of order 
$\nu^{t_1-1}\int_0^1\tfrac{x^{t_1-1}\log x}{1-x}\,dx=O(\nu^{t_1-1})$.
Therefore
\begin{equation}\label{>0,0}
\Sigma(\nu,t_1,0)=(h_{\nu-1}-h_{t_1-1})\nu^{t_1-1}\log\nu +O(\nu^{t_1-1}).
\end{equation}
Suppose that $j>0$. Then
\begin{multline}\label{>0,>0}
\Sigma(\nu,t_1,j)=\nu^{t_1+j-1}\biggl(\nu^{-1}\sum_{k=1}^{\nu-1}\bigl(1-k/\nu\bigr)^{j-1}\bigl(k/\nu\bigr)^{t_1-1}\bigl[\log\nu+\log(k/\nu)\bigr]\biggr)\\
=\nu^{t_1+j-1}\biggl[(\log\nu)\int_0^1(1-x)^{j-1}x^{t_1-1}\,dx\\
\qquad+\int_0^1(1-x)^{j-1}x^{t_1-1}(\log x)\,dx+O(\nu^{-1}\log\nu)\biggr]\\
=\tfrac{(j-1)! (t_1-1)!}{(t_1+j-1)!}\cdot\nu^{t_1+j-1}\log\nu +O(\nu^{t_1+j-2}\log\nu),
\end{multline}
and $t_1+j-1\le t_1+t_2-1=t-1$. Combining \eqref{>0,0} and \eqref{>0,>0}, and using $s(b,b)=1$, $s(b,0)=0$ for $b>0$, we have
\begin{multline*}
\sum_{k=2}^{\nu-1}\tfrac{(\nu-k)_{t_2}}{\nu-k} k^{t_1-1}\log k\\
=(h_{\nu-1}-h_{t_1-1})\nu^{t_1-1}\log\nu +\tfrac{(t_2-1)!(t_1-1)!}{(t-1)!}\nu^{t-1}\log \nu+O(\nu^{t-2}\log\nu).\\
\end{multline*}
So, the factor by $\a$ in \eqref{4.055} is 
\begin{multline*}
\tfrac{\nu^{t-1}\log\nu}{h_{\nu-1}}\biggl(h_{\nu-1}-h_{t-1}+\sum_{t_1=1}^t\tfrac{(t_2-1)!(t_1-1)!}{(t-1)!}+O(\nu^{-1})\biggr)-\nu^{t-1}\log\nu\\
=\tfrac{\nu^{t-1}\log\nu}{h_{\nu-1}}\biggl(-h_{t-1}+\sum_{t_1=1}^t\tfrac{(t_2-1)!(t_1-1)!}{(t-1)!}+O(\nu^{-1})\biggr).
\end{multline*}
Consequently the equation \eqref{4.055} becomes
\begin{multline*}
V_{\nu,t}\!=\!(\nu-1)_{t-1}+\a\tfrac{\nu^{t-1}\log\nu}{h_{\nu-1}}\biggl(-h_{t-1}+\sum_{t_1=1}^t\tfrac{(t_2-1)!(t_1-1)!}{(t-1)!}+O(\nu^{-1})\biggr)\\
+\tfrac{1}{h_{n-1}} \sum_{t_1=2}^{t}\sum_{k=2}^{\nu-1}\tfrac{(\nu-k)_{t_2}}{\nu-k}V_{k,t_1}\\
=O\bigl(\nu^{t-1}\log^{-1}\nu\bigr)+\tfrac{1}{h_{n-1}} \sum_{t_1=2}^{t}\sum_{k=2}^{\nu-1}\tfrac{(\nu-k)_{t_2}}{\nu-k}V_{k,t_1},
\end{multline*}
if we select
\[
\a=\biggl(h_{t-1}-\sum_{t_1+t_2=t}\tfrac{(t_1-1)!(t_2-1)!}{(t-1)!}\biggr)^{-1}.
\]
We omit the rest of the proof since it runs just like the final part of the proof of Proposition \ref{thm11}. 
\end{proof}

\subsection{Counting the subtrees by the number of their leaves.} 
\label{sec:countingsubtrees} Since the tree with $n$ leaves has $2n-1$ vertices, there are exactly $2n-1$ subtrees, with the number of leaves ranging, with possible gaps, from $1$ to $n$. Let $X_n(t)$ be the number of subtrees with $t$ leaves; so $X_n(1)=n$, $X_n(n)=1$, and $X_n(t)=0$ for $t>n$. Now, $\sum_{t\ge 1}X_n(t)=2n-1$, so $\{u_n(t)\}_{t\ge 1}:=\bigl\{\tfrac{E[X_n(t)]}{2n-1}\bigl\}_{t\ge 1}$ is the probability distribution of the number of leaves in the uniformly random subtree, i.e. the subtree rooted at the uniformly random vertex of the whole tree. Furthermore
\begin{equation}\label{sub1} 
E[X_n(t)]=\tfrac{n}{2h_{n-1}}\sum_{j=1}^{n-1}\tfrac{E[X_j(t)]+E[X_{n-j}(t)]}{j(n-j)}=\tfrac{n}{h_{n-1}}\sum_{j=1}^{n-1}\tfrac{E[X_j(t)]}{j(n-j)}.
\end{equation}
So, with $\xi_n(t):=\tfrac{E[X_n(t)]}{n}$, and $h_k:=\sum_{j=1}^k\tfrac{1}{j}$, we have
\begin{equation}\label{sub1.01}
 \xi_n(t)=\tfrac{1}{h_{n-1}}\sum_{j=t}^{n-1}\tfrac{\xi_j(t)}{n-j},\quad n\ge t+1,\,\,\bigl(\xi_t(t)=\tfrac{1}{t}\bigr).
\end{equation}
and clearly  $u_n(t)=\tfrac{\xi_n(t)}{2-n^{-1}}$.

\begin{theorem}\label{thmt>2} {\bf (i)\/} $\xi_n(t)\in\bigl[\tfrac{1}{t^2},\tfrac{1}{th_t}\bigr],\,\,\tfrac{1}{t}\le \sum_{\tau\ge t}\xi_n(\tau)\le \tfrac{2}{t}$, the last bound implying that the sequence of
distributions $\{u_n(t)\}_{t\ge 1}$ is tight. \\
{\bf (ii)\/} For $q\in (0,1)$, $F_n(q):=\sum_{t\ge 1}q^t\xi_n(t)$ decreases with $n$.  Consequently, the sequence of
distributions $\{u_n(t)\}_{t\ge 1}$ converges to a proper distribution $\{u(t)\}_{t\ge 1}$. \\
{\bf (iii)\/} However,
the expected size of the uniformly random subtree is asymptotic to $\tfrac{3}{2\pi^2}\log^2 n$.
\end{theorem}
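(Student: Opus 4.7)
The three parts are treated largely independently, with part (iii) following almost immediately from the edge-height asymptotics already proved in Proposition \ref{2new}. The key is the deterministic identity
\[
\sum_{t\ge 1} t\,X_n(t) = \sum_{v} |T_v| = \sum_{\ell\text{ leaf}} \#\{\text{ancestors of }\ell\} = n + \sum_{\ell} L_n^{(\ell)},
\]
obtained by exchanging the order of summation in counting (vertex, leaf-descendant) pairs and identifying the number of ancestors of $\ell$ (including $\ell$ itself) with $L_n^{(\ell)}+1$. Taking expectations and dividing by $n$ gives $\sum_{t\ge 1} t\,\xi_n(t) = 1 + \E[L_n]$, and Proposition \ref{2new} supplies $\E[L_n] \sim \tfrac{1}{2\zeta(2)}\log^2 n = \tfrac{3}{\pi^2}\log^2 n$. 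Dividing by $2 - n^{-1}\to 2$ to pass from $\xi_n$ to $u_n$ yields the claimed $\sum_{t\ge 1} t\,u_n(t) \sim \tfrac{3}{2\pi^2}\log^2 n$.

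For part (i), I would prove the pointwise bounds on $\xi_n(t)$ by induction on $n\ge t$ from the recurrence \eqref{sub1.01}, with initial value $\xi_t(t)=1/t$; the inductive step writes $\xi_n(t)$ as a weighted average and uses monotonicity of the bound under such averages. For the tail, summing \eqref{sub1.01} over $\tau\ge t$ and interchanging summations gives the self-contained recurrence
\[
\Sigma_n(t) := \sum_{\tau\ge t}\xi_n(\tau) = \tfrac{1}{n} + \tfrac{1}{h_{n-1}} \sum_{j=t}^{n-1}\tfrac{\Sigma_j(t)}{n-j},\quad \Sigma_t(t)=\tfrac{1}{t}.
\]
The case $t=2$ admits the explicit closed form $\Sigma_n(2) = (n-1)/n$, verifiable via partial fractions as the unique solution of the recurrence; this saturates the bound $2/t$ in the limit and suggests the correct ansatz for the general induction. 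Tightness of $\{u_n\}$ is then immediate from $\sum_{\tau\ge t} u_n(\tau) = \Sigma_n(t)/(2-n^{-1}) \le 2/t$ uniformly in $n$.

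For part (ii), I would combine the tightness from (i) with pointwise convergence of $\xi_n(t)$ for each fixed $t$. The recurrence \eqref{sub1.01} expresses $\xi_n(t)$ as a weighted average of $\xi_j(t)$ for $j\in[t,n-1]$ against weights forming a probability distribution; standard contraction-type estimates then reduce convergence to controlling the oscillation of the inputs. Once pointwise convergence and tightness are in hand, weak convergence of $u_n$ to a proper probability distribution $u$ follows by Helly selection plus uniqueness of subsequential limits, or alternatively by monotonicity of the generating functions $F_n(q)=\sum_t q^t\xi_n(t)$, which satisfy a clean recurrence derived from \eqref{sub1.01}.

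The main obstacle is the upper tail bound $\Sigma_n(t)\le 2/t$. A naive induction with this constant does not close, because the weighted-average step only yields $\Sigma_n(t) \le \tfrac{1}{n} + \tfrac{2 h_{n-t}}{t h_{n-1}}$, and $h_{n-t}/h_{n-1}\to 1$ leaves the residual $1/n$ unabsorbed. The closed form $\Sigma_n(2) = 2/2 - 1/n$ identifies the correct sharper ansatz $\Sigma_n(t) \le 2/t - c(t)/n$; under this stronger inductive hypothesis the $1/n$ term is precisely compensated, and a direct computation should close the induction for every $t\ge 2$ with $c(t)$ chosen along the lines of the $t=2$ template.
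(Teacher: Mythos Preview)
Your part (iii) is correct and identical to the paper's: both use the double-counting identity $\sum_t t\,X_n(t)=\sum_{\ell}(L_n^{(\ell)}+1)$ and then invoke Proposition~\ref{2new}.

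For part (ii), the paper goes directly via the generating-function route you mention only as an alternative: it derives the recurrence $F_n(q)=\tfrac{1}{h_{n-1}}\sum_{j=1}^{n-1}\tfrac{F_j(q)}{n-j}$, shows $F_{n+1}(q)\le F_n(q)$ by a one-line induction (shift the index, use $F_n(q)\ge q\xi_n(1)=q$, and $h_n-h_{n-1}=1/n$), and then combines monotone convergence of $F_n(q)$ with tightness from (i) and analyticity/uniqueness to conclude. Your primary route via ``contraction-type estimates'' on $\xi_n(t)$ is not yet a proof: the weights $(h_{n-1}(n-j))^{-1}$ on $j\in[t,n-1]$ sum to $h_{n-t}/h_{n-1}<1$, so the recurrence is neither a true average nor an evident contraction, and pointwise convergence of $\xi_n(t)$ does not drop out.

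For part (i) you are actually \emph{more} careful than the paper. Your recurrence $\Sigma_n(t)=\tfrac{1}{n}+\tfrac{1}{h_{n-1}}\sum_{j=t}^{n-1}\tfrac{\Sigma_j(t)}{n-j}$ is correct (the whole tree contributes the $\tfrac{1}{n}$); the paper's displayed recurrence for $\eta_n(t)$ omits this term, and with it omitted the naive induction $\eta_{n+1}(t)\le \tfrac{2h_{n+1-t}}{th_n}\le\tfrac{2}{t}$ closes in one line. With the $\tfrac{1}{n}$ present, you are right that this fails. However, your proposed repair $\Sigma_n(t)\le \tfrac{2}{t}-\tfrac{c(t)}{n}$ does \emph{not} close for any constant $c(t)$ once $t\ge 3$: the base case $\Sigma_t(t)=\tfrac{1}{t}$ forces $c(t)\le 1$, while the inductive step for large $n$ reduces (after the partial-fraction computation you outline) to $t\,h_{t-1}\le (2n+2-t)(h_n-h_{n+1-t})\to 2(t-1)$, which already fails at $t=3$ since $3h_2=\tfrac{9}{2}>4$. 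The $t=2$ identity $\Sigma_n(2)=(n-1)/n$ is a degenerate case where equality holds throughout. So the upper tail bound $\Sigma_n(t)\le 2/t$ needs a genuinely different strengthening (e.g.\ a correction that grows with $n$, or an argument via the generating function rather than termwise), and this remains the real gap in your proposal.
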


\noindent 
We conjecture that (ii) can be improved to the stronger assertion that $\xi_n(t)$ is decreasing with $n$, for each $t$.
We are grateful to Huseyin Acan \cite{Aca} 
for numerically verifying this for $n$ and $t$ below $1000$. 

\begin{proof} 
{\bf (i)\/} Let us show that $\xi_n(t)\ge \tfrac{1}{t^2}$ for $n\ge t>1$. By \eqref{sub1}, we have $\xi_t(t)=\tfrac{1}{t}$ and $\xi_{t+1}(t)=\tfrac{1}{th_t}$, both above $\tfrac{1}{t^2}$. Suppose that $n\ge t+1$ is such that $\xi_j(t)\ge \tfrac{1}{t^2}$ for all $j\in [t,n]$. This is true for $n=t+1$. For $n> t+1$, 
\begin{multline*}
\xi_n(t)\ge\tfrac{\xi_t(t)}{h_{n-1}(n-t)}+\tfrac{1}{t^2h_{n-1}}\sum_{j=t+1}^{n-1}\tfrac{a}{n-j}
= \tfrac{1}{h_{n-1}(n-t)t}+\tfrac{h_{n-1-t}}{t^2h_{n-1}}\\
=\tfrac{1}{t^2}+ \tfrac{1}{h_{n-1}(n-t)t}+\tfrac{h_{n-1-t}-h_{n-1}}{t^2 h_{n-1}}\\
\ge \tfrac{1}{t^2}+\tfrac{1}{h_{n-1}(n-t)t}-\tfrac{1}{t^2 h_{n-1}}\cdot \tfrac{t}{n-t} =\tfrac{1}{t^2},
\end{multline*}
which completes the induction step. The proof of $\xi_n(t)\le\tfrac{1}{th_t}$ is similarly reduced to showing that 
$\tfrac{(n-1)h_t}{(n-t)th_{n-1}}\le 1$ for $n>t+1$.
This is so, as the fraction is at most $\tfrac{h_t}{h_{t+1}}\cdot\tfrac{t+1}{2t}$. 

Let us prove that $\tfrac{1}{t}\le \sum_{\tau\ge t}\xi_n(\tau)\le \tfrac{2}{t}$. Introduce
$Y_n(t)=\sum_{\tau\ge t}X_n(\tau)$, the total number of subtrees with at least $t$ leaves, and $\eta_n(t):=\tfrac{\E[Y_n(t)]}{n}=\sum_{\tau\ge t}\xi_n(\tau)$; so $\eta_n(1)=\tfrac{2n-1}{n}$, and $\eta_n(n)=\tfrac{1}{n}$.  Analogously to \eqref{sub1}, we have
\[
\eta_n(t)=\tfrac{1}{h_{n-1}}\sum_{j=t}^{n-1}\tfrac{\eta_j(t)}{n-j},\quad n\ge t+1.
\]
We need to show that  $\eta_n(t)\le\tfrac{2}{t}$ for all $n\ge t$. It suffices to consider $n>t>1$. Suppose that for some $n\ge t$ and all 
$j\in [t,n]$ we have $\eta_j(t)\le \tfrac{2}{t}$. This is definitely true for $n=t$. Then
\begin{multline*}
\eta_{n+1}(t)=\tfrac{1}{h_n}\sum_{j=t}^n\tfrac{\eta_j(t)}{n+1-j}\le\tfrac{2}{th_n}\sum_{j=t}^n\tfrac{1}{n+1-j}
=\tfrac{2h_{n+1-t}}{th_n}\le\tfrac{2}{t},\\
\end{multline*}
which competes the inductive proof of $\eta_n(t)\le \tfrac{2}{t}$. \\
 {\bf (ii)\/} For $n\ge 2$, we have
\begin{multline*}
F_n(q)=\sum_{t\ge 1}q^t\xi_n(t)=\sum_{t\ge 1}\tfrac{q^t}{h_{n-1}}\sum_{j=t}^{n-1}\tfrac{\xi_j(t)}{n-j}\\
=\tfrac{1}{h_{n-1}}
\sum_{j=1}^{n-1}\tfrac{1}{n-j}\sum_{t=1}^jq^t\xi_j(t)=\tfrac{1}{h_{n-1}}\sum_{j=1}^{n-1}\tfrac{F_j(q)}{n-j}.\\
\end{multline*}
Therefore
\begin{multline*}
F_{n+1}(q)=\tfrac{1}{h_n}\sum_{j=1}^n\tfrac{F_j(q)}{n+1-j}=\tfrac{1}{h_n}\biggl(\tfrac{F_1(q)}{n}+\sum_{j=2}^n\tfrac{F_j(q)}{n+1-j}\biggr)\\
\le \tfrac{1}{h_n}\biggl(\tfrac{F_1(q)}{n}+\sum_{j=1}^{n-1}\tfrac{F_j(q)}{n-j}\biggr)
\le\tfrac{1}{h_n}\bigl(\tfrac{q}{n}+h_{n-1}F_n(q)\bigr)\le F_n(q),
\end{multline*}
since $F_n(q)\ge q\xi_n(t)=q$, and $h_n-h_{n-1}=\tfrac{1}{n}$. 
Therefore, for each $q\in (0,1)$ there exists $F(q)=\lim_{n\to\infty}\sum_{t\ge1 }q^t\xi_n(t)$, implying existence of
 $\lim_{n\to\infty}\sum_{t\ge 1}q^t u_n(t)=2F(q)$.
For any weakly convergent subsequence of the distributions $\{u_{n_{i}}(t)\}_{t\ge 1}$, for each $x$ in the unit disc  there exists 
$\lim_{n_i\to\infty}\sum_{t\ge 1}x^t u_{n_{i}}(t)$, dependent on the subsequence, which is analytic within the disc. All these limits coincide for $x\in (0,1)$, whence they coincide for all $x$ within the disc, whence on the whole disc. Since the characteristic function determines the distribution uniquely, we see that the whole sequence of the distributions converges 
to a proper distribution.\\
{\bf (iii)\/} $Z_n:=\sum_{t\ge 1}tX_n(t)$ is the total number of the leaves, each leaf counted as many times as the
number of the subtrees rooted at the vertices along the path from the root to the leaf, which is distributed as $1$ plus
$L_n$, the edge-length of the path to the random leaf. 
Therefore $\tfrac{\E[Z_n]}{2n-1}=\tfrac{n}{2n-1}\bigl(1+\E[L_n]\bigr)$, and it remains to use Proposition \ref{2new}.
\end{proof}

\section{Other methods}
\label{sec:3}
Our results here demonstrate that the analysis of recursions method is very effective at deriving sharp asymptotics for the questions 
addressed here.
However, there are many other aspects of the model that could be studied, 
and a wide variety of familiar general modern probabilistic techniques that could be applied. 
We indicate such possibilities briefly below -- see the preprint  \cite{Ald2} for a more comprehensive account.

 It is intuitively clear that 
for our tree model (call it $\TT_n$, say) there should be two $n \to \infty$ limit structures:\\
(a) A scaling limit process, which is a fragmentation of the unit interval via some sigma-finite splitting measure.\\
(b) A fringe process, which is the local weak limit relative to a random leaf, describable as some 
marked branching process. 
This starts with an explicit description of the limit distribution $\{u(t)\}_{t\ge 1}$ in Theorem \ref{thmt>2}. \\
Less intuitive is a piece of structure theory.
In our discrete-time model there is no simple connection between $\TT_n$ and $\TT_{n+1}$.
In the continuous-time model with our chosen rates $h_{n-1}$ 
(and this is the reason for that particular choice),  \cite{Ald2} shows there is a non-obvious consistency property under a ``delete a random leaf and prune" operation. 
This enables  an inductive construction of a process $(\TT_n, n = 2,3,4, \ldots)$, which in turn suggests the possibility of a.s. limit theorems.

Readers may notice that the issues above are somewhat analogous to those arising in the theory
surrounding the Brownian continuum random tree (CRT) as a limit of certain other random tree models
\cite{crt2,evans}. 
However, in contrast to the CRT setting, the two limit processes above would not capture the asymptotics of the quantities studied in this article.

{\bf Acknowledgment.\/} We thank Svante Janson for catching an error in an earlier version. We are grateful to Huseyin Acan for the numerics below Theorem \ref{thmt>2}. 
We thank the referees for the time and effort to repeatedly read the paper and to provide expert critical advice. We thank the Editors for the efficient reviewing process.

\end{document}